\documentclass[12pt]{amsart}



\usepackage{amssymb}
\usepackage{mathtools}
\usepackage{hyperref}

\usepackage{enumitem}

\usepackage{graphicx}

\makeatletter
\@namedef{subjclassname@2020}{%
  \textup{2020} Mathematics Subject Classification}
\makeatother

\usepackage[T1]{fontenc}

\let\strokel\l
\renewcommand{\l}{{\ell}}
\renewcommand{\L}{{\lambda}}
\renewcommand{\Re}{{\operatorname{Re}}}
\renewcommand{\Im}{{\operatorname{Im}}}
\renewcommand{\a}{\alpha}

\DeclarePairedDelimiter\floor{\lfloor}{\rfloor}


\newtheorem{thm}{Theorem}[section]

\newtheorem{lem}[thm]{Lemma}

\newtheorem{conjec}[thm]{Conjecture}


\theoremstyle{definition}

\newtheorem{remark}[thm]{Remark}



\numberwithin{equation}{section}


\frenchspacing

\textwidth=13.5cm
\textheight=23cm
\parindent=16pt
\oddsidemargin=-0.5cm
\evensidemargin=-0.5cm
\topmargin=-0.5cm





\begin{document}


\baselineskip=17pt


\title[First moment]{The first moment of quadratic twists of modular $L$-functions}

\author[Q. Shen]{Quanli Shen}
\address{SDU-ANU Joint Science College\\ Shandong University\\
 Weihai 264209, China}
\email{qlshen@outlook.com}

\date{}

\begin{abstract}
We obtain an  asymptotic formula with an error term $O(X^{\frac{1}{2} + \varepsilon})$ for the smoothed first moment of quadratic twists of modular $L$-functions.  We also give a similar result for the smoothed first moment of the first derivative of quadratic twists of modular $L$-functions. The argument is largely based on  Young's works (Acta Arith 138(1):73--99, 2009 and  Selecta Math 19(2):509--543, 2013).
\end{abstract}

\subjclass[2020]{Primary 11M06; Secondary 11F67}

\keywords{moments of $L$-functions, modular $L$-functions, recursive method}

\maketitle

\section{Introduction}
The study of moments of $L$-functions is of much  interest to  researchers in number theory due to its fruitful applications.
One example is that Bump-Friedberg-Hoffstein \cite{Bump-F-H} and Murty-Murty \cite{Murty-Murty} independently proved $L'(\frac{1}{2},E \otimes \chi_d) \neq 0$ for infinitely many  fundamental discriminants $d$ with $d<0$, where $E$ is a modular elliptic curve with root number $1$ over $\mathbb{Q}$ and $\chi_d (\cdot) := \left(\frac{d}{\cdot} \right)$ denotes the Kronecker symbol. The method of their works is to investigate moments of the  derivative of quadratic twists of modular $L$-functions. Their celebrated results successfully verify the assumption in Kolyvagin's 
theorem \cite{Kolyvagin}  on the Birch-Swinnerton-Dyer conjecture, where it was proven that if the  Hasse-Weil $L$-function $L(s,E)$ does not vanish at the central point $s =\frac{1}{2}$, then the group of rational points of $E$ is finite, \textit{provided} that there exists a quadratic character $\chi_d$ with $d<0$ such that $L(s,E \otimes \chi_d)$ has a simple zero at the central point and such that $\chi_d(p) = 1 $ for every $p $ that divides the conductor of $E$.

In particular,   Murty-Murty \cite{Murty-Murty} proved an asymptotic formula for the first moment of the derivative of quadratic twists of modular $L$-functions with an error term $O(X (\log X)^{1-\rho})$, where $\rho$ is an explicit positive real number. It was later  improved by Iwaniec \cite{Iwaniec} to a power savings $O(X^{\frac{13}{14} + \varepsilon})$ for a smoothed version.  In \cite{Bump-F-H-01} Bump-Friedberg-Hoffstein
claimed the error term $O(X^{\frac{3}{5} + \varepsilon})$ without proof.
Note that  in \cite{Iwaniec,Murty-Murty} they  considered quadratic twists of elliptic curve $L$-functions,  but it is no doubt that the methods there will extend to all modular newforms. The goal of this paper is to obtain an error term of the size $O(X^{\frac{1}{2}+\varepsilon})$ for a smoothed version. The improvement is mainly due to a recursive method developed by Heath-Brown  \cite{HB01} and Young \cite{Young01, Young}. The argument of this paper also allows us to obtain an error term of the same size $O(X^{\frac{1}{2} + \varepsilon})$ for the first moment of quadratic twists of modular $L$-functions, which improves the  error term  $O(X^{\frac{13}{14} + \varepsilon})$ of Stefanicki \cite[Theorem 3]{Stefanicki} and Luo-Ramakrishnan \cite[Proposition 3.6]{Luo-Ramakrishnan} and $O(X^{\frac{7}{8} + \varepsilon})$ of {Radziwi\strokel\strokel}-Soundararajan \cite[Proposition 2]{Sound-Radziwill}. Also, with slightly more effort, one can obtain  similar results for the first moment of higher derivatives of twisted modular $L$-functions.

To precisely state our result, we shall introduce some notation. Let $f$ be a modular form of weight $\kappa $ for the full modular group $SL_2 (\mathbb{Z})$. (Our argument  may extend to congruent subgroups.) We assume $f$ is an eigenfunction of all Hecke operators. The Fourier expansion of $f$ at infinity is 
\[
f(z) = \sum_{n=1}^{\infty} \L_f (n) n^{\frac{\kappa -1}{2}} e(nz),
\]
where $\L_f (1) =1$ and $|\L_f(n)| \leq \tau(n)$ for $n \geq 1$. Here $e(z) := e^{2 \pi i z}$, and $\tau(n)$ is the number of divisors of $n$. The twisted  modular $L$-function is defined  by 
\begin{align*}
L(s, f \otimes \chi_d) &:= \sum_{n=1}^{\infty} \frac{\L_f(n)\chi_d(n)}{n^s}
 = \prod_{p\nmid d} \left(1 - \frac{\L_f (p) \chi_d(p)}{p^s}  + \frac{1}{p^{2s}}\right)^{-1}
\end{align*} 
for $\Re(s) > 1$, 
and it extends to the entire complex plane. The completed $L$-function is defined by 
\begin{align*}
\Lambda (s, f \otimes \chi_d) := \left(\frac{|d|}{2\pi} \right)^s \Gamma (s + \tfrac{\kappa -1}{2}) L(s, f \otimes \chi_d). 
\end{align*}
It  satisfies the functional equation 
\begin{align}
\Lambda (s, f \otimes \chi_d) = i^\kappa \epsilon(d ) \Lambda (1- s, f \otimes \chi_d),
\label{equ:FE}
\end{align}
where $\epsilon(d) =1$ if $d$ is positive, and  $\epsilon(d) =-1$ if $d$ is negative. In this paper, we  consider the case $d>0$, so $\epsilon =1$. The case $d<0$ can be done similarly.
We prove the following assertions.
\begin{thm}\label{main-thm}
Let  $\kappa \equiv 0\, (\operatorname{mod}4)$ and $\kappa \neq 0$. Let $\Phi(x): (0,\infty) \rightarrow \mathbb{R}$ be a smooth, compactly supported function. We have 
\begin{align*}
\sideset{}{^*}\sum_{(d,2)=1} L(\tfrac{1}{2}, f \otimes \chi_{8d}) \Phi(\tfrac{d}{X})=  \frac{8 \tilde{\Phi}(1)}{\pi^2} L(1, \operatorname{sym}^2 f)  Z^* (0)  X + O(X^{\frac{1}{2} + \varepsilon}).
\end{align*}
Here  $\sideset{}{^*}\sum$ denotes the summation over square-free integers, $Z^*$ is defined in  \eqref{equ:LZ} and \eqref{def-Zstar}, and $\tilde{\Phi}$ is the Mellin transform of $\Phi$ defined by 
\[
\tilde{\Phi} (s) := \int_0^\infty \Phi(x) x^{s-1} dx.
\]

\end{thm}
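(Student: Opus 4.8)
The plan is to adapt Young's treatment of the first moment of quadratic Dirichlet $L$-functions \cite{Young01} to the $\mathrm{GL}_2$ setting. Note first that $\kappa \equiv 0 \pmod 4$ forces $i^\kappa = 1$, and $d > 0$ gives $\epsilon(8d) = 1$, so the functional equation \eqref{equ:FE} has root number $+1$ and the central value is not forced to vanish. I would open $L(\tfrac12, f\otimes\chi_{8d})$ by the approximate functional equation: for a smooth weight $V$ built from the archimedean factor $\Gamma(s + \tfrac{\kappa-1}{2})$ via an inverse Mellin transform,
\[
L(\tfrac12, f\otimes\chi_{8d}) = 2\sum_{n=1}^{\infty}\frac{\L_f(n)\chi_{8d}(n)}{\sqrt n}\, V\!\left(\frac{n}{8d}\right),
\]
with $V(\xi) = 1 + O(\xi^{A})$ for small $\xi$ and of rapid decay for $\xi \gg X^{\varepsilon}$, so that the $n$-sum is effectively truncated at $n \ll X^{1+\varepsilon}$. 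Inserting this and interchanging the order of summation reduces the theorem to analyzing $2\sum_{n}\frac{\L_f(n)}{\sqrt n}\,S(n)$, where $S(n) := \sideset{}{^*}\sum_{(d,2)=1}\chi_{8d}(n)\, V(n/8d)\,\Phi(d/X)$.

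The next step is Poisson summation in the $d$-aspect. Detecting the square-free condition with $\mu$, removing the condition $(d,2)=1$, and using quadratic reciprocity to rewrite $\chi_{8d}(n)$ in terms of the Jacobi symbol $\left(\tfrac{\,\cdot\,}{n}\right)$ evaluated at $d$ (separating the classes $n \equiv 1$ and $n \equiv 3 \pmod 4$), I would apply Poisson summation to the sum over $d$ in residue classes modulo a divisor of $8n$. Since $\Phi$ is smooth and compactly supported, the dual variable $k$ is confined to $|k| \ll n^{1+\varepsilon}/X$; in particular only $k = 0$ survives once $n \le X^{1-\varepsilon}$, and the $k = 0$ term carries the character sum $\sum_{a}\left(\tfrac{a}{n}\right)$, which vanishes unless $n$ is a perfect square. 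Thus $S(n)$ splits into a diagonal piece supported on $n = m^2$ and a dual sum, supported on $X^{1-\varepsilon} < n < X^{1+\varepsilon}$ and weighted by quadratic Gauss sums $G_k(n)$ with $k \neq 0$.

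For the diagonal, I would collect the terms $n = m^2$: evaluating the inner sum over odd square-free $d$ coprime to $m$ (a smooth count of density $\tfrac{4}{\pi^2}$ times a local correction at the primes dividing $m$) and summing over $m$ via the Hecke relation $\sum_m \L_f(m^2)m^{-s} = L(s,\operatorname{sym}^2 f)/\zeta(2s)$, a Mellin/contour argument picking out the residue at the origin produces the main term $\frac{8\tilde\Phi(1)}{\pi^2}L(1,\operatorname{sym}^2 f)Z^*(0)X$; here $8/\pi^2 = 2\cdot\tfrac{4}{\pi^2}$ combines the factor $2$ from the approximate functional equation with the density of odd square-free integers, and $Z^*(0)$ is the resulting arithmetic (Euler-product) factor, matching the definition of $Z^*$. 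The error incurred is $O(X^{1/2+\varepsilon})$, coming from the square-free sieve; the contribution of the $m$-aspect beyond the residue is negligible.

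The heart of the matter, and the step I expect to be the main obstacle, is estimating the dual sum $\sum_{X^{1-\varepsilon}<n<X^{1+\varepsilon}} \frac{\L_f(n)}{\sqrt n}\sum_{1\le|k|\ll n^{1+\varepsilon}/X} G_k(n)\,\widehat{W}(k;n)$ (with $\widehat{W}(k;n)$ a smooth transform of $V\Phi$), for which a trivial bound gives only $O(X^{1+\varepsilon})$: one must exploit cancellation in the $n$-sum. Writing $n = n_1 n_2^2$ with $n_1$ square-free and evaluating $G_k(n)$, the sum over $n$ is governed --- up to Euler factors at $2$, at primes dividing $2k n_2$, and over the square part --- by the Dirichlet series $\sum_{n_1\text{ sq-free}}\L_f(n_1)\chi_{k'}(n_1)n_1^{-s}$ for an associated fundamental discriminant $k'$; this equals $L(s, f\otimes\chi_{k'})$ times an Euler product absolutely convergent in $\Re s > \tfrac12$, and $L(s, f\otimes\chi_{k'})$ is entire. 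Representing the $n$-sum as a contour integral against this object and shifting the contour to $\Re s = \tfrac12 + \varepsilon$ crosses no pole --- so no secondary main term appears --- and the shifted integral is $O(X^{1/2+\varepsilon})$ for each of the $O(X^\varepsilon)$ admissible $k$, by the convexity bound for $L(\tfrac12+\varepsilon+it, f\otimes\chi_{k'})$ together with $|\L_f(n)| \le \tau(n)$ and $|G_k(n)| \ll n^{1/2+\varepsilon}$; the terms with $n_2 > 1$ contribute even less. (The dual sum has the ``recursive'' shape of a shorter average of twisted $L$-values, so one could instead iterate as in \cite{Young01}, but a single contour shift suffices here.) Combining the three contributions yields the theorem; the companion statement for $L'(\tfrac12, f\otimes\chi_{8d})$ is obtained identically after differentiating the approximate functional equation, which only inserts bounded powers of $\log(n/X)$ into the weights.
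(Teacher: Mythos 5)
The fatal problem is in your treatment of the dual (off-diagonal) sum, and it stems from dropping the M\"obius parameter that detects the square-free condition. After writing $d=a^2d'$ with $\mu(a)$, the Poisson summation is applied to a $d'$-sum of length $X/a^2$, so the oscillatory factor is $e(kxX/(2na^2))$-type and the dual variable is only negligible for $|k|\gg na^2X^{\varepsilon-1}$. Your claims that ``only $k=0$ survives once $n\le X^{1-\varepsilon}$'' and that there are ``$O(X^{\varepsilon})$ admissible $k$'' are true only for $a\ll X^{\varepsilon}$; for $a$ up to $\sqrt{X}$ (which the sieve forces you to consider) the number of relevant pairs $(a,k)$ is roughly $\sum_a na^2/X$, which is far larger. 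If you run your contour-shift bound uniformly, each fixed $a$ contributes about $X^{1/2+\varepsilon}a^{\varepsilon}\cdot$(something growing like $a^2/a^2$ per $k$ summed), and even with the sharper average bound over the discriminants $k_1$ (the Heath-Brown-type estimate, Lemma \ref{HB-bd}, rather than convexity for each $k$) the off-diagonal contribution restricted to $a\le Y$ is only $O(X^{1/2+\varepsilon}Y)$ --- this is exactly the paper's Lemma \ref{lem:non-diagnol} --- and letting $Y$ run up to $\sqrt{X}$ gives $O(X^{1+\varepsilon})$, i.e.\ no saving at all. This is precisely why a single Poisson-plus-contour-shift argument historically yields exponents like $\frac{7}{8}$ or $\frac{13}{14}$, and why the paper does something structurally different: it splits the M\"obius sum at $a\le Y$ versus $a>Y$, treats only $a\le Y$ by Poisson, converts the tail $a>Y$ back into a genuine square-free (shifted, twisted) moment $M(\alpha,\ell)$, and invokes an induction hypothesis (Conjecture \ref{conj} with exponent $h$), so that Theorem \ref{thm-assump} with the optimal $Y=X^{(2h-1)/(4h)}$ improves $h$ to $\frac{4h-1}{4h}$ and iteration drives the exponent down to $\frac12$. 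Your proposal has no substitute for this recursion, and the parenthetical remark that ``a single contour shift suffices here'' is exactly the step that fails.

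A smaller, fixable gap: in the diagonal term the Dirichlet series over $n=m^2$ carries a factor $L(1+2s,\operatorname{sym}^2 f)/\zeta(2+4s)$ (plus local factors), and $1/\zeta(2+4s)$ has poles at points with $\Re(s)\in(-\tfrac12,-\tfrac14)$ coming from nontrivial zeros of $\zeta$; a na\"ive shift to $\Re(s)=-\tfrac12+\varepsilon$ is therefore not legitimate and by itself would only give $O(X^{3/4+\varepsilon})$. The paper circumvents this by building zeta factors into the weight $G(s)$ of the approximate functional equation (Remark \ref{rem:G}) together with the rewriting of $Z$ in Lemma \ref{lem:lastlem}; you would need the same (or an equivalent) device. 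Finally, note that the derivative statement (Theorem \ref{main-thm1}) is not obtained by ``differentiating the approximate functional equation'' term by term but by differentiating the shifted asymptotic in $\alpha$ and controlling the error via Cauchy's integral formula, which requires the shifted formula of Conjecture \ref{conj} in the first place --- another reason the shifted, twisted moment $M(\alpha,\ell)$, absent from your plan, is essential to the argument.
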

\begin{thm}\label{main-thm1}
Let  $\kappa \equiv 2\, (\operatorname{mod}4)$. Let $\Phi(x): (0,\infty) \rightarrow \mathbb{R}$ be a smooth, compactly supported function. We have 
\begin{align*}
&\sideset{}{^*}\sum_{(d,2)=1} L'(\tfrac{1}{2}, f \otimes \chi_{8d}) \Phi(\tfrac{d}{X})\\
&=  \frac{8 \tilde{\Phi}(1)}{\pi^2} L(1, \operatorname{sym}^2 f)  Z ^ *(0)  X  
\Bigg[ 
\log X
 + 2 \frac{L'(1, \operatorname{sym}^2 f) }{L(1, \operatorname{sym}^2 f) }
+ \frac{{Z^*}' (0)}{Z^* (0)}\\
&  + \log \frac{8}{2\pi}
+ \frac{\Gamma'(\frac{\kappa}{2})}{\Gamma(\frac{\kappa}{2})}
+ \frac{\tilde{\Phi}'(1)}{\tilde{\Phi}(1)}
\Bigg]+ O(X^{\frac{1}{2} + \varepsilon}).
\end{align*}
\end{thm}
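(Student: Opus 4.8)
The plan is to run Young's recursive method \cite{Young01,Young} in this $\mathrm{GL}_2$ setting; I will describe the argument for the first moment (Theorem \ref{main-thm}) and then indicate the changes needed for the derivative (Theorem \ref{main-thm1}). First I would open $L(\tfrac12, f\otimes\chi_{8d})$ by the approximate functional equation coming from \eqref{equ:FE}: for $\kappa\equiv 0\pmod 4$ the root number is $i^\kappa\epsilon(8d)=+1$, so one gets $L(\tfrac12, f\otimes\chi_{8d}) = 2\sum_{n\ge 1}\frac{\L_f(n)\chi_{8d}(n)}{\sqrt n}\,\omega_\kappa\!\left(\tfrac{n}{d}\right)$ with a weight $\omega_\kappa$ built from the $\Gamma$-factor that decays rapidly once $n\gg d^{1+\varepsilon}$. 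Summing against $\Phi(d/X)$ and detecting the squarefree condition by $\sum_{a^2\mid d}\mu(a)$ reduces everything to evaluating, for each odd $a$, an inner sum of the quadratic symbol $\left(\tfrac{\cdot}{n}\right)$ over $d$ coprime to $2n$ in a smooth range of length $\asymp X/a^2$.

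Next I would apply Poisson summation to this $d$-sum to the modulus $8n$, first splitting $n$ into its squarefull and squarefree parts so that the resulting quadratic Gauss-type sums $G_k(n)$ can be evaluated in closed form. The zero frequency $k=0$ contributes only when $n$ is a perfect square; collecting these terms, evaluating the Dirichlet series $\sum_m \L_f(m^2)m^{-s}$ in terms of $L(s,\operatorname{sym}^2 f)$, and pulling out the local factors (including a factor at $2$ and a factor $1/\zeta(2)=6/\pi^2$) assembles the arithmetic factor $Z^*$ (defined via \eqref{equ:LZ}, \eqref{equ:conj1}) and gives the main term $\tfrac{8\tilde\Phi(1)}{\pi^2}L(1,\operatorname{sym}^2 f)Z^*(0)X$. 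The crux is the dual sum over $k\ne 0$, which I would split according to whether $k$ is a perfect square. When $k$ is a square the Gauss sums again force $n$ to be essentially a square times a divisor of $k$, and the surviving expression reorganizes into a shifted copy of the original smoothed first moment of $L(\tfrac12, f\otimes\chi_{8d})$, but against a test function supported on a shorter scale; this is the recursive step. Iterating it a bounded number of times — the scale shrinks geometrically, so after $O(\log X)$ steps one terminates with a trivial bound — produces a convergent series of secondary main terms that one checks are absorbed into the stated main term, with total error $O(X^{1/2+\varepsilon})$. When $k$ is not a square, the symbol $\left(\tfrac{\cdot}{n}\right)$ twisted by $G_k(n)$ behaves like a fixed non-principal character of modulus $\asymp |k|$, so the $n$-Dirichlet series continues with no pole; shifting the contour to $\Re(s)=\varepsilon$, using the convexity bound for the resulting $L(\tfrac12+\varepsilon, f\otimes\psi)$ together with a large-sieve-type average over the range $|k|\ll na^2/X$ that makes the $k$-sum converge, yields $O(X^{1/2+\varepsilon})$.

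For Theorem \ref{main-thm1} the root number is $i^\kappa\epsilon(8d)=-1$, so $L(\tfrac12, f\otimes\chi_{8d})=0$ identically, and I would instead differentiate \eqref{equ:FE} to obtain an approximate functional equation for $L'(\tfrac12, f\otimes\chi_{8d})$ in which $\omega_\kappa$ is replaced by a weight carrying an extra logarithm (an incomplete-$\Gamma$-plus-logarithm factor). The Poisson-plus-recursion machinery above then applies with no essential change; the only difference is that each Dirichlet-series evaluation now comes with one extra differentiation, so the $k=0$ term produces, beyond the factor $X$, a factor $\log X$ together with the logarithmic-derivative constants $2L'(1,\operatorname{sym}^2 f)/L(1,\operatorname{sym}^2 f)$, $\,{Z^*}'(0)/Z^*(0)$, $\log\tfrac{8}{2\pi}$, $\Gamma'(\tfrac\kappa2)/\Gamma(\tfrac\kappa2)$ and $\tilde\Phi'(1)/\tilde\Phi(1)$ coming respectively from differentiating $L(2s+1,\operatorname{sym}^2 f)$ (the $2$ is the chain rule), the arithmetic factor, the conductor $\tfrac{8d}{2\pi}$, the archimedean factor $\Gamma(s+\tfrac{\kappa-1}{2})$ at $s=\tfrac12$, and the Mellin weight $\tilde\Phi(s+1)$; the off-diagonal $k\ne\square$ terms stay $O(X^{1/2+\varepsilon})$ since an extra logarithm is harmless.

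I expect the principal obstacle to be the rigorous control of the recursion: one must set up the iteration so that each ``shorter'' first moment appearing is genuinely of the same analytic shape (now with the symbol twisted by the squarefree kernel of the old frequency $k$), track the accumulation of error terms over the $O(\log X)$ iterations so that they still sum to $O(X^{1/2+\varepsilon})$, and verify that the resulting infinite series of secondary main terms telescopes to the closed form claimed. The secondary difficulty is the uniformity of the $k\ne\square$ bound: one needs the convexity bound for $L(s, f\otimes\psi)$ on $\Re(s)=\varepsilon$ together with an average-over-$k$ input (a quadratic large sieve in the spirit of Heath-Brown's) strong enough to beat the length of the $n$-sum, with careful bookkeeping of the contribution of the squarefree-detection parameter $a$.
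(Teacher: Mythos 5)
The central gap is the recursion itself. You attribute the recursive step to the square dual frequencies after Poisson summation, claiming that for $k$ a perfect square the dual sum reorganizes into a shorter-scale copy of the original first moment, to be iterated over $O(\log X)$ geometrically shrinking scales with telescoping secondary main terms. That is not the mechanism here, and it would not work: for a cusp form $f$, when $k$ is a square the $n$-sum produces $L(1+\alpha+s, f\otimes\chi_{k_1})$ with $k_1=1$, which is entire, so unlike the quadratic Dirichlet $L$-function case there is no pole, no secondary main term, and certainly no copy of the original moment to recurse on; in the paper \emph{all} frequencies $k\neq 0$ are pure error, bounded by $\ell^{1/2+\varepsilon}X^{1/2+\varepsilon}Y$ using the averaged bound of Lemma \ref{HB-bd} over the discriminants $k_1$ (Lemma \ref{lem:non-diagnol}). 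The actual recursion (Young's method as implemented here) is an induction on the error exponent $h$ for the shifted moment twisted by $\chi_{8d}(\ell)$ defined in \eqref{equ:M-l}, with error \emph{uniform in the twist}, $O(\ell^{1/2+\varepsilon}X^{h+\varepsilon})$, as formulated in Conjecture \ref{conj}: after the M\"obius removal of the squarefree condition one splits the new parameter $a$ at $Y$; the terms with $a>Y$ are converted back into genuine squarefree moments of length $X/c^2$ twisted by $\chi_{8e}(\ell_1 r_1\ell_2^2 r_2^2)$, to which the induction hypothesis is applied; balancing the Poisson error $X^{1/2+\varepsilon}Y$ against $X^{h+\varepsilon}/Y^{2h-1}$ with $Y=X^{(2h-1)/(4h)}$ yields the improved exponent $\frac{4h-1}{4h}$ (Theorem \ref{thm-assump}), and the iteration starts at $h=1$ (supplied by Lemma \ref{HB-bd}) and descends $1\to\frac34\to\frac23\to\cdots\to\frac12$. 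Your proposal never formulates the uniform-in-$\ell$, shifted-in-$\alpha$ statement needed to close any such induction, and your scale-shrinking iteration has no identifiable way to reach the error $O(X^{1/2+\varepsilon})$.

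On the derivative itself: differentiating the functional equation to build an approximate functional equation for $L'(\tfrac12, f\otimes\chi_{8d})$ with a logarithmic weight is a legitimate alternative route (it is essentially what Iwaniec, Murty--Murty and Petrow do), but the paper avoids redoing the whole analysis with log-weights. It keeps the shift $\alpha$, proves \eqref{assump} down to $h=\tfrac12$ with $\ell=1$, observes that for $\kappa\equiv 2\ (\operatorname{mod}4)$ the two main terms cancel at $\alpha=0$ (consistent with root number $-1$), and then differentiates \eqref{assump} in $\alpha$ before letting $\alpha\to 0$, the derivative of the error term remaining $O(X^{1/2+\varepsilon})$ by Cauchy's integral formula on a disc of radius $\asymp 1/\log X$. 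Your bookkeeping of where the constants $\log X$, $2L'(1,\operatorname{sym}^2 f)/L(1,\operatorname{sym}^2 f)$, ${Z^*}'(0)/Z^*(0)$, $\log\frac{8}{2\pi}$, $\Gamma'(\tfrac\kappa2)/\Gamma(\tfrac\kappa2)$, $\tilde\Phi'(1)/\tilde\Phi(1)$ come from matches what falls out of differentiating the $X^{1-2\alpha}\gamma_\alpha\tilde\Phi(1-2\alpha)L(1-2\alpha,\operatorname{sym}^2 f)Z(\tfrac12-\alpha,1)$ term, so that part is right in spirit; but as proposed it rests on the flawed recursion, so the proof does not go through without adopting the twisted-moment induction described above.
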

In the above, the symmetric square $L$-function is defined by 
\begin{align*}
L(s, \operatorname{sym}^2 f) &:= \zeta(2s) \sum_{n=1}^{\infty} \frac{\L_f( n^2)}{n^{s}}\\& = \prod_p \left( 1 - \frac{\alpha_f(p)^2}{p^s}\right)^{-1} \left( 1 - \frac{\alpha_f(p) \beta_f(p)}{p^s}\right)^{-1} \left( 1 - \frac{\beta_f(p)^2}{p^s}\right)^{-1},
\end{align*}
where $\Re(s)>1$, $\alpha_f(p ) + \beta_f(p ) = \L_f(p) $ and $\alpha_f(p )  \beta_f(p ) =1$. We see that the main term in Theorem \ref{main-thm1} coincides with \cite[Theorem 2.3]{Petrow}. Note that  in \cite[Theorem 2.3]{Petrow} the form of the moment and the definition of $Z^*$  is slightly different from ours.

It is worth mentioning that recently Bui--Florea--Keating--Roditty-Gershon \cite{BFKR}  obtained the error term of the same size $O(X^{\frac{1}{2}+\varepsilon})$ for the function field analogue.  The second moment, expected to be much more difficult, was computed asymptotically by Soundararajan and Young \cite{Sound-Young} under the generalized Riemann hypothesis. Their method was also used by Petrow \cite{Petrow} for studying moments of derivatives of twisted modular $L$-functions. The computation of asymptotic formulas for higher moments is believed beyond current techniques,  whereas we do have beautiful conjectures due to Keating-Snaith  \cite{Keating-Snaith02}  and Conrey-Farmer-Keating-Rubinstein-Snaith  \cite{Conrey-Farmer-Keating-Rubinstein-Snaith}.

The moments of quadratic twists of modular $L$-functions  are  comparable to the  moments of quadratic Dirichlet $L$-functions.  An iterative method, pioneered  by Heath-Brown \cite{HB01} to study mean values of real characters,  was further developed by Young \cite{Young01} to obtain an  error term $O(X^{\frac{1}{2} + \varepsilon})$ in  an asymptotic formula for the first moment of quadratic Dirichlet $L$-functions.  The error term  $O(X^{\frac{1}{2} + \varepsilon})$  was also essentially implicit in Goldfeld-Hoffstein's work \cite{Goldfeld-Hoffstein}. In addition, by using the recursive method, the third moment  of quadratic Dirichlet $L$-functions was improved to $O(X^{\frac{3}{4}+ \varepsilon})$ by Young \cite{Young},  and recently the second moment was improved to $O(X^{\frac{1}{2}+ \varepsilon})$ by Sono \cite{Sono}.  The moment  in Theorem \ref{main-thm} is analogous to the second moment of quadratic Dirichlet $L$-functions, so it should not be a coincidence that Sono's work \cite{Sono} and Theorem \ref{main-thm}  have the same error term $O(X^{\frac{1}{2}+ \varepsilon})$. The conjectured error term for the second moment of quadratic Dirichlet $L$-functions is $O(X^{\frac{1}{2}+ \varepsilon})$ (see Alderson-Rubinstein \cite{Alderson-Rubinstein}),  so it may be hard to improve  Theorems \ref{main-thm} and  \ref{main-thm1}. On a more fundamental level, it is because we do not know how to obtain an error term better than $O(X^{\frac{1}{2}+o(1)})$ unconditionally in the problem of counting square-free integers with a smooth weight.

The proof for Theorems \ref{main-thm} and \ref{main-thm1} is similar to \cite{Sono, Young01, Young}.     To adapt to the recursive method, we consider the shifted first moment twisted by a quadratic character as follows:
\begin{align}
M(\alpha, \ell): = \sideset{}{^*}\sum_{(d,2)=1} \chi_{8d} (\ell)L(\tfrac{1}{2} + \alpha,f \otimes \chi_{8d})\Phi(\tfrac{d}{X}),
\label{equ:M-l}
\end{align}
where $\l$ is a positive, odd integer. Write $\l = \l_1 \l_2^2$, where $\l_1$ is square-free. We may make the following conjecture.
\begin{conjec}
\label{conj}
Let $h \geq \frac{1}{2}$. Let $\Phi(x): (0,\infty) \rightarrow \mathbb{C}$ be a smooth, compactly supported function. 
Assume $|\Re(\a)| \ll \frac{1}{\log X}$ and $|\Im(\a)| \ll (\log X)^2$. Then for any $\varepsilon>0$, we have
\begin{equation}
\begin{split}
&M(\alpha, \ell) \\
 &= \frac{4X \tilde{\Phi}(1)}{\pi^2 \l_1 ^{\frac{1}{2} + \a}} L(1+2\alpha, \operatorname{sym}^2 f)  Z (\tfrac{1}{2} + \alpha, \l) \\
 &  +  i^\kappa  \frac{4 \gamma_\a X^{1-2\a} \tilde{\Phi}(1-2\a)}{\pi^2 \l_1 ^{\frac{1}{2} - \a}} L(1-2\alpha, \operatorname{sym}^2 f)  Z (\tfrac{1}{2} - \alpha, \l) + O(\l^{\frac{1}{2} + \varepsilon}X^{h+\varepsilon}).
\label{assump}
\end{split}
\end{equation}
Here the big $O$ is depending on $\varepsilon, h $ and $\Phi$. The symbol $\gamma_\a$ is defined in \eqref{gamma-alpha}. For $\Re(\gamma)>0$, 
\begin{align}
L(1+2\gamma, \operatorname{sym}^2 f)  Z (\tfrac{1}{2} + \gamma,\l) := \prod_{(p,2)=1}  Z_{p} (\tfrac{1}{2} +\gamma,  \l),
\label{equ:LZ}
\end{align}
where for $p| \l_1$,
\begin{equation}
\begin{split}
&Z_{p} (\tfrac{1}{2} + \gamma,  \l):=\\
&p^{\frac{1}{2} + \gamma} \left( \frac{p}{p+1} \right) \left[ \frac{1}{2} \left( 1- \frac{\L_f (p)}{p^{\frac{1}{2} + \gamma} } + \frac{1}{p^{1+2\gamma}}\right) ^{-1}- \frac{1}{2 } \left( 1+ \frac{\L_f (p)}{p^{\frac{1}{2} + \gamma} }+ \frac{1}{p^{1+2\gamma}}\right ) ^{-1} \right ],
\end{split}
 \label{equ:conj1}
\end{equation} 
for $ p\nmid \l_1, p| \l_2$,
\begin{equation}
\begin{split}
&Z_{p} (\tfrac{1}{2} + \gamma,  \l)\\
&:=
  \frac{p}{p+1}\left[ \frac{1}{2} \left( 1- \frac{\L_f (p)}{p^{\frac{1}{2} + \gamma} } + \frac{1}{p^{1+2\gamma}}\right) ^{-1}+ \frac{1}{2 } \left( 1+ \frac{\L_f (p)}{p^{\frac{1}{2} + \gamma} }+ \frac{1}{p^{1+2\gamma}}\right) ^{-1} \right] ,
\end{split}
\label{equ:conj2}
\end{equation}
and for $(p, 2\l) =1$,
\begin{equation}
\begin{split}
&Z_{p} (\tfrac{1}{2} + \gamma,  \l):=\\
&
 1 +  \frac{p}{p+1}\left[ \frac{1}{2} \left( 1- \frac{\L_f (p)}{p^{\frac{1}{2} + \gamma} } + \frac{1}{p^{1+2\gamma}}\right) ^{-1}+ \frac{1}{2 } \left( 1+ \frac{\L_f (p)}{p^{\frac{1}{2} + \gamma} }+ \frac{1}{p^{1+2\gamma}}\right) ^{-1} -1\right].
\end{split}
\label{equ:conj3}
\end{equation}
%

The function $Z (\tfrac{1}{2} + \gamma, \l)$ is analytic and absolutely convergent in the region $\Re(\gamma) > -\frac{1}{4}$.
\end{conjec}
The main term in \eqref{assump} can be conjectured by heuristically following this paper's argument  or using the recipe method in \cite{Conrey-Farmer-Keating-Rubinstein-Snaith}.
To obtain Theorems \ref{main-thm} and \ref{main-thm1}, it suffices to prove the following theorem.
\begin{thm}\label{thm-assump}
If Conjecture \ref{conj} is true for some $h \geq \frac{1}{2}$, then it is true for $\frac{4h-1}{4h}$ replacing $h$.
\end{thm}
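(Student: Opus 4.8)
The plan is to run Young's recursive argument on the twisted shifted first moment $M(\alpha,\ell)$ in \eqref{equ:M-l}. The starting point is an approximate functional equation coming from \eqref{equ:FE}: it writes $L(\tfrac12+\alpha,f\otimes\chi_{8d})$ as $\sum_{n}\lambda_f(n)\chi_{8d}(n)n^{-1/2-\alpha}V_\alpha(2\pi n/8d)$ plus the analogous sum with $\alpha\mapsto-\alpha$ multiplied by $i^\kappa(8d/2\pi)^{-2\alpha}$ and an explicit ratio of $\Gamma$-factors, each of the two sums being effectively of length $n\ll d\,X^\varepsilon\asymp X^{1+\varepsilon}$. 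Substituting this into \eqref{equ:M-l} and interchanging the order of summation, $M(\alpha,\ell)$ becomes, up to the dual term,
\[
\sum_{n\ge1}\frac{\lambda_f(n)}{n^{1/2+\alpha}}\,\sideset{}{^*}\sum_{(d,2)=1}\chi_{8d}(n\ell)\,\Phi\!\left(\tfrac{d}{X}\right)V_\alpha\!\left(\tfrac{2\pi n}{8d}\right).
\]
I then detect the squarefree condition on $d$ with the M\"obius function and apply Poisson summation to the sum over $d$ modulo (essentially) $8n_0$, where $n_0$ is the squarefree kernel of $n\ell$; this turns the $d$-sum into a dual sum over $k\in\mathbb{Z}$ weighted by a Gauss-type sum $G_k(8n_0)$ and a Fourier--Mellin transform of $\Phi$ and $V_\alpha$.

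The term $k=0$ survives only when $n\ell$ is a perfect square, and it gives the principal contribution. Writing $\ell=\ell_1\ell_2^2$, the condition $n\ell=\square$ forces $n=\ell_1 t^2$; carrying out the $d$-sum and then the $t$-sum, and using the Rankin--Selberg identity $\sum_{t}\lambda_f(\ell_1 t^2)t^{-s}=\lambda_f(\ell_1)L(s,\operatorname{sym}^2 f)/\zeta(2s)$ together with its corrections at the primes dividing $2\ell$ and the local densities coming from the coprimality conditions, one recovers precisely the Euler product $L(1+2\alpha,\operatorname{sym}^2 f)Z(\tfrac12+\alpha,\ell)$ of \eqref{equ:LZ}--\eqref{equ:conj1}. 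Shifting the Mellin contour past its pole and collecting the residue produces the first main term of \eqref{assump}; the same analysis applied to the dual piece of the approximate functional equation -- whose $\Gamma$-ratio and the factor $(8d/2\pi)^{-2\alpha}$ supply $\gamma_\alpha$ and the $X^{1-2\alpha}\tilde{\Phi}(1-2\alpha)$ shape -- produces the second.

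All remaining terms -- those with $n\ell\neq\square$ and $k\neq0$ -- form the off-diagonal $E(\alpha,\ell)$, and this is where the hypothesis at exponent $h$ re-enters. Evaluating the Gauss sum ($G_k(8n_0)$ vanishes unless $(k,n_0)=1$, in which case its size is $\asymp\sqrt{n_0}$ and $G_k(8n_0)/\sqrt{n_0}$ equals $\bigl(\tfrac{k}{n_0}\bigr)$ times an explicit unit), and opening $V_\alpha$ by Mellin inversion, the inner sum over $n$ becomes a Dirichlet series that factors -- up to an Euler product absolutely convergent in a half-plane -- as a product of $L(\,\cdot\,,f\otimes\chi_k)$, $L(\,\cdot\,,\operatorname{sym}^2 f)$ and $1/\zeta(\,\cdot\,)$, with the twisted factor evaluated near its center. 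After moving the contours (discarding the resulting polar terms, which must either vanish or merge into the two main terms and the admissible error) and using quadratic reciprocity to replace $\bigl(\tfrac{k}{n_0}\bigr)$ by $\chi_{8k}$ evaluated at the relevant part of $n$, the surviving sum over $k$ and $n$ is recognized as a short combination of \emph{new} shifted first moments $M(\alpha',\ell')$, in which the new twist $\ell'$ and the new conductor scale $X'$ are governed, respectively, by the old variable $n$ and by the Poisson-dual variable $k$ (restricted to $k\ll n\ell/X$). Feeding Conjecture \ref{conj} at exponent $h$ into each of these, summing over the relevant ranges, and balancing this against the elementary treatment of the range in which the Poisson-dual variable is small -- using the standard bound $\sum_{n\le N}\lambda_f(n)\chi(n)\ll(Nq)^{1/2+\varepsilon}$ for $\chi$ of conductor $q$ -- yields $E(\alpha,\ell)=O\!\left(\ell^{1/2+\varepsilon}X^{\frac{4h-1}{4h}+\varepsilon}\right)$, which is the asserted improvement of $h$ to $\tfrac{4h-1}{4h}$.

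The two delicate points -- and the real content of the theorem -- both sit in this last step. First, one must verify that the polar terms discarded while re-expressing $E(\alpha,\ell)$ through new moments are exactly accounted for, so that no spurious secondary main term survives beyond the two displayed ones. Second, one must track the two parameters $X$ and $\ell$ faithfully through the Poisson step, reciprocity, and the re-expansion, so that the new conductor scale and the new twist length combine, after balancing the two regimes, to give precisely the exponent $\tfrac{4h-1}{4h}$ and not a weaker one; all of this has to be kept uniform in $\alpha$ in the stated range $|\Re(\alpha)|\ll1/\log X$, $|\Im(\alpha)|\ll(\log X)^2$. By contrast the analytic inputs on the modular side -- the functional equation \eqref{equ:FE}, holomorphy and convexity bounds for $L(s,\operatorname{sym}^2 f)$ and $L(s,f\otimes\chi_k)$, the bound $|\lambda_f(n)|\le\tau(n)$, and elementary character-sum estimates -- are all standard, so the proof needs no new hard analysis beyond the bookkeeping of the recursion.
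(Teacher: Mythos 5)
Your overall frame (approximate functional equation, M\"obius removal of the squarefree condition, Poisson summation, diagonal $k=0$ main term, off-diagonal remainder) matches the paper's setup, but you have placed the recursion in the wrong piece, and that is a genuine gap rather than a cosmetic difference. In the paper the induction hypothesis is never applied to the Poisson dual terms: after M\"obius inversion the parameter $a$ is split at a threshold $Y$, Poisson summation is applied only to the range $a\le Y$, and the $k\neq 0$ terms are bounded \emph{unconditionally} by writing the $n$-sum as $L(1+\alpha+s,f\otimes\chi_{k_1})Z_3$ and invoking the Heath-Brown--type first moment bound of Lemma \ref{HB-bd}, giving $O(\ell^{1/2+\varepsilon}X^{1/2+\varepsilon}Y)$. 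The hypothesis at exponent $h$ enters only in the complementary range $a>Y$: writing $d=eb^2$, grouping by $c=ab$, and restoring the squarefree sum over $e$, one gets genuine shifted moments of the same family at the reduced conductor scale $X'=X/c^2\le X/Y^2$, to which Conjecture \ref{conj} applies and whose error sums to $O(\ell^{1/2+\varepsilon}X^{h+\varepsilon}/Y^{2h-1})$; balancing with $Y=X^{(2h-1)/(4h)}$ is exactly what produces $\frac{4h-1}{4h}$. Moreover, the main term from this large-$a$ piece ($M_{R,1}^+$) is needed to complete the $a\le Y$ diagonal to the full Euler product $L(1+2\alpha,\operatorname{sym}^2f)Z(\tfrac12+\alpha,\ell)$ (Lemma \ref{lem:complic}); in your scheme, where the $k=0$ term alone is supposed to give the whole main term and the hypothesis is fed into the off-diagonal, this bookkeeping has no counterpart.

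Your proposed use of the hypothesis on the $k\neq 0$ terms does not go through as stated: after reciprocity the dual sum is a sum over \emph{all} integers $k$ (including the non-fundamental parts $k_2^2$) of central values $L(\tfrac12+\alpha+s,f\otimes\chi_{k_1})$ weighted by oscillatory $\Gamma$/Mellin factors, not a sum over odd squarefree $d$ of $\chi_{8d}(\ell')L(\tfrac12+\alpha',f\otimes\chi_{8d})$ against a smooth compactly supported $\Phi$, so it is not of the shape covered by Conjecture \ref{conj}; this is precisely why the paper uses the unconditional Lemma \ref{HB-bd} there. You also never exhibit the two competing error terms nor the choice of the splitting parameter, so the claimed exponent $\frac{4h-1}{4h}$ is asserted rather than derived. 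To repair the argument, reinstate the split of the M\"obius parameter at $Y$, apply the hypothesis to the $a>Y$ tail at scale $X/c^2$, bound the off-diagonal terms via Lemma \ref{HB-bd}, and optimize $Y$.
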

\begin{proof}[Proof of Theorems  \ref{main-thm} and \ref{main-thm1}]
 We see  Conjecture \ref{conj} is true  for $h=1$ by Lemma \ref{HB-bd}  in the next section. By Theorem \ref{thm-assump} we can reduce it to $h= 1, \frac{3}{4}, \frac{2}{3}, \cdots$, which tends to $h = \frac{1}{2}$.   Set $\l=1$ and write 
 \begin{align}
 Z^*( \alpha) := Z(\tfrac{1}{2} + \a, 1). 
 \label{def-Zstar}
 \end{align}
 Then Theorem \ref{main-thm} follows by letting $\a \rightarrow 0$ in \eqref{assump}.  We can differentiate both sides of \eqref{assump} in terms of $\a$. Note that the error term in \eqref{assump} is holomorphic on the disc centred at $(0,0)$ with radius $\ll \frac{1}{\log X}$. Hence the size of the derivative of the error term is still $O(X^{\frac{1}{2} + \varepsilon})$ by  Cauchy's integral formula. This gives Theorem \ref{main-thm1} by letting $\a \rightarrow 0$. Note that  we can compute asymptotic formulas for the first moment of higher derivatives of twisted modular $L$-functions in a similar way.
\end{proof}
The rest of the paper will focus on proving Theorem \ref{thm-assump}. The idea is as follows. We first apply the approximate functional equation in the twisted $L$-function in \eqref{equ:M-l}. Then the M\"{o}bius inversion is used  to remove the square-free condition where the new parameter $a$ is introduced.  We split the summation over $a$ into two pieces. For large $a$, the Poisson summation formula is employed to separate the summation into diagonal terms and non-diagonal terms (see their definitions below \eqref{equ:M-1plus}).  On the other hand, for small $a$, we convert the summation  back to that with the square-free condition, where we will use the induction hypothesis \eqref{assump}. We obtain partial main terms and error terms there. These partial  main terms  can be perfectly combined with the diagonal terms  after some simplification, finally leading to the main term in  \eqref{assump}.  We remark that  there is  perfect  cancellation between various subsidiary terms  in the moments of quadratic Dirichlet $L$-functions (see \cite{Young01,Young, Sono}).  Note that it is hard to estimate these terms individually with an error better than $O(X^{\frac{3}{4} + \varepsilon})$. In our paper, we do not find this cancellation.  Fortunately, the subsidiary terms in this paper can be bounded by $O(X^{\frac{1}{2}+ \varepsilon})$, which is  small enough for our purpose.

\section{Preliminary lemmas}
\begin{lem}\label{lem:AFE}
Let $G(s)$ be an even, entire function with $G(0)=1$, bounded in any fixed strip $| \Re(s)| \leq A$,  and decaying rapidly as $|\Im (s)| \rightarrow \infty$. Let 
\begin{align*}
\omega_\alpha (\xi) &:= \frac{1}{2\pi i} \int_{(1)} \frac{G(s)}{s} g_\alpha (s) \xi^{-s} ds,
\end{align*}
where
\begin{align*}
g_\alpha(s) &:= (2\pi)^{-s} \frac{\Gamma (\frac{\kappa}{2} + \alpha + s)}{\Gamma(\frac{\kappa}{2} + \alpha)},
\end{align*}
and 
$\int_{(c)}$  denotes the contour integral $\int_{c - i \infty} ^{c+ i \infty}$. Set 
\begin{align*}
X_{\alpha,d} &:= \left( \frac{|d|}{2\pi}\right)^{-2\alpha} \frac{\Gamma(\frac{\kappa}{2} - \alpha)}{\Gamma (\frac{\kappa}{2} + \alpha)}.
\end{align*}
 Then we  have 
\begin{align*}
&L(\tfrac{1}{2} + \alpha, f \otimes \chi_d)\\
& =\sum_{n=1}^{\infty} \frac{\lambda_f (n)\chi_d (n)}{n^{\frac{1}{2} + \alpha}} \omega_\alpha \left( \frac{n}{|d|} \right) +  i^\kappa \epsilon(d) X_{\alpha,d} \sum_{n=1}^{\infty} \frac{\lambda_f(n)\chi_d(n)}{n^{\frac{1}{2} - \alpha}} \omega_{-\alpha} \left( \frac{n}{|d|}\right).
\end{align*}
\end{lem}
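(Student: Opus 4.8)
The plan is to run the standard contour-shift derivation of an approximate functional equation, the only step beyond routine bookkeeping being the manipulation of the archimedean and conductor factors that extracts $X_{\alpha,d}$. First I would consider
\[
I := \frac{1}{2\pi i}\int_{(2)} \Lambda(\tfrac12+\alpha+s, f\otimes\chi_d)\,\frac{G(s)}{s}\,ds,
\]
the contour being far enough to the right that the Dirichlet series of $L(\cdot, f\otimes\chi_d)$ converges absolutely on it. Since $\Lambda(\cdot, f\otimes\chi_d)$ is entire and of polynomial growth in vertical strips while $G(s)/s$ decays rapidly, the horizontal segments vanish and $I$ may be moved to the line $\Re(s)=-2$; the only pole crossed is the simple pole of $G(s)/s$ at $s=0$, of residue $\Lambda(\tfrac12+\alpha, f\otimes\chi_d)$ since $G(0)=1$. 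On the new line I substitute $s\mapsto -s$, use that $G$ is even, and apply the functional equation \eqref{equ:FE} in the form $\Lambda(\tfrac12+\alpha+s, f\otimes\chi_d) = i^\kappa\epsilon(d)\Lambda(\tfrac12-\alpha-s, f\otimes\chi_d)$; the remaining integral then becomes $i^\kappa\epsilon(d)$ times the same integral with $\alpha$ replaced by $-\alpha$. This gives the exact identity
\[
\Lambda(\tfrac12+\alpha, f\otimes\chi_d) = \frac{1}{2\pi i}\int_{(2)}\Lambda(\tfrac12+\alpha+s, f\otimes\chi_d)\frac{G(s)}{s}ds + i^\kappa\epsilon(d)\frac{1}{2\pi i}\int_{(2)}\Lambda(\tfrac12-\alpha+s, f\otimes\chi_d)\frac{G(s)}{s}ds.
\]

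Next I would insert $\Lambda(s, f\otimes\chi_d) = (|d|/2\pi)^s\,\Gamma(s+\tfrac{\kappa-1}{2})\,L(s, f\otimes\chi_d)$ and divide both sides by $(|d|/2\pi)^{1/2+\alpha}\Gamma(\tfrac\kappa2+\alpha)$. In the first integral the surviving factor is $(|d|/2\pi)^s\,\Gamma(\tfrac\kappa2+\alpha+s)/\Gamma(\tfrac\kappa2+\alpha) = |d|^s g_\alpha(s)$. In the second, writing $\Gamma(\tfrac\kappa2-\alpha+s)/\Gamma(\tfrac\kappa2+\alpha) = [\Gamma(\tfrac\kappa2-\alpha)/\Gamma(\tfrac\kappa2+\alpha)]\,[\Gamma(\tfrac\kappa2-\alpha+s)/\Gamma(\tfrac\kappa2-\alpha)]$ and pairing the first factor with the surviving power $(|d|/2\pi)^{-2\alpha}$ produces precisely $X_{\alpha,d}$, the rest being $|d|^s g_{-\alpha}(s)$. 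On $\Re(s)=2$ both Dirichlet series $\sum_n \lambda_f(n)\chi_d(n) n^{-1/2\mp\alpha-s}$ converge absolutely (using $|\lambda_f(n)|\le\tau(n)$), so I interchange summation and integration; I then shift each inner $s$-integral from $(2)$ back to $(1)$, which is legitimate because $G(s)g_{\pm\alpha}(s)/s$ is holomorphic in $0<\Re(s)\le 2$ (for $\kappa\ge 2$ and $|\Re(\alpha)|$ small the poles of the gamma factor all lie in $\Re(s)<0$). Recognizing $\omega_\alpha(n/|d|)$ and $\omega_{-\alpha}(n/|d|)$ in the two inner integrals then gives exactly the asserted formula.

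Every step above is routine; the only points that call for a little care are the growth estimate that legitimizes the first contour shift — polynomial growth of $\Lambda(\cdot, f\otimes\chi_d)$ in vertical strips (for instance via a convexity bound) set against the rapid decay of $G$ — and the rearrangement of the gamma and conductor factors that isolates $X_{\alpha,d}$. I do not foresee any genuine obstacle.
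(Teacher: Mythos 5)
Your proposal is correct and follows essentially the same route as the paper: form the contour integral of $\Lambda(\tfrac12+\alpha+s,f\otimes\chi_d)\,G(s)/s$, shift past the simple pole at $s=0$, substitute $s\mapsto -s$ and apply the functional equation \eqref{equ:FE}, then expand the $L$-functions as Dirichlet series to recognize $\omega_{\pm\alpha}$ and $X_{\alpha,d}$. The only differences (working on the lines $\Re(s)=\pm 2$ rather than $\pm 1$, and dividing out $(|d|/2\pi)^{1/2+\alpha}\Gamma(\tfrac\kappa2+\alpha)$ at the end rather than at the start) are cosmetic.
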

\begin{proof}
Set 
\begin{equation}
\begin{split}
I &:= \frac{1}{2\pi i} \int_{(1)} \left( \frac{|d|}{2\pi} \right)^s \frac{\Gamma ( \frac{\kappa}{2} + \alpha + s)}{\Gamma(\frac{\kappa}{2} + \alpha)} L (\tfrac{1}{2} + \alpha + s, f \otimes \chi_d) \frac{G(s)}{s} ds \\
 &=\frac{1}{2 \pi i} \int_{(1)}\frac{\Lambda (\frac{1}{2} + \alpha +s)}{\Gamma(\frac{\kappa}{2} + \alpha)} \frac{G(s)}{s} \left( \frac{d}{2\pi} \right) ^{-\frac{1}{2} - \alpha} ds.
 \end{split}
\label{equ:I}
\end{equation}
Move the line of integration to $\Re(s) = -1$. The residue theorem gives
\begin{align}
L(\tfrac{1}{2} + \alpha , f \otimes \chi_d)=  I- I',
\label{equ:I-I'}
\end{align}
where 
\begin{align*}
I' := \frac{1}{2\pi i} \int_{(-1)}  \frac{\Lambda (\frac{1}{2} + \alpha +s)}{\Gamma(\frac{\kappa}{2} + \alpha)} \frac{G(s)}{s} \left( \frac{d}{2\pi} \right) ^{-\frac{1}{2} - \alpha} ds.
\end{align*}
By changing the variable  $s \rightarrow -s$ and the functional equation \eqref{equ:FE}, 
\begin{equation}
\begin{split}
I' &= - i^\kappa \epsilon(d)  \frac{1}{2\pi i} \int_{(1)}  \frac{\Lambda (\frac{1}{2} - \alpha +s)}{\Gamma(\frac{\kappa}{2} + \alpha)} \frac{G(s)}{s} \left( \frac{|d|}{2\pi} \right) ^{-\frac{1}{2} - \alpha} ds\\
&= -  i^\kappa  \epsilon(d) X_{\alpha,d} \frac{1}{2\pi i} \int_{(1)} \left( \frac{|d|}{2\pi} \right)^s \frac{\Gamma ( \frac{\kappa}{2} - \alpha + s)}{\Gamma(\frac{\kappa}{2} - \alpha)} L (\tfrac{1}{2} - \alpha + s, f \otimes \chi_d) \frac{G(s)}{s} ds.
\end{split}
\label{equ:I'}
\end{equation}
Insert \eqref{equ:I} and \eqref{equ:I'} back  into  \eqref{equ:I-I'}  and write  $L (\tfrac{1}{2} \pm \alpha + s, f \otimes \chi_d)$ as  their Dirichlet series. This completes the proof.
\end{proof}
\begin{remark}\label{rem:G}
Write 
\begin{align*}
\mathcal{Z}(\a, s) :=\zeta(2 + 4 \alpha + 4s)(1+ 4 \alpha + 4s)  (1-4\a -4s).
\end{align*}
We can take 
\begin{equation*}
\begin{split}
G(s) &= e^{s^2}  \frac{\mathcal{Z}(\a, s) \mathcal{Z}(\a, - s) \cdot \mathcal{Z}(- \a, s)\mathcal{Z}(-\a, -s)}{\mathcal{Z}(\a, 0)^2 \mathcal{Z}(-\a, 0)^2}.
\end{split}
\end{equation*}
The purpose of adding some zeta factors into $G(s)$ is that they cancel out certain terms in $Z(\frac{1}{2} \pm \alpha \pm s)$.  See Lemma \ref{lem:lastlem} and \eqref{equ:above-final2} for an example.
\end{remark}
The following lemma is a generalized version of Poisson summation formula established by Soundararajan \cite[Lemma 2.6]{Sound00} (also see \cite[Lemma 2.2]{Sound-Young}).
\begin{lem}
 Let $\Phi$ be a smooth function with compact support on the positive real numbers, and suppose that $n$
 is an odd integer. Then
 \[
  \sum_{(d,2)=1}\left(\frac{d}{n}\right)\Phi\left(\frac{d}{Z}\right)=\frac{Z}{2n}\left(\frac{2}          {n}\right) \sum_{k\in \mathbb{Z}}(-1)^kG_{k}(n)\hat{\Phi}\left(\frac{kZ}{2n}\right),
 \]
where
 \begin{align*}
  G_k(n):=\left(\frac{1-i}{2}+\left(\frac{-1}{n}\right)\frac{1+i}{2}\right)\sum_{a\, (\operatorname{mod}{n})}
  \left(\frac{a}{n}\right)e\left(\frac{ak}{n}\right),
  \label{equ:def-G}
 \end{align*}
and
 \[
  \hat{\Phi}(y):=\int_{-\infty}^{\infty} \left(\cos(2\pi xy)+\sin(2\pi xy) \right)\Phi(x)dx
 \]
is a Fourier-type transform of $\Phi$. 
\label{lem:Poisson}
\end{lem}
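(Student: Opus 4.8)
The plan is to run the recursion of Young \cite{Young01,Young} (in the form adapted by Sono \cite{Sono} to the comparable second-moment problem): assuming \eqref{assump} for the given $h\ge\tfrac12$, we will re-derive it with $h$ replaced by $\tfrac{4h-1}{4h}$. Throughout, $\a$ stays in the region $|\Re(\a)|\ll 1/\log X$, $|\Im(\a)|\ll(\log X)^{2}$, in which every step is holomorphic; this makes the final error holomorphic on a disc of radius $\asymp 1/\log X$ about $\a=0$, which is exactly what is used to differentiate \eqref{assump} in the deduction of Theorem \ref{main-thm1}. First, Lemma \ref{lem:AFE} applied to each $L(\tfrac12+\a,f\otimes\chi_{8d})$ in \eqref{equ:M-l} (with $\epsilon(8d)=1$) splits $M(\a,\l)=M^{+}(\a,\l)+i^{\kappa}M^{-}(\a,\l)$, where $M^{+}$ carries $\sum_{n}\L_f(n)\chi_{8d}(n\l)\,n^{-\frac12-\a}\omega_{\a}(n/8d)$ with the $n$-sum essentially truncated at $n\ll X^{1+\varepsilon}$, and $M^{-}$ is the dual (with $\a\mapsto-\a$ and the factor $X_{\a,8d}$), whose $d$-sum against $\Phi(d/X)$ produces the second main term of \eqref{assump}, with the constant $\gamma_\a$. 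It suffices to treat $M^{+}$. We take $G(s)$ as in Remark \ref{rem:G}; its extra zeta factors are present precisely to cancel spurious contributions at the recombination stage.

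Next, writing $\mathbbm{1}_{d\text{ sq-free}}=\sum_{a^{2}\mid d}\mu(a)$ with $a$ odd, putting $d=a^{2}b$, and using $\chi_{8a^{2}b}(m)=\chi_{8b}(m)\mathbbm{1}_{(a,m)=1}$, we get $M^{+}(\a,\l)=\sum_{(a,2)=1}\mu(a)M^{+}_{a}(\a,\l)$. Fix a parameter $Y\in[1,X^{1/2}]$, to be optimized at the end, and split the $a$-sum at $Y$. For the large range $a>Y$ we apply Lemma \ref{lem:Poisson} to the sum over $b$ (modulus $n\l$, $Z=X/a^{2}$), separating it into the diagonal part $D_{>Y}$ ($k=0$) and the non-diagonal part $N_{>Y}$ ($k\ne0$). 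In $D_{>Y}$ the Gauss-type factor $G_{0}(n\l)$ forces $n\l$ to be a perfect square, hence $n=\l_{1}m^{2}$ since $\l=\l_{1}\l_{2}^{2}$ with $\l_{1}$ square-free; a contour shift then identifies $D_{>Y}$ with an \emph{incomplete} (tail $a>Y$) piece of the first main term of \eqref{assump}. In $N_{>Y}$, for each fixed $k\ne0$ the factor $G_{k}(n\l)$ is a multiplicative, Gauss-sum-type function of $n$, so the inner $n$-sum is a coefficient sum of an $L$-function of $f$ twisted by a quadratic character of conductor $\ll k\l$; extracting the resulting cancellation, together with the rapid decay of $\^{\Phi}$ and $G(s)$, and then summing over $k$, $n$, and $a>Y$, bounds $N_{>Y}$ by $\ll\l^{\frac12+\varepsilon}X^{\varepsilon}(X/Y+X^{1/2})$ (the gain over the trivial size being a power of $Y$).

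For the small range $a\le Y$ we undo the approximate functional equation (recombining $M^{+}_{\le Y}$ with the corresponding piece of $M^{-}$), recovering $\sum_{(a,2)=1,\,a\le Y}\mu(a)\sum_{(b,2)=1}\chi_{8a^{2}b}(\l)L(\tfrac12+\a,f\otimes\chi_{8a^{2}b})\Phi(a^{2}b/X)$. Expanding the finitely many Euler factors removed at primes $p\mid a$ as a short Dirichlet polynomial in $\chi_{8b}$ and re-imposing square-freeness through $b=c^{2}e$ ($e$ square-free, $c$ odd with $c^{2}\le X/a^{2}$), we recognise the $e$-sum (after absorbing a mismatch between the weight $\omega_{\a}(n/8(ac)^{2}e)$ and the one native to the shorter problem) as a finite combination of twisted moments $M(\a,j\l';X/(ac)^{2})$ with $j\l'\ll ac\,\l$; we apply \eqref{assump} to those for which $X/(ac)^{2}$ is large and bound the rest (short moments, $ac$ close to $X^{1/2}$) by the quadratic large sieve. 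The main terms assemble into the \emph{complementary} (head $a\le Y$) pieces of the two main terms of \eqref{assump}, and the error terms are retained. Adding $D_{>Y}$ to the partial main terms of this range completes the $a$-sum; the poles produced by the split cancel against the zeta factors in $G(s)$ (cf.\ Remark \ref{rem:G}), and a residue computation identifies the total with the claimed main terms of \eqref{assump}, simultaneously producing the local factors \eqref{equ:conj1} and the analyticity of $Z(\tfrac12\pm\a,\l)$ for $\Re(\gamma)>-\tfrac14$. Finally, collecting the errors from both ranges and choosing $Y$ (a suitable power of $X$ depending on $h$) to balance the power of $Y$ gained in $N_{>Y}$ against the inductive error from the small range yields a total error $O(\l^{\frac12+\varepsilon}X^{\frac{4h-1}{4h}+\varepsilon})$, which is \eqref{assump} with $\tfrac{4h-1}{4h}$ in place of $h$.

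The main obstacle is the non-diagonal estimate of the large range: one must extract genuine square-root-type cancellation from the sums $\sum_{n}\L_f(n)G_{k}(n\l)(\cdots)$, uniformly in both $k$ and $\l$, strongly enough that, after the optimal choice of $Y$, $N_{>Y}$ is dominated by the inductive contribution --- and, as the introduction points out, here (unlike in the quadratic Dirichlet case) there are no extra cancellations between individual terms to exploit. A secondary difficulty is the bookkeeping in the small range: controlling the archimedean-weight discrepancies in the reduction to shorter moments, treating those shorter moments for which the induction hypothesis is too weak, and verifying that the two families of incomplete main terms glue --- with the $G(s)$-engineered cancellations --- into precisely the conjectured expression \eqref{assump}, local factors and region of analyticity included.
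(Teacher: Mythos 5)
Your proposal does not prove the statement at hand. The statement is Lemma \ref{lem:Poisson}, the quadratic-character Poisson summation formula
\[
\sum_{(d,2)=1}\left(\frac{d}{n}\right)\Phi\left(\frac{d}{Z}\right)=\frac{Z}{2n}\left(\frac{2}{n}\right)\sum_{k\in\mathbb{Z}}(-1)^{k}G_{k}(n)\hat{\Phi}\left(\frac{kZ}{2n}\right),
\]
whereas what you wrote is a sketch of the recursive argument for Theorem \ref{thm-assump} (splitting the $a$-sum at $Y$, diagonal versus off-diagonal terms, feeding the induction hypothesis back in, choosing $Y$ to balance errors). Nowhere in your text is the displayed identity derived; in fact you \emph{invoke} Lemma \ref{lem:Poisson} as a tool in your second paragraph, so read as a proof of that lemma the argument would be circular. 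This is a complete mismatch between statement and proof, not a fixable gap in an otherwise relevant argument.

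For the record, the lemma itself is not proved in the paper either: it is quoted from Soundararajan (Lemma 2.6 of \cite{Sound00}; see also Lemma 2.2 of \cite{Sound-Young}). A correct self-contained proof runs along the following standard lines: split the odd integers $d$ into residue classes $a \pmod{2n}$, apply the classical Poisson summation formula to the smooth sum $\sum_{d\equiv a (2n)}\Phi(d/Z)$, which produces $\frac{Z}{2n}\sum_{k}e\!\left(\frac{ka}{2n}\right)$ against the Fourier transform of $\Phi$ at $\frac{kZ}{2n}$; then evaluate the resulting complete sum $\sum_{a \pmod{2n},\,a\ \mathrm{odd}}\left(\frac{a}{n}\right)e\!\left(\frac{ka}{2n}\right)$ by separating the modulus into its $2$-part and $n$-part, which yields the factor $\left(\frac{2}{n}\right)$, the sign $(-1)^{k}$, and the Gauss-type sum $\sum_{a \pmod n}\left(\frac{a}{n}\right)e\!\left(\frac{ak}{n}\right)$; finally, the prefactor $\frac{1-i}{2}+\left(\frac{-1}{n}\right)\frac{1+i}{2}$ and the combination $\cos+\sin$ in $\hat{\Phi}$ come from pairing the frequencies $k$ and $-k$ and using the behaviour of the quadratic Gauss sum under $a\mapsto-a$, which depends on $n \bmod 4$. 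None of this appears in your proposal, so it cannot be accepted as a proof of Lemma \ref{lem:Poisson}.
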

The Gauss-type sum $G_k(n)$ above can be explicitly computed in the following lemma (see \cite[Lemma 2.3]{Sound00}).
\begin{lem}
 If $m$ and $n$ are relatively prime odd integers, then $G_k(mn)=G_k(m)G_k(n)$. Moreover, if $p^{\alpha}$ is 
 the largest power of $p$ dividing $k$ (setting $\alpha=\infty$ if $k=0$), then 
 \begin{align*}
 G_k(p^{\beta})=\left\{
 \begin{array}
  [c]{ll}
  0 & \text{if }\beta\leq\alpha \text{ is odd},\\
  \phi(p^{\beta}) & \text{if }\beta\leq\alpha \text{ is even},\\
  -p^{\alpha} & \text{if }\beta=\alpha+1 \text{ is even},\\
  \left(\frac{kp^{-\alpha}}{p}\right)p^{\alpha}\sqrt{p} & \text{if }\beta=\alpha+1 \text{ is odd},\\
  0 & \text{if }\beta\geq\alpha+2.
 \end{array}
 \right.
\end{align*}
\label{lem:preciseG}
Here $\phi$ is the Euler totient function.
\end{lem}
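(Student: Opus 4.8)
The statement is classical, and the plan is to proceed in three stages: first strip off the root-of-unity prefactor, then obtain multiplicativity from a Chinese Remainder factorization of a Gauss sum, and finally evaluate the prime powers by separating the parity of the exponent.

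Write $\varepsilon_n := \tfrac{1-i}{2} + \left(\tfrac{-1}{n}\right)\tfrac{1+i}{2}$, which equals $1$ when $n\equiv 1\pmod 4$ and $-i$ when $n\equiv 3\pmod 4$, and set $g_k(n) := \sum_{a\,(\mathrm{mod}\,n)}\left(\tfrac{a}{n}\right)e(ak/n)$, so that $G_k(n)=\varepsilon_n g_k(n)$. For coprime odd $m,n$ one writes $a\equiv n\overline n\,b+m\overline m\,c\pmod{mn}$ as $b,c$ run over residues mod $m,n$ (bars denoting inverses modulo the complementary modulus); then $\left(\tfrac{a}{mn}\right)=\left(\tfrac{b}{m}\right)\left(\tfrac{c}{n}\right)$, the additive character splits as $e(\overline n bk/m)\,e(\overline m ck/n)$, and the substitutions $b\mapsto nb$, $c\mapsto mc$ untwist the two resulting sums, producing $g_k(mn)=\left(\tfrac{m}{n}\right)\left(\tfrac{n}{m}\right)g_k(m)g_k(n)=(-1)^{\frac{m-1}{2}\frac{n-1}{2}}g_k(m)g_k(n)$ by quadratic reciprocity for the Jacobi symbol. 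A short check on $m,n\bmod 4$ then gives $\varepsilon_{mn}(-1)^{\frac{m-1}{2}\frac{n-1}{2}}=\varepsilon_m\varepsilon_n$, and multiplying through yields $G_k(mn)=G_k(m)G_k(n)$.

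For the prime-power evaluation, fix an odd prime $p$ with $p^\alpha\,\|\,k$ and separate according to the parity of $\beta$. When $\beta$ is even, $\left(\tfrac{a}{p^\beta}\right)$ is the principal character mod $p$, so $g_k(p^\beta)=\sum_{a\,(p^\beta)}e(ak/p^\beta)-\sum_{b\,(p^{\beta-1})}e(bk/p^{\beta-1})$ is a Ramanujan sum; each complete exponential sum is $p^\beta$ (resp.\ $p^{\beta-1}$) if $p^\beta\mid k$ (resp.\ $p^{\beta-1}\mid k$) and $0$ otherwise, and comparing with $\alpha$ gives $\phi(p^\beta)$, $-p^\alpha$, or $0$ in the three subranges $\beta\le\alpha$, $\beta=\alpha+1$, $\beta\ge\alpha+2$; since an even power of an odd prime is $\equiv 1\pmod 4$ the factor $\varepsilon_{p^\beta}$ is $1$, giving the three ``even'' lines. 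When $\beta$ is odd, $\left(\tfrac{a}{p^\beta}\right)=\left(\tfrac{a}{p}\right)$ depends only on $a\bmod p$, so writing $a=a_0+pj$ and summing $j$ over residues mod $p^{\beta-1}$ gives $g_k(p^\beta)=\big(\sum_{a_0\,(p)}\left(\tfrac{a_0}{p}\right)e(a_0k/p^\beta)\big)\big(\sum_{j\,(p^{\beta-1})}e(jk/p^{\beta-1})\big)$; the second factor vanishes unless $p^{\beta-1}\mid k$, in which case it equals $p^{\beta-1}$. If $\beta\le\alpha$ the first factor reduces to $\sum_{a_0}\left(\tfrac{a_0}{p}\right)=0$; if $\beta=\alpha+1$, writing $k=p^\alpha k'$ with $p\nmid k'$ the first factor is $\left(\tfrac{k'}{p}\right)g_1(p)$ with $g_1(p)$ the classical quadratic Gauss sum mod $p$, and inserting $g_1(p)=\varepsilon_p^{-1}\sqrt p$ together with $\varepsilon_{p^\beta}=\varepsilon_p$ (valid since $p^{\beta-1}$ is an even power) gives $G_k(p^\beta)=\left(\tfrac{kp^{-\alpha}}{p}\right)p^\alpha\sqrt p$; and if $\beta\ge\alpha+2$ the sum is $0$. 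The case $k=0$ (that is, $\alpha=\infty$) always falls in the range $\beta\le\alpha$ and is covered by the above.

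The only point needing care is the consistent bookkeeping of the quartic unit $\varepsilon_n$ across the three places it appears — the Chinese Remainder step, the reciprocity identity, and the classical value of the Gauss sum modulo $p$; each individual computation is short, so I do not expect a real obstacle.
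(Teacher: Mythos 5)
Your proof is correct. The paper itself gives no argument for this lemma --- it is quoted verbatim from Soundararajan \cite[Lemma 2.3]{Sound00} --- and your treatment (peeling off the unit $\varepsilon_n$, Chinese Remainder multiplicativity combined with quadratic reciprocity to absorb the factor $\left(\tfrac{m}{n}\right)\left(\tfrac{n}{m}\right)$, and the prime-power evaluation via Ramanujan sums for even $\beta$ and the classical Gauss sum for odd $\beta$) is precisely the standard proof of that cited result, with all the bookkeeping of $\varepsilon_{p^\beta}$ handled correctly.
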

We need the following upper bound for the first moment of twisted modular $L$-functions. It is analogous to  \cite[Theorem 2]{HB01} of Heath-Brown. 
\begin{lem}\label{HB-bd}
For $\sigma\geq \frac{1}{2}$, we have 
\[
\sideset{}{^\flat} \sum_{|d| \leq X} |L(\sigma + it, f \otimes \chi_d)| \ll_\varepsilon X^{1 + \varepsilon} (1+ |t|)^{\frac{1}{2} + \varepsilon},
\]
where $\sideset{}{^\flat}\sum$ denotes the summation over fundamental discriminants.
\end{lem}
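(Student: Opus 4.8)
\textbf{Proof proposal for Lemma \ref{HB-bd}.}

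The plan is to follow Heath-Brown's large sieve-type argument from \cite[Theorem 2]{HB01}, which bounds mean values of real characters, and adapt it to the coefficients $\lambda_f(n)$ of a Hecke eigenform. First I would reduce to a finite Dirichlet polynomial: applying the approximate functional equation from Lemma \ref{lem:AFE} (with $\alpha = \sigma - \tfrac12 + it$, or a suitable specialization, and using the rapid decay of $\omega_\alpha$ and the convexity-type estimate $X_{\alpha,d} \ll (1+|d|)^{|2\Re\alpha|}(1+|t|)^{O(1)}$) we may write $L(\sigma+it, f\otimes\chi_d)$ as a sum over $n \ll |d|^{1+\varepsilon}(1+|t|)^{1+\varepsilon}$ of terms $\lambda_f(n)\chi_d(n) n^{-\sigma-it}$ with smooth weights, up to a negligible error. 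Since we only need an upper bound and the weight functions $\omega_{\pm\alpha}(n/|d|)$ decay rapidly, a dyadic decomposition in $n$ reduces the claim to bounding $\sideset{}{^\flat}\sum_{|d|\le X} \bigl| \sum_{N < n \le 2N} a_n \chi_d(n) \bigr|$ for $N \ll X^{1+\varepsilon}(1+|t|)^{1+\varepsilon}$ and coefficients $a_n$ with $|a_n| \ll \tau(n) n^{-\sigma}$.

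Next I would remove the absolute value by the standard duality/Cauchy--Schwarz device: insert a unimodular $d$-dependent weight $\beta_d$ (so the sum becomes $\sum_d \beta_d \sum_n a_n \chi_d(n)$), swap the order of summation, and apply Cauchy--Schwarz in the $n$ variable, bounding $\sum_n |a_n|^2 \ll N^{1-2\sigma+\varepsilon} \ll X^{\varepsilon}(1+|t|)^{\varepsilon}$ (here $\sigma \ge \tfrac12$ is used crucially to keep this power of $N$ under control, since $N^{1-2\sigma}\le 1$ when $\sigma\ge\tfrac12$), so that the problem is reduced to the large sieve inequality for real characters
\[
\sum_{N < n \le 2N} \Bigl| \sideset{}{^\flat}\sum_{|d| \le X} \beta_d \chi_d(n) \Bigr|^2 \ll (X + N) (XN)^{\varepsilon} \sum_{|d|\le X} |\beta_d|^2 ,
\]
which is exactly the quadratic large sieve of Heath-Brown \cite{HB01} (or Jutila). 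With $N \ll X^{1+\varepsilon}(1+|t|)^{1+\varepsilon}$ this yields $\sum_d |L(\sigma+it)| \ll X^{1/2+\varepsilon}\bigl(X^{1/2}+N^{1/2}\bigr)(1+|t|)^{\varepsilon} \ll X^{1+\varepsilon}(1+|t|)^{1/2+\varepsilon}$, as claimed; the $(1+|t|)^{1/2+\varepsilon}$ comes from the square-root of the effective length $N$ in the $t$-aspect.

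The main obstacle is ensuring that the presence of the Hecke eigenvalues $\lambda_f(n)$ does not degrade the large sieve bound: one must check that $|\lambda_f(n)| \le \tau(n)$ is enough to treat $a_n$ as essentially bounded (which it is, after absorbing $\tau(n)^2$ into $N^\varepsilon$ in the $\ell^2$ sum), and that no extra main term appears — here the point is that $\sum_n |a_n|^2$ is genuinely $O(N^\varepsilon)$ rather than growing, precisely because $\sigma \ge \tfrac12$; for $\sigma < \tfrac12$ the argument would break down, which is why the hypothesis is stated as it is. A secondary technical point is keeping careful track of the $t$-dependence through the $\Gamma$-factors in $X_{\alpha,d}$ and the weight $\omega_\alpha$: Stirling's formula gives the length of the Dirichlet polynomial as $|d|(1+|t|)$ up to $|d|^\varepsilon(1+|t|)^\varepsilon$, and one must confirm that the off-diagonal/error contributions from shifting contours in Lemma \ref{lem:AFE} are genuinely negligible uniformly in this range of $t$, which follows from the rapid decay of $G(s)$ and of the $\Gamma$-ratios.
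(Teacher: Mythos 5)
Your route is genuinely different from the paper's, which disposes of the lemma in one line: it quotes the second-moment bound of Soundararajan--Young \cite[Corollary 2.5]{Sound-Young}, i.e.\ $\sideset{}{^\flat}\sum_{|d|\le X}|L(\sigma+it,f\otimes\chi_d)|^2\ll X^{1+\varepsilon}(1+|t|)^{1+\varepsilon}$ for $\sigma\ge\tfrac12$, and applies Cauchy--Schwarz over the $\ll X$ fundamental discriminants, giving $X^{1/2}\cdot X^{1/2+\varepsilon}(1+|t|)^{1/2+\varepsilon}$. You instead reprove the substance of that input from scratch by Heath-Brown's method (approximate functional equation of length $\asymp|d|(1+|t|)$, dyadic decomposition, duality, quadratic large sieve); this is legitimate, and it is essentially the engine behind the cited second-moment bound, but it is far more work than the paper's intended proof.

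There is one genuine inaccuracy in your key step: the large sieve inequality you invoke is false as you state it, because you let $n$ run over a full dyadic block. For $n$ a perfect square (more generally, $n$ with a large square part), $\chi_d(n)=1$ for all $d$ coprime to $n$, so with unimodular $\beta_d$ the inner sum $\sideset{}{^\flat}\sum_{|d|\le X}\beta_d\chi_d(n)$ can have size $\asymp X$, and the $\asymp N^{1/2}$ squares in $(N,2N]$ alone contribute $\asymp N^{1/2}X\sum_d|\beta_d|^2$, which exceeds $(X+N)(XN)^{\varepsilon}\sum_d|\beta_d|^2$ in the relevant range. Heath-Brown's inequality \cite{HB01} requires the moduli on both sides to be (odd) squarefree. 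The standard repair is to write $n=n_1n_2^2$ with $n_1$ squarefree, apply the large sieve only in the $n_1$ variable, and bound the square (diagonal) contribution trivially: it is $\ll X\sum_{m}\tau(m^2)m^{-2\sigma}\ll X^{1+\varepsilon}$, where the hypothesis $\sigma\ge\tfrac12$ is again what keeps this under control, so the final bound $X^{1+\varepsilon}(1+|t|)^{1/2+\varepsilon}$ survives. With that amendment (plus the routine passage from fundamental discriminants to odd squarefree moduli), your argument is correct; your bookkeeping of the $t$-aspect through the conductor is fine.
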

\begin{proof}
It  follows from  \cite[Corollary 2.5]{Sound-Young} and the Cauchy-Schwarz inequality.
\end{proof}

\begin{lem}\label{lem:lastlem}
Let $\Re(\gamma) > 0$. Then for any integer $N\geq 0$, we have
\begin{align*}
&L(1+2\gamma, \operatorname{sym}^2 f)  Z (\tfrac{1}{2} + \gamma,\l) \\
&  = L(1+2\gamma, \operatorname{sym}^2 f)  \frac{\zeta(2^{N+1} + 2^{N+2}\gamma) }{\zeta(2+4\gamma)} Z^N(\tfrac{1}{2} + \gamma,\l).
\end{align*}
Here $Z^N(\tfrac{1}{2} + \gamma,\l)$ is analytic  and  is bounded by $\l^ \varepsilon $ in the region 
$\Re(\gamma) > \max ( - \frac{1}{2} + \varepsilon, - \frac{1}{2} + \frac{1}{2^{N+2}} + \frac{\varepsilon}{2^{N+2}} )$. Note $Z^0 = Z$.
\end{lem}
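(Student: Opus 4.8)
\textbf{Proof strategy for Lemma \ref{lem:lastlem}.}

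The plan is to prove the identity by induction on $N$, using the Euler product definition of $Z$ in \eqref{equ:LZ}--\eqref{equ:conj1} together with the shape of the extra zeta factor we are peeling off at each stage. For the base case $N=0$ there is nothing to prove: the claimed formula reads $L(1+2\gamma,\operatorname{sym}^2 f)Z(\tfrac12+\gamma,\l) = L(1+2\gamma,\operatorname{sym}^2 f)\tfrac{\zeta(2+4\gamma)}{\zeta(2+4\gamma)}Z^0(\tfrac12+\gamma,\l)$, so $Z^0=Z$ as asserted, and the stated region $\Re(\gamma) > -\tfrac14$ (which is what Conjecture \ref{conj} records for $Z$) is contained in $\Re(\gamma) > \max(-\tfrac12+\varepsilon, -\tfrac12 + \tfrac14 + \tfrac{\varepsilon}{4})$, while the bound $Z^0 \ll \l^\varepsilon$ on that region follows from absolute convergence of the Euler product there together with the trivial estimate $|\lambda_f(p)| \le 2$ on the finitely many Euler factors indexed by $p \mid \l$.

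For the inductive step, suppose the identity holds for some $N \ge 0$, so that $L(1+2\gamma,\operatorname{sym}^2 f)Z = L(1+2\gamma,\operatorname{sym}^2 f)\tfrac{\zeta(2^{N+1}+2^{N+2}\gamma)}{\zeta(2+4\gamma)}Z^N$. I would write $Z^N$ as an Euler product over odd primes and, in each local factor, expand the term $\zeta_p(2^{N+1}+2^{N+2}\gamma)^{-1} = 1 - p^{-(2^{N+1}+2^{N+2}\gamma)}$ times the local factor of $Z^N$; the key computation is that for $(p,2\l)=1$ the local factor of $Z^N$ equals $1 + O(p^{-2^{N+1}-2^{N+2}\Re\gamma})$ as one checks directly from \eqref{equ:conj1}, so multiplying by $1 - p^{-2^{N+1}-2^{N+2}\gamma}$ produces a local factor of the form $1 + O(p^{-2^{N+2}-2^{N+3}\Re\gamma})$, which is exactly the decay rate matching $\zeta(2^{N+2}+2^{N+3}\gamma)^{-1}$. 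Pulling out the full zeta factor $\zeta(2^{N+2}+2^{N+3}\gamma)^{-1}$ from $\zeta(2^{N+1}+2^{N+2}\gamma)^{-1}Z^N$ therefore defines the next function $Z^{N+1}(\tfrac12+\gamma,\l)$ via
\begin{align*}
\frac{\zeta(2^{N+1}+2^{N+2}\gamma)}{\zeta(2+4\gamma)}Z^N(\tfrac12+\gamma,\l) = \frac{\zeta(2^{N+2}+2^{N+3}\gamma)}{\zeta(2+4\gamma)}Z^{N+1}(\tfrac12+\gamma,\l),
\end{align*}
and the remaining Euler product for $Z^{N+1}$ converges absolutely, and is $\ll \l^\varepsilon$, precisely when $2^{N+2} + 2^{N+3}\Re\gamma > 1 + \varepsilon$, i.e. $\Re\gamma > -\tfrac12 + \tfrac{1}{2^{N+3}} + \tfrac{\varepsilon}{2^{N+3}}$, which together with the previous region $\Re\gamma > -\tfrac12+\varepsilon$ (still needed to control the finitely many factors $p \mid \l$ and the $L(1+2\gamma,\operatorname{sym}^2 f)$ piece in the background) gives the claimed region for step $N+1$. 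The $\l^\varepsilon$ bound comes from isolating the $p \mid \l$ factors, each of which is $\ll p^\varepsilon$ since its defining expression in \eqref{equ:conj1} is a bounded combination of $(1 \pm \lambda_f(p)p^{-1/2-\gamma} + p^{-1-2\gamma})^{-1}$ with $|\lambda_f(p)| \le 2$, and $\prod_{p \mid \l}p^\varepsilon \le \l^\varepsilon$.

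The main obstacle is purely bookkeeping: one must verify cleanly that in the case $(p,2\l)=1$ the local factor of $Z^N$ has its leading nontrivial term of size exactly $p^{-(2^{N+1}+2^{N+2}\gamma)}$ with the correct coefficient so that multiplication by $1 - p^{-(2^{N+1}+2^{N+2}\gamma)}$ kills it, and that no new pole in $\gamma$ is introduced at any stage. For $N=0$ this is visible already in \eqref{equ:conj1}: for $(p,2\l)=1$ one has $Z_p(\tfrac12+\gamma,\l) = 1 + \tfrac{p}{p+1}\big[\tfrac12(1-\lambda_f(p)p^{-1/2-\gamma}+p^{-1-2\gamma})^{-1} + \tfrac12(1+\lambda_f(p)p^{-1/2-\gamma}+p^{-1-2\gamma})^{-1} - 1\big]$, and expanding the bracket in powers of $p^{-1/2-\gamma}$ the odd powers of $\lambda_f(p)p^{-1/2-\gamma}$ cancel while the $p^{-1-2\gamma}$ contributions combine so that the bracket is $\lambda_f(p)^2 p^{-1-2\gamma} + O(p^{-2-4\Re\gamma}) - p^{-1-2\gamma} \cdot(\text{lower order})$; comparing with the Euler factor of $L(1+2\gamma,\operatorname{sym}^2 f)$, whose $p$-factor is $1 + \lambda_f(p^2)p^{-1-2\gamma} + \cdots = 1 + (\lambda_f(p)^2-1)p^{-1-2\gamma}+\cdots$ by the Hecke relation, one sees the factor $\zeta(2+4\gamma)^{-1}$ accounts precisely for the residual $-p^{-1-2\gamma}$, leaving $Z^0=Z$ absolutely convergent for $\Re\gamma>-\tfrac14$. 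The general step is the same expansion with $p^{-1/2-\gamma}$ replaced by its $2^{N+1}$-th analogue, and I would simply carry out this local expansion once, cite it, and let the induction run.
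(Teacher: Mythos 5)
Your overall strategy is the same as the paper's: extract the $\operatorname{sym}^2$ Euler factor locally, observe that the remaining good local factor is $1-p^{-(2+4\gamma)}+O(p^{-1-\varepsilon})$ on $\Re(\gamma)>-\tfrac12+\varepsilon$, and then trade $\zeta(2+4\gamma)$ for $\zeta(2^{N+1}+2^{N+2}\gamma)$ by repeatedly using $1-y^{2}=(1-y)(1+y)$; the paper just does all $N$ steps at once via the telescoping product $\prod_{m=1}^{N}\bigl(1+p^{-(2^{m}+2^{m+1}\gamma)}\bigr)^{-1}$ instead of your induction. However, as written your inductive mechanism has a genuine sign/direction error, and the one computation everything hinges on is exactly the step you defer. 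The good local factor of $Z^{N}$ is $1-p^{-(2^{N+1}+2^{N+2}\gamma)}+O(p^{-1-\varepsilon})$, with a \emph{minus} sign; and your own displayed relation $\zeta(2^{N+1}+2^{N+2}\gamma)Z^{N}=\zeta(2^{N+2}+2^{N+3}\gamma)Z^{N+1}$ forces $Z^{N+1}_{p}=\bigl(1+p^{-(2^{N+1}+2^{N+2}\gamma)}\bigr)Z^{N}_{p}$ at primes $p\nmid 2\ell$, i.e.\ you multiply by the local factor of $\zeta(2^{N+1}+2^{N+2}\gamma)/\zeta(2^{N+2}+2^{N+3}\gamma)$, \emph{not} by $\zeta_{p}(2^{N+1}+2^{N+2}\gamma)^{-1}=1-p^{-(2^{N+1}+2^{N+2}\gamma)}$ as your prose says. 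With your stated multiplier and the true sign of the leading term, nothing cancels: $(1-p^{-a})(1-p^{-a}+O(p^{-1-\varepsilon}))=1-2p^{-a}+\cdots$, so the "decay rate matching $\zeta(2^{N+2}+2^{N+3}\gamma)^{-1}$" is not produced. The cancellation only works once you pin down both the sign and the coefficient of the leading deviation, which is precisely what you postpone as "bookkeeping" and what constitutes the paper's actual proof.

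Your base-case verification, which is where that coefficient should be established, is also off in its accounting. Writing $x=p^{-1-2\gamma}$, the bracket in \eqref{equ:conj1} for $(p,2\ell)=1$ equals $\frac{(\lambda_f(p)^2-1)x-x^{2}}{(1-\lambda_f(p)p^{-1/2-\gamma}+x)(1+\lambda_f(p)p^{-1/2-\gamma}+x)}$; the order-$x$ term $(\lambda_f(p)^2-1)x=\lambda_f(p^2)x$ is absorbed by the full $\operatorname{sym}^2$ Euler factor, whose $(1-p^{-1-2\gamma})^{-1}$ piece supplies the "$-1$" in $\lambda_f(p)^2-1$ — it is not $\zeta(2+4\gamma)^{-1}$ that accounts for a residual $-p^{-1-2\gamma}$. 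What one actually finds (and what the paper records) is $Z_{p}=L_{p}(1+2\gamma,\operatorname{sym}^2 f)\bigl(1-p^{-(2+4\gamma)}+O(p^{-1-\varepsilon})\bigr)$, so the factor matching $\zeta(2+4\gamma)^{-1}$ is the residual $-p^{-2-4\gamma}$. Once this local identity is proved, your induction (with the corrected multiplier $1+p^{-(2^{N+1}+2^{N+2}\gamma)}$, and with the $2$-adic factors of the zeta ratios assigned to the $p\mid 2\ell$ part of $Z^{N}$, which both you and the paper treat briefly) does run and reproduces the stated region and the $\ell^{\varepsilon}$ bound; but in its present form the key cancellation step would fail as described.
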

\begin{proof}
Recall the definition of $L(1+2\gamma, \operatorname{sym}^2 f)  Z (\tfrac{1}{2} + \gamma,\l) $ in \eqref{equ:LZ} and the expression of $Z_{p} (\tfrac{1}{2} + \gamma,\l)$ in \eqref{equ:conj1}--\eqref{equ:conj3}. 
It is clear that in the region $\Re(\gamma) > 0 $,  for $(p,2\l)=1$, 
\begin{equation}
\begin{split}
Z_{p} (\tfrac{1}{2} + \gamma,\l)
&= \left(1 - \frac{\L_f(p)}{p^{\frac{1}{2} + \gamma}}  + \frac{1}{p^{1+2\gamma}}\right)^{-1}
\left(1 + \frac{\L_f(p)}{p^{\frac{1}{2} + \gamma}}  + \frac{1}{p^{1+2\gamma}}\right)^{-1}
\\&\times \left( 1+ \frac{1}{p^{1 + 2 \gamma}}+ P(p,\gamma) \right),
\end{split}
\label{equ:Pgamma}
\end{equation}
where 
\begin{align*}
&P(p,\gamma)  \\
&= -\frac{1}{p+1} \left[ 1 + \frac{1}{p^{1+2\gamma}} - \left(1 - \frac{\L_f(p)}{p^{\frac{1}{2} + \gamma}}  + \frac{1}{p^{1+2\gamma}}\right)
\left(1 + \frac{\L_f(p)}{p^{\frac{1}{2} + \gamma}}  + \frac{1}{p^{1+2\gamma}}\right)
\right].
\end{align*}
We see that $P(p,\gamma)  = O (\frac{1}{p^{1+ \varepsilon}})$ when $\Re(\gamma) > -\frac{1}{2} + \varepsilon$.
We then factor out $1+ \frac{1}{p^{1 + 2 \gamma}}$ from the last brackets of \eqref{equ:Pgamma}. It gives
\begin{equation}
\begin{split}
1+ \frac{1}{p^{1 + 2 \gamma}}+ P(p,\gamma)
&= 
\frac{1}{ 1- \frac{1}{p^{1 + 2 \gamma}} }\left( 1- \frac{1}{p^{1 + 2 \gamma}}  \right)\left( 1+ \frac{1}{p^{1 + 2 \gamma}}+ P(p,\gamma) \right) \\
&= \frac{1}{ 1- \frac{1}{p^{1 +2 \gamma}} }  \left( 1- \frac{1}{p^{2 + 2^{2} \gamma}}   + P(p,\gamma) - \frac{1}{p^{1+ 2\gamma}} P(p,\gamma)\right).
\end{split}
\label{chapter1:Pgamma}
\end{equation}
Note  that $P(p,\gamma) - \frac{1}{p^{1+ 2\gamma}} P(p,\gamma)  = O (\frac{1}{p^{1+ \varepsilon}})$ when $\Re(\gamma) > -\frac{1}{2} + \varepsilon$.
One can check  that  the expressions $1- \frac{1}{p^{2 + 2^{2} \gamma}}   + P(p,\gamma) - \frac{1}{p^{1+ 2\gamma}} P(p,\gamma) $ are exactly the Euler factors of $Z (\tfrac{1}{2} + \gamma,\l)$ in the cases of $(p,2\l)=1$.

Factoring  out $1- \frac{1}{p^{2 + 2^{2} \gamma}} $ from \eqref{chapter1:Pgamma}, we obtain
\begin{align*}
&1- \frac{1}{p^{2 + 2^{2} \gamma}}   + P(p,\gamma) - \frac{1}{p^{1+ 2\gamma}} P(p,\gamma) \\
&= \frac{1}{1+ \frac{1}{p^{2 + 2^{2} \gamma}}} \left( 1+ \frac{1}{p^{2 + 2^{2} \gamma}}\right)
   \left(  1- \frac{1}{p^{2 + 2^{2} \gamma}}   + P(p,\gamma) - \frac{1}{p^{1+ 2\gamma}} P(p,\gamma) \right) \\
&=      \frac{1}{1+ \frac{1}{p^{2 + 2^{2} \gamma}}} 
          \left( 1- \frac{1}{p^{2^2 + 2^{3} \gamma}}+ P(p,\gamma) - \frac{1}{p^{1+2\gamma}} P(p,\gamma) + \frac{1}{p^{2+2\gamma}}P(p,\gamma)\right. \\
         &  \left. - \frac{1}{p^{3+6\gamma}} P(p,\gamma) \right).
\end{align*}
Note  that those terms involving $P(p,\gamma)$ above  are $O (\frac{1}{p^{1+ \varepsilon}})$ when $\Re(\gamma) > -\frac{1}{2} + \varepsilon$. 
Repeating the above process continuously we can get for $N\geq 1$,
\begin{align*}
& \frac{1}{1+ \frac{1}{p^{2 + 2^{2} \gamma}}} \\
&\times
          \left( 1- \frac{1}{p^{2^2 + 2^{3} \gamma}}+ P(p,\gamma) - \frac{1}{p^{1+2\gamma}} P(p,\gamma) + \frac{1}{p^{2+2\gamma}}P(p,\gamma) - \frac{1}{p^{3+6\gamma}} P(p,\gamma) \right)\\
&= 
\prod_{m=1}^N \frac{1}{1+ \frac{1}{p^{2^m + 2^{m+1} \gamma}}}\left( 1 - \frac{1}{p^{2^{N+1} + 2^{N+2} \gamma}}+ Q(p,\gamma) \right)\\
&= \frac{1- \frac{1}{p^{2+ 4\gamma}}}{1- \frac{1}{p^{2^{N+1} + 2^{N+2} \gamma}}}\left(  1- \frac{1}{p^{2^{N+1} + 2^{N+2} \gamma}}+ Q(p,\gamma) \right),
\end{align*}
where $Q(p,\gamma)$ can be written explicitly which satisfies $Q(p,\gamma) =O_{N,\varepsilon} (\frac{1}{p^{1+ \varepsilon}})$ when $\Re(\gamma) > -\frac{1}{2} + \varepsilon$. The last equation is obtained via multiplying by $ \frac{1- \frac{1}{p^{2+2^2 \gamma}}}{1- \frac{1}{p^{2 + 2^2 \gamma}}}$.

Note the expressions $1- \frac{1}{p^{2^{N+1} + 2^{N+2} \gamma}}+ Q(p,\gamma) $ are exactly the Euler factors of $Z^N(\tfrac{1}{2} + \gamma,\l)$ when $(p,2\l)=1$. For $\Re(\gamma) > \max ( - \frac{1}{2} + \varepsilon, - \frac{1}{2} + \frac{1}{2^{N+2}} + \frac{\varepsilon}{2^{N+2}} )$, we have 
\[
\prod_{(p,2\l)=1} \left( 1- \frac{1}{p^{2^{N+1} + 2^{N+2} \gamma}}+Q(p,\gamma) \right) \ll 1.
\]

In addition, it is easy to derive the Euler factors of $Z^N(\tfrac{1}{2} + \gamma,\l)$ corresponding to  $p|2\l$ and  prove that  they contribute  $ \ll \l^{\varepsilon}$, as desired.
\end{proof}

\section{Setup of the problem}
By \eqref{equ:M-l}  and Lemma \ref{lem:AFE},  we get
\begin{align*}
M(\a,\l) = M^+ (\a,\l) + M^-(\a,\l),
\end{align*}
where 
\begin{align*}
M^+ (\a,\l) &:= \sideset{}{^*}\sum_{(d,2)=1} \Phi\left(\frac{d}{X} \right)   \sum_{n=1}^\infty \frac{\L_f(n) \chi_{8d} ( \l n)}{n^{\frac{1}{2} + \a}} \omega_\a \left( \frac{n}{8d}\right),\\
M^- (\a,\l) &:= i^\kappa  \sideset{}{^*}\sum_{(d,2)=1} \Phi\left(\frac{d}{X} \right)  X_{\a,8d} \sum_{n=1}^\infty \frac{\L_f(n) \chi_{8d} ( \l n )}{n^{\frac{1}{2} - \a}} \omega_{-\a} \left( \frac{n}{8d}\right).
\end{align*}
\begin{remark} \label{rem:Phi}
Define $\Phi_z (x) := x^z \Phi(x)$. We then can write
\[
M^- (\a,\l) = i^\kappa \gamma_\alpha X^{-2\alpha} \sideset{}{^*}\sum_{(d,2)=1} \Phi_{-2\alpha} \left(\frac{d}{X} \right)   \sum_{n=1}^\infty \frac{\L_f(n) \chi_{8d} ( \l n)}{n^{\frac{1}{2} - \a}} \omega_{-\a} \left( \frac{n}{8d}\right),
\]
where 
\begin{align}
\gamma_\alpha := \left( \frac{8}{2\pi}\right)^{-2\alpha} \frac{\Gamma(\frac{\kappa}{2} - \alpha)}{\Gamma (\frac{\kappa}{2} + \alpha)} .
\label{gamma-alpha}
\end{align}
Notice that  $M^- (\a,\l)$ is equal to  $ i^\kappa \gamma_\a X^{-2\a} M^+ (-\a,\l)$ with $\Phi_{-2\a}$  in place of $\Phi$. Thus we just need to evaluate  $M^+ (\a,\l)$, and the results for $M^- (\a,\l)$ can be obtained immediately.
\end{remark}

The square-free condition in $M^+ (\a,\l) $ can be removed by using  M\"{o}bius inversion. This gives
\begin{equation}
\begin{split}
M^+ (\a,\l) &= \sum_{(d,2) =1} \sum_{a^2 |d} \mu(a)\Phi\left(\frac{d}{X} \right)   \sum_{n=1}^\infty \frac{\L_f(n) \chi_{8d} ( \l n )}{n^{\frac{1}{2} + \a}} \omega_\a \left( \frac{n}{8d}\right) \\
&= \sum_{(a,2\l) =1} \mu (a) \sum_{(d,2) =1} \sum_{(n,a)=1} \frac{\L_f (n) \chi_{8d} (\l n )}{n^{\frac{1}{2} + \a}} \omega_\a \left( \frac{n}{8a^2 d}\right) \Phi\left(\frac{a^2d}{X} \right)\\
&=: M_N^+ (\a,\l) + M_R^+ (\a,\l),
\end{split}
\label{equ:M1-M2}
\end{equation}
where $M_N^+ (\a,\l)$ and $ M_R^+ (\a,\l)$ denote the sums over $a \leq Y$ and $a>Y$, respectively. Here $Y (\leq X)$ is a parameter chosen later.

We  use the  Poisson summation formula to split $M_N^+ (\a,\l)$. Using  Lemma \ref{lem:Poisson} on the summation over $d$ in  $M_N^+ (\a,\l)$,  we derive
\begin{equation}
\begin{split}
M_N^+ (\a,\l)& = \frac{X}{2} \sum_{\substack{(a,2\l) =1\\ a \leq Y}} \frac{\mu(a)}{a^2} \sum_{(n,2a)=1} \frac{\L_f(n)}{n^{\frac{1}{2} + \a}} \sum_{k \in \mathbb{Z}} (-1)^k  \frac{G_k(\l n )}{\l n} \\
&\times \int_{-\infty}^{\infty} (\cos + \sin ) \left( \frac{2 \pi k x X}{2 n \l a^2}\right) \omega_\a \left( \frac{n}{8xX}\right) \Phi(x) dx.
\end{split}
\label{equ:M-1plus}
\end{equation}
Let $M_N^+ (\a,\l, k=0)$ denote the term $k=0$ above, and let $M_N^+ (\a,\l, k \neq 0)$ denote the remaining terms. We call $M_N^+ (\a,\l, k=0)$ diagonal terms and $M_N^+ (\a,\l, k \neq 0)$ off-diagonal terms.

On the other hand, we convert $M_R^+ (\a,\l)$ in \eqref{equ:M1-M2} back to the summation over square-free integers, and then appeal to  the induction hypothesis \eqref{assump}. To see this, recall that 
\begin{align*}
M_R^+ (\a,\l) = \sum_{\substack{(a,2\l) =1 \\ a >Y}} \mu (a) \sum_{(d,2) =1} \sum_{(n,a)=1} \frac{\L_f (n) \chi_{8d} (\l n )}{n^{\frac{1}{2} + \a}} \omega_\a \left( \frac{n}{8a^2 d}\right) \Phi\left(\frac{a^2d}{X} \right).
\end{align*}
Write $d = eb^2$, where $e$ is square-free and $b$ is positive. Group terms according to $c=ab$. It follows that
\begin{equation}
\begin{split}
M_R^+ (\a,\l) &= \sum_{(c,2\l)=1} \sum_{\substack{a>Y \\ a|c}} \mu (a)  \sideset{}{^*}\sum_{(e,2)=1} \sum_{(n,2c)=1} \frac{\L_f (n) \chi_{8e} (\l n)}{n^{\frac{1}{2} + \a}} \omega_\a \left( \frac{n}{8c^2e }\right) \Phi \left( \frac{c^2 e}{X}\right) \\
&= \sum_{(c,2\l)=1} \sum_{\substack{a>Y \\ a|c}} \mu (a)  \frac{1}{2 \pi i}  \int_{(1)}\sideset{}{^*}  \sum_{(e,2)=1} \chi_{8e} (\l)  \Phi_s \left( \frac{ e}{X'}\right)\\
& \times  L_c (\tfrac{1}{2} + \alpha +s, f \otimes \chi_{8e}) X^s 8^sg_\a (s)\frac{G(s)}{s} ds,
\end{split}
\label{equ:M2-1}
\end{equation}
where 
$X' : = \frac{X}{c^2}$.
Here $L_c (s, f \otimes \chi_{8e}),\  \Re(s)>1$ is given by the Euler product of $L (s, f \otimes \chi_{8e})$ with omitting all
prime factors of $c$. In the first equation above, the condition $(c,\l)=1$ is due to $\chi_{8ed^2} (\l) =0$ when $(d,\l) \neq 1$. We use the following lemma  to change $L_c (\tfrac{1}{2} + \alpha +s, f \otimes \chi_{8e}) $ back to the form of $L (\tfrac{1}{2} + \alpha +s, f \otimes \chi_{8e}) $. It is similar to  \cite[Lemma 9]{Kowalski-Michel} of Kowalski and Michel.
\begin{lem}\label{lem:local}
Let $d$ be a fundamental discriminant. Then
\begin{equation}
\begin{split}
&\prod_{p|c} \left( 1 - \frac{\L_f(p) \chi_d (p)}{p^s} + \frac{\chi_d (p^2)}{p^{2s}}\right) \\
&= \sum_{m|c}\sum_{n|c} \mu(m)\mu (mn)^2 \L_f (m) \chi_d(m) \chi_d (n^2)\frac{1}{m^s} \frac{1}{n^{2s}}.
\end{split}
\label{equ:local}
\end{equation}
\end{lem}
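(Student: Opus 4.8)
The plan is to prove \eqref{equ:local} by expanding the finite Euler product on the left-hand side, after first reducing to the case in which $c$ is squarefree. On the right-hand side, $\mu(m)\mu(mn)^2 = 0$ unless $m$ is squarefree and $mn$ is squarefree, i.e. unless $m$ and $n$ are squarefree and coprime; for such $m,n$ the divisibility conditions $m\mid c$, $n\mid c$ are equivalent to $m\mid \mathrm{rad}(c)$, $n\mid\mathrm{rad}(c)$ (where $\mathrm{rad}(c)$ is the product of the distinct primes dividing $c$), and the product on the left only involves the primes dividing $c$. Hence both sides are unchanged when $c$ is replaced by its squarefree kernel, and I may assume $c$ squarefree.

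With $c$ squarefree, I would expand $\prod_{p\mid c}\bigl(1 - \L_f(p)\chi_d(p)p^{-s} + \chi_d(p^2)p^{-2s}\bigr)$ by choosing, for each prime $p\mid c$, one of the three terms $1$, $-\L_f(p)\chi_d(p)p^{-s}$, $\chi_d(p^2)p^{-2s}$. Let $m$ be the product of the primes at which the middle term is chosen and $n$ the product of those at which the last term is chosen; then $m,n$ are squarefree, coprime, and divide $c$. Since $\L_f$, $\chi_d$ and $\mu$ are multiplicative and $\mu(p)=-1$, the term corresponding to a given choice equals $\mu(m)\L_f(m)\chi_d(m)\chi_d(n^2)m^{-s}n^{-2s}$, where I have used $\chi_d(n^2)=\prod_{p\mid n}\chi_d(p^2)$. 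Summing over all choices, the left-hand side of \eqref{equ:local} becomes
\[
\sum_{\substack{m\mid c,\ n\mid c\\ m,n\ \text{squarefree},\ (m,n)=1}} \mu(m)\,\L_f(m)\,\chi_d(m)\,\chi_d(n^2)\,\frac{1}{m^s}\,\frac{1}{n^{2s}}.
\]

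It then remains to recognise this sum as the right-hand side of \eqref{equ:local}. As noted above, $\mu(m)\mu(mn)^2$ vanishes unless $m$ and $n$ are squarefree and coprime, in which case it equals $\mu(m)$; inserting the factor $\mu(m)\mu(mn)^2$ therefore lets one drop the restrictions on the ranges of $m,n$ and sum freely over all divisors of $c$, which yields exactly the stated formula. Equivalently, one can observe that both sides of \eqref{equ:local} are multiplicative in $c$, so it suffices to check $c=p$: the left-hand side is $1-\L_f(p)\chi_d(p)p^{-s}+\chi_d(p^2)p^{-2s}$, while on the right the only nonzero contributions come from $(m,n)\in\{(1,1),(p,1),(1,p)\}$ --- the term $(p,p)$ dies since $\mu(p^2)^2=0$ --- and these three match term by term.

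There is essentially no obstacle here: the lemma is a formal manipulation of Dirichlet-series coefficients. The only point that needs a little care is the bookkeeping identifying the combinatorial constraint ``$m,n$ squarefree and coprime'' produced by expanding the product with the arithmetic weight $\mu(m)\mu(mn)^2$ appearing in the statement, together with the observation that both sides depend on $c$ only through $\mathrm{rad}(c)$.
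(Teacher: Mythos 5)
Your proposal is correct and is essentially the same argument as the paper's, just run in the opposite direction: the paper uses the joint multiplicativity of the summand to factor the right-hand side into an Euler product over $p\mid c$ and observes that the M\"obius factors kill all local terms except $(r_1,r_2)\in\{(0,0),(1,0),(0,1)\}$, which is exactly the local matching you obtain by expanding the product on the left. The reduction to squarefree $c$ is a harmless (and not strictly necessary) extra step, since squarefree $m,n$ with prime factors dividing $c$ automatically divide $c$.
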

\begin{proof}
Note that the summand on the right-hand side of \eqref{equ:local} is jointly multiplicative. Thus
\begin{align*}
&\sum_{m|c}\sum_{n|c} \mu(m)\mu (mn)^2 \L_f (m) \chi_d(m) \chi_d (n^2)\frac{1}{m^s} \frac{1}{n^{2s}} \\
&= \prod_{p|c} \sum_{0 \leq r_1,r_2 \leq \operatorname{order}_p(c)} \mu(p^{r_1})\mu (p^{r_1 + r_2})^2 \L_f (p^{r_1}) \chi_d(p^{r_1}) \chi_d (p^{2r_2})\frac{1}{p^{r_1s }} \frac{1}{p^{2r_2s}} \\
&= \prod_{p|c} \left( 1 - \frac{\L_f(p) \chi_d (p)}{p^s} + \frac{\chi_d (p^2)}{p^{2s}}\right),
\end{align*}
as desired.
\end{proof}
It follows from \eqref{equ:M2-1}  and Lemma \ref{lem:local}  that
\begin{align*}
M_R^+ (\a,\l) 
&= \sum_{(c,2\l)=1} \sum_{\substack{a>Y \\ a|c}} \mu (a)  \sum_{r_1 |c}  \frac{\mu(r_1)  \L_f (r_1) }{r_1^{\frac{1}{2} + \alpha }}   
\sum_{r_2 |c}   \frac{\mu( r_1 r_2)^2}{r_2^{1+ 2\a }}  \\
& \times \frac{1}{2 \pi i}  \int_{(\frac{1}{\log X})}
   \sideset{}{^*}\sum_{(e,2)=1} \chi_{8e} (\l_1r_1 \l_2^2r_2^2)  \Phi_s \left( \frac{ e}{X'}\right) L (\tfrac{1}{2} + \alpha +s, f \otimes \chi_{8e})\\
   & \times \frac{1}{r_1^sr_2^{2s}} X^s 8^sg_\a (s)\frac{G(s)}{s} ds. 
\end{align*}
We can truncate the above integral for $|\Im(s)| \ll (\log X)^2$ with an error $O(1)$ by the rapid decay of $|G(s)|$ as $|\Im(s)| \rightarrow \infty$.    For  $|\Im(s)| \ll (\log X)^2$, we are allowed to employ the inductive hypothesis \eqref{assump}. Hence  we have
\begin{align*}
M_R^+ (\a,\l) = M_{R,1}^+ (\a,\l)  + M_{R,2}^+ (\a,\l) + M_{R,3}^+ (\a,\l) +O(1),
\end{align*}
where 
\begin{align}
M_{R,1}^+ (\a,\l)
&:=\frac{1}{\l_1^{\frac{1}{2} + \alpha}} \sum_{(c,2\l)=1} \sum_{\substack{a>Y \\ a|c}} \mu (a)  \sum_{r_1 |c}  \frac{\mu(r_1)  \L_f (r_1) }{r_1^{1+ 2\alpha }}   
\sum_{r_2 |c}   \frac{\mu( r_1 r_2)^2}{r_2^{1+ 2\a }} \frac{1}{2 \pi i}  \int_{(\frac{1}{\log X})}  \nonumber \\
& \times      \frac{4X^{1+s} \tilde{\Phi}(1+s)}{\pi^2c^2} L(1+2\alpha+2s, \operatorname{sym}^2 f)  Z (\tfrac{1}{2} + \alpha+s,\l r_1 r_2^2 )\nonumber\\
& \times
\frac{8^s}{\l_1^sr_1^{2s}r_2^{2s}} g_\a (s)\frac{G(s)}{s} ds, \label{equ:M21}\\
M_{R,2}^+ (\a,\l) &:=\frac{ i^\kappa}{\l_1^{\frac{1}{2} - \alpha }} \sum_{(c,2\l)=1} \sum_{\substack{a>Y \\ a|c}} \mu (a)  \sum_{r_1 |c}  \frac{\mu(r_1)  \L_f (r_1) }{r_1}   
\sum_{r_2 |c}   \frac{\mu( r_1 r_2)^2}{r_2^{1+ 2\a}}\nonumber \\
& \times \frac{1}{2 \pi i}  \int_{(\frac{1}{\log X})}\frac{4X ^{1-2\a-s}\gamma_{\a +s}}{\pi^2c^{2-4\a -4s}} 
      \tilde{\Phi}(1-2\a-s)   \nonumber \\
      & \times L(1-2\alpha-2s, \operatorname{sym}^2 f)Z (\tfrac{1}{2} - \alpha-s,\l r_1r_2^2)
\frac{\l_1^s 8^s }{r_2^{2s}}  g_\a (s)\frac{G(s)}{s} ds, \label{equ:M22}\\
M_{R,3}^+ (\a,\l) 
&:= \sum_{(c,2\l)=1} \sum_{\substack{a>Y \\ a|c}} \mu (a)  \sum_{r_1 |c}  \frac{\mu(r_1)  \L_f (r_1) }{r_1^{\frac{1}{2} + \alpha }}   
\sum_{r_2 |c}   \frac{\mu( r_1 r_2)^2}{r_2^{1+ 2\a }} \frac{1}{2 \pi i}  \int_{\substack{s= \frac{1}{\log X} + it \\ |t| \ll (\log X)^2 }} \nonumber\\
& \times   
O\left( ( \l r_1r_2^2)^{\frac{1}{2} + \frac{\varepsilon}{100}} X'^{h+ \varepsilon}\right)
\frac{1}{r_1^sr_2^{2s}} X^s 8^sg_\a (s)\frac{G(s)}{s} ds.  \label{equ:M23}
\end{align}
Note that in \eqref{equ:M21} and \eqref{equ:M22} we  have extended the range of  integrals from $|\Im(s)| \ll (\log X)^2$ to  the vertical line $\Re(s) = \frac{1}{\log X}$  with an error $O(1)$.

Now we have separated $M^+ (\a,\l)$ into several parts. In summary, we have obtained
\begin{align}
M(\a,\l) = M^+ (\a,\l) + M^-(\a,\l),
\label{relation1}
\end{align} 
and
\begin{equation}
\begin{split}
M^{+} (\a,\l) &= M_N^{+} (\a,\l) + M_R^{+} (\a,\l),\\
M_N^{+} (\a,\l) &=  M_N^{+} (\a,\l, k=0) +  M_N^{+} (\a,\l, k\neq 0),\\
M_R^{+} (\a,\l) &= M_{R,1}^{+} (\a,\l)  + M_{R,2}^{+} (\a,\l) + M_{R,3}^{+} (\a,\l) +O(1).
\end{split}
\label{relation2}
\end{equation}
We can also split $M^-(\a,\l)$ similarly using Remark \ref{rem:Phi}. We will evaluate $M_N^{+} (\a,\l, k=0)$, $M_N^{+} (\a,\l, k\neq 0)$, respectively,  in Sections \ref{sec:k=0} and  \ref{sec:knot=0}. 
The analysis for $M_{R,1}^{+} (\a,\l)$,  $M_{R,2}^{+} (\a,\l)$ and  $M_{R,3}^{+} (\a,\l)$ will be done in Section \ref{sec:Error}.  We complete the proof of Theorem \ref{thm-assump}  in Section \ref{sec:comp}.

\section{Evaluation of $M_N^+ (\a,\l, k=0)$  \label{sec:k=0}}
Recall $M_N^+ (\a,\l, k=0)$ in \eqref{equ:M-1plus}.  By the definition of  $G_k(n)$ in Lemma \ref{lem:Poisson}, we know $G_0(n) =\phi (n)$ if $n=\Box$, and $G_0(n) =0$ otherwise. Here $n=\Box$ means $n$ is a square number.   Hence
\begin{equation}
\begin{split}
&M_N^+ (\a,\l, k=0) \\
&= \frac{X}{2} \sum_{\substack{(a,2\l)  =1 \\ a \leq Y}}  \frac{\mu(a)}{a^2} \sum_{\substack{(n,2a)=1 \\ \l n = \Box}} \frac{\L_f(n)}{n^{\frac{1}{2} + \a}} \frac{\phi(\l n )}{\l n} \int_{-\infty}^{\infty} \omega_\a \left( \frac{n}{8xX}\right) \Phi(x) dx\\
&= \frac{X}{2} \sum_{\substack{(a,2\l)  =1 \\ a \leq Y}}  \frac{\mu(a)}{a^2} \frac{1}{2\pi i} \int_{(1)} \tilde{\Phi}(s+1) Z_1( \tfrac{1}{2} + \a + s,a,\l)  8^s  X^s  g_\a (s)  \frac{G(s)}{s} ds,
\end{split}
\label{equ:M_1plus-k=0-1}
\end{equation}
where 
\[
Z_1(\tfrac{1}{2} +\gamma ,a,\l)  := \sum_{\substack{(n,2a)=1 \\ \l n = \Box}} \frac{\L_f(n)}{n^{\frac{1}{2}+\gamma}} \frac{\phi(\l n )}{\l n} .
\]

For simplicity we use $E_1(\gamma;p),E_2(\gamma;p),E_3(\gamma;p)$ to denote  the three  Euler factors in \eqref{equ:conj1}, \eqref{equ:conj2}, \eqref{equ:conj3}, respectively. For $\Re(\gamma)>0$, write
\begin{align}
\mathcal{A}(\gamma, a,\l) : =\prod_{p |\l_1}   E_1 (\gamma ; p)   
\prod_{\substack{p \nmid \l_1 \\ p |\l_2}} E_2 (\gamma; p) 
\prod_{(p,2a \l)=1}\left(  E_3 (\gamma; p) + \frac{1}{p^2 -1} \right).
\label{def:A}
\end{align}
\begin{lem}
For $\Re(\gamma)>0$, we have
\begin{align*}
Z_1(\tfrac{1}{2} + \gamma,a,\l) = \frac{1}{\l_1^{\frac{1}{2}+\gamma} \zeta_{2a}(2)} \mathcal{A}(\gamma, a,\l).
\end{align*}
\label{lem:Z}
\end{lem}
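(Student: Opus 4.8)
The plan is to compute the Dirichlet series $Z_1(\tfrac12+\gamma,a,\l)$ directly by exploiting the constraint $\l n=\Box$ together with multiplicativity. Writing $\l=\l_1\l_2^2$ with $\l_1$ squarefree, the condition $\l n=\Box$ with $(n,2a)=1$ forces $n=\l_1 m^2$ for some integer $m$ with $(m,2a)=1$ and $(m,\l_1)$ unrestricted; indeed $\l n = \l_1^2\l_2^2 m^2\cdot(\l_1^{-1}\cdot\text{stuff})$—more precisely one checks that $n$ must supply exactly one factor of each prime dividing $\l_1$ and the rest of $n$ must be a perfect square coprime to $2a$. So the first step is to substitute $n=\l_1 m^2$ and rewrite
\[
Z_1(\tfrac12+\gamma,a,\l)=\sum_{\substack{(m,2a)=1}}\frac{\L_f(\l_1 m^2)}{(\l_1 m^2)^{\frac12+\gamma}}\,\frac{\phi(\l_1\l_2^2 m^2)}{\l_1\l_2^2 m^2}.
\]
Pulling out the factor $\l_1^{-(\frac12+\gamma)}$ and noting $\phi$ is multiplicative, the sum factors as an Euler product over primes $p$; primes dividing $\l_1$, primes dividing $\l_2$ but not $\l_1$, and primes coprime to $2a\l$ each contribute a distinct local factor, while primes dividing $a$ contribute nothing beyond the constraint $(m,2a)=1$ and $p=2$ is excluded.

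The second step is to identify each local factor. For $p\mid\l_1$ the exponent of $p$ in $n$ is $2k+1$ as $k$ runs over $\ge 0$, so the local sum is $\sum_{k\ge0}\L_f(p^{2k+1})p^{-(2k+1)(\frac12+\gamma)}$ times the $\phi$-contribution $p/(p+1)$ (the latter being $\phi(p^j)/p^j$ for $j\ge1$). Using the Hecke relation, $\sum_{k\ge0}\L_f(p^{2k+1})x^{2k+1}$ is the odd part of $\sum_{j\ge0}\L_f(p^j)x^j=(1-\L_f(p)x+x^2)^{-1}$, which equals $\tfrac12[(1-\L_f(p)x+x^2)^{-1}-(1+\L_f(p)x+x^2)^{-1}]$ with $x=p^{-(\frac12+\gamma)}$, and $x^2=p^{-(1+2\gamma)}$. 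This is precisely $p^{-(\frac12+\gamma)}E_1(\gamma;p)/(\text{the pulled-out }p^{\frac12+\gamma})$—i.e. it matches $E_1(\gamma;p)$ after accounting for the global $\l_1^{-(\frac12+\gamma)}$. For $p\mid\l_2$, $p\nmid\l_1$ the exponent of $p$ in $n$ is $2k$ with $k\ge0$, giving the even part $\tfrac12[(\cdots)^{-1}+(\cdots)^{-1}]$ times $\phi(p^{\ge1})/p^{\ge1}=p/(p+1)$ when $k\ge1$ and $1$ when $k=0$; here one must be slightly careful that the $k=0$ term has $\phi$-factor coming from $\l_2^2$ only, which is still $p/(p+1)$ since $p\mid\l_2$—so the whole local factor is $\frac{p}{p+1}\cdot\tfrac12[(\cdots)^{-1}+(\cdots)^{-1}]=E_2(\gamma;p)$. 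For $(p,2a\l)=1$ the exponent is $2k$, $k\ge0$; the $k=0$ term contributes $1$ with $\phi$-factor $1$, and $k\ge1$ contributes $\frac{p}{p+1}$ times the even part minus its own $k=0$ piece, giving $E_3(\gamma;p)$; but in $\mathcal A$ we have $E_3(\gamma;p)+\frac{1}{p^2-1}$ rather than $E_3(\gamma;p)$.

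The third step reconciles this discrepancy with the factor $\zeta_{2a}(2)^{-1}=\prod_{(p,2a)=1}(1-p^{-2})$. The point is that $E_3(\gamma;p)+\frac1{p^2-1}$ is what you get for $p\nmid\l$ (and $p\nmid 2a$) when you include the full even-exponent sum \emph{including} the $\phi$-correction written as a clean closed form; multiplying by $(1-p^{-2})=\frac{p^2-1}{p^2}$ collapses $E_3(\gamma;p)+\frac1{p^2-1}$ back to the actual local Euler factor of $Z_1$. Concretely, one verifies the identity $\big(1-\tfrac1{p^2}\big)\big(E_3(\gamma;p)+\tfrac1{p^2-1}\big)=1+\tfrac{p}{p+1}\big[\tfrac12(1-\tfrac{\L_f(p)}{p^{1/2+\gamma}}+\tfrac1{p^{1+2\gamma}})^{-1}+\tfrac12(1+\tfrac{\L_f(p)}{p^{1/2+\gamma}}+\tfrac1{p^{1+2\gamma}})^{-1}-1\big]$ which is exactly the $p$-local factor of $Z_1$ for $(p,2a\l)=1$. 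Assembling all three families of primes, multiplying by the global $\l_1^{-(\frac12+\gamma)}$ and by $\zeta_{2a}(2)^{-1}$, and invoking the absolute convergence for $\Re(\gamma)>0$ (which is where $|\L_f(p)|\le 2$ and the geometric series converge), yields the claimed formula $Z_1(\tfrac12+\gamma,a,\l)=\l_1^{-(\frac12+\gamma)}\zeta_{2a}(2)^{-1}\mathcal A(\gamma,a,\l)$.

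I expect the main obstacle to be bookkeeping rather than anything deep: getting the $\phi$-factors exactly right across the $k=0$ versus $k\ge1$ boundary for each of the three prime classes, and—more importantly—correctly pinning down why the extra $\frac1{p^2-1}$ appears in the definition \eqref{def:A} of $\mathcal A$ and how it is annihilated by the $\zeta_{2a}(2)^{-1}$ factor. The cleanest route is to compute the genuine $p$-local Euler factor of $Z_1$ for $(p,2a\l)=1$ from scratch, observe it equals $(1-p^{-2})\cdot(E_3(\gamma;p)+\frac1{p^2-1})$ by direct algebra, and then the global statement is immediate. The cases $p\mid\l_1$ and $p\mid\l_2$ are then routine matches against $E_1,E_2$.
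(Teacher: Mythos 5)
Your route is the same as the paper's: substitute $n=\l_1m^2$, factor $Z_1$ into an Euler product, use the even/odd parts of $\sum_{r\ge0}\lambda_f(p^r)x^r=(1-\lambda_f(p)x+x^2)^{-1}$, and absorb $\prod_{(p,2a)=1}(1-p^{-2})=\zeta_{2a}(2)^{-1}$. However, the bookkeeping of the $\phi$-factors, which you yourself flag as the crux, is done incorrectly, and it makes your key verification identity false. For $j\ge1$ one has $\phi(p^j)/p^j=1-\tfrac1p$, \emph{not} $\tfrac{p}{p+1}$. Hence the genuine local factors of $\l_1^{\frac12+\gamma}Z_1$ are $p^{\frac12+\gamma}\bigl(1-\tfrac1p\bigr)\bigl[\tfrac12(\cdots)^{-1}-\tfrac12(\cdots)^{-1}\bigr]=(1-p^{-2})E_1(\gamma;p)$ for $p\mid\l_1$, $\bigl(1-\tfrac1p\bigr)\bigl[\tfrac12(\cdots)^{-1}+\tfrac12(\cdots)^{-1}\bigr]=(1-p^{-2})E_2(\gamma;p)$ for $p\mid\l_2$, $p\nmid\l_1$, and $1+\bigl(1-\tfrac1p\bigr)\bigl[\tfrac12(\cdots)^{-1}+\tfrac12(\cdots)^{-1}-1\bigr]=(1-p^{-2})\bigl(E_3(\gamma;p)+\tfrac1{p^2-1}\bigr)$ for $(p,2a\l)=1$; they are not $E_1$, $E_2$, $E_3$ as you assert in your second step. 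Correspondingly, the identity you propose to check in your third step, namely $(1-p^{-2})\bigl(E_3(\gamma;p)+\tfrac1{p^2-1}\bigr)=1+\tfrac{p}{p+1}\bigl[\tfrac12(\cdots)^{-1}+\tfrac12(\cdots)^{-1}-1\bigr]$, is false as written: its right-hand side is exactly $E_3(\gamma;p)$, and the equation would force $E_3(\gamma;p)=1$.

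A related structural problem: you extract the factor $1-p^{-2}$ only at primes with $(p,2a\l)=1$, which would produce $\zeta_{2a\l}(2)^{-1}$ rather than the required $\zeta_{2a}(2)^{-1}$. The correct mechanism, and the one used in the paper, is the single relation $1-\tfrac1p=(1-p^{-2})\tfrac{p}{p+1}$ applied at \emph{every} odd prime not dividing $a$, i.e.\ also at $p\mid\l_1$ and at $p\mid\l_2$, $p\nmid\l_1$: the three local sums are exactly \eqref{equ:Z1-1}--\eqref{equ:Z1-3}, and pulling $1-p^{-2}$ out of each of them over all $(p,2a)=1$ (the local factor at $p\mid a$ being trivially $1$) yields $\zeta_{2a}(2)^{-1}\mathcal{A}(\gamma,a,\l)$, with the $r=0$ term at $(p,2a\l)=1$ producing the $+\tfrac1{p^2-1}$ in \eqref{def:A}. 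With the $\phi$-factor corrected in this way, the rest of your argument (the substitution $n=\l_1m^2$, the odd/even splitting of the Hecke generating series, and convergence for $\Re(\gamma)>0$) goes through and coincides with the paper's proof.
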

\begin{proof}
For each prime, let $b_1, b_2$ be integers such that $p^{b_1} || \l_1$ and $p^{b_2} || \l_2$.
Change the variable $n \rightarrow \l_1 n^2$. We can do this because $\l n = \Box$ implies $\l_1n = \Box$. It gives
\begin{align*}
Z_1(\tfrac{1}{2} + \gamma,a,\l)
 &= \frac{1}{\l_1^{\frac{1}{2} +\gamma}}\sum_{(n,2a)=1} \frac{\L_f (\l_1  n^2)}{n^{1+2\gamma}} \prod_{p | \l_1\l_2 n} \left( 1 - \frac{1}{p}\right)\\
 &=\frac{1}{\l_1^{\frac{1}{2} + \gamma}} \prod_{(p,2a) =1} \sum_{r=0}^{\infty} \frac{\L_f (p^{b_1 + 2r})}{p^{(1+2\gamma)r}} \prod_{q| p^{b_1 + b_2 +  r}}\left( 1 - \frac{1}{q}\right).
\end{align*}
In the following, we consider three cases for the sum over $r$ above. 

If $(p,2a\l) =1$, then $b_1=b_2 =0$. Thus
\begin{equation}
\begin{split}
&\sum_{r=0}^{\infty} \frac{\L_f (p^{ 2r})}{p^{(1+ 2\gamma)r}} \prod_{q| p^{  r}}\left( 1 - \frac{1}{q}\right) \\
&=1 +  \left( 1 - \frac{1}{p}\right)  \sum_{r=1}^{\infty} \frac{\L_f (p^{ 2r})}{p^{(\frac{1}{2}+ \gamma)2r}} \\
&= 1 + \left( 1- \frac{1}{p}\right)\\
&\times \left[ \frac{1}{2} \left( 1- \frac{\L_f (p)}{p^{\frac{1}{2} + \gamma}} + \frac{1}{p^{1+2\gamma}}\right) ^{-1} + \frac{1}{2 } \left( 1+ \frac{\L_f (p)}{p^{\frac{1}{2} + \gamma}}+ \frac{1}{p^{1+2\gamma }}\right) ^{-1} -1\right].
\end{split}
\label{equ:Z1-1}
\end{equation}

If $(p,2a) = 1, p|\l_1$, then $b_1 = 1$ since $\l_1$ is square-free. Hence 
\begin{equation}
\begin{split}
&\sum_{r=0}^{\infty} \frac{\L_f (p^{1 + 2r})}{p^{(1+2\gamma)r }} \left( 1 - \frac{1}{p}\right) \\
&= p^{\frac{1}{2} + \gamma}\left( 1 - \frac{1}{p}\right) \sum_{r=0}^{\infty} \frac{\L_f (p^{1 + 2r})}{p^{(\frac{1}{2}+\gamma ) (1+2r)}}    \\
&=  p^{\frac{1}{2} + \gamma} \left( 1- \frac{1}{p}\right)\\
&\times  \left[ \frac{1}{2} \left( 1- \frac{\L_f (p)}{p^{\frac{1}{2} + \gamma}} + \frac{1}{p^{1+2\gamma}}\right) ^{-1}  - \frac{1}{2 } \left( 1+ \frac{\L_f (p)}{p^{\frac{1}{2} + \gamma}}+ \frac{1}{p^{1+2\gamma}}\right) ^{-1} \right].
\end{split}
\label{equ:Z1-2}
\end{equation}

If $(p,2a) = 1, p\nmid \l_1, p|\l_2$, then $b_1 = 0, b_2 \geq 1$. This gives 
\begin{equation}
\begin{split}
&\sum_{r=0}^{\infty} \frac{\L_f (p^{ 2r})}{p^{(1+2\gamma)r }} \left( 1 - \frac{1}{p}\right) 
\\
&=  
\left( 1- \frac{1}{p}\right)\left[ \frac{1}{2} \left( 1- \frac{\L_f (p)}{p^{\frac{1}{2} + \gamma}} + \frac{1}{p^{1+2\gamma}}\right) ^{-1} + \frac{1}{2 } \left( 1+ \frac{\L_f (p)}{p^{\frac{1}{2} +\gamma}}+ \frac{1}{p^{1+2\gamma}}\right) ^{-1} \right].
\end{split}
\label{equ:Z1-3}
\end{equation}

We then complete the proof  by taking out the factor $1- \frac{1}{p^2}$ from \eqref{equ:Z1-1}, \eqref{equ:Z1-2} and \eqref{equ:Z1-3}, and recalling the definition of $\mathcal{A}(\gamma, a,\l)$ in \eqref{def:A}. 
\end{proof}

It follows from \eqref{equ:M_1plus-k=0-1} and Lemma \ref{lem:Z} that 
\begin{lem}\label{lem:M1}
We have
\begin{align*}
&M_N^+ (\a,\l, k=0) \\
&= \frac{4X}{\pi^2 \l_1^{\frac{1}{2} +\a} } \sum_{\substack{(a,2\l) =1\\ a \leq Y}} \frac{\mu(a)}{a^2} \prod_{p|a} \frac{1}{1-\frac{1}{p^2}}  \frac{1}{2\pi i} \int_{(1)} \tilde{\Phi}(s+1)  
 \mathcal{A}( s + \a, a,\l) \frac{1}{\l_1^s} 8^s  X^s\\
 &\times   g_\a (s)  \frac{G(s)}{s} ds.
\end{align*}
\end{lem}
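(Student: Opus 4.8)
The plan is to derive Lemma \ref{lem:M1} by substituting the closed-form evaluation of $Z_1$ from Lemma \ref{lem:Z} into the contour-integral expression for $M_N^+(\a,\l,k=0)$ obtained in \eqref{equ:M_1plus-k=0-1}, and then extracting all the $a$-independent arithmetic factors. Starting from
\[
M_N^+ (\a,\l, k=0) = \frac{X}{2} \sum_{\substack{(a,2\l)=1\\ a \leq Y}} \frac{\mu(a)}{a^2}\,\frac{1}{2\pi i}\int_{(1)} \tilde{\Phi}(s+1)\, Z_1(\tfrac{1}{2}+\a+s,a,\l)\, 8^s X^s g_\a(s)\frac{G(s)}{s}\,ds,
\]
I would apply Lemma \ref{lem:Z} with $\gamma = \a+s$ (valid on the line $\Re(s)=1$, where $\Re(\gamma)>0$ holds under the hypotheses on $\a$), giving $Z_1(\tfrac12+\a+s,a,\l) = \l_1^{-(1/2+\a+s)}\zeta_{2a}(2)^{-1}\mathcal{A}(\a+s,a,\l)$. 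This immediately produces the prefactor $\l_1^{-1/2-\a}$ and the factor $\l_1^{-s}$ inside the integral, matching the shape of the claimed formula.

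The remaining bookkeeping is the treatment of $\zeta_{2a}(2)^{-1}$. Here $\zeta_{2a}(2) = \zeta(2)\prod_{p\mid 2a}(1-p^{-2})$, so $\zeta_{2a}(2)^{-1} = \zeta(2)^{-1}(1-2^{-2})^{-1}\prod_{p\mid a}(1-p^{-2})^{-1}$ once one uses $(a,2)=1$ to separate the prime $2$. Then $\zeta(2)^{-1}(1-1/4)^{-1} = \frac{6}{\pi^2}\cdot\frac{4}{3} = \frac{8}{\pi^2}$, and combined with the overall $\frac{X}{2}$ this yields the constant $\frac{4X}{\pi^2}$ appearing in Lemma \ref{lem:M1}, while the local factors $\prod_{p\mid a}(1-p^{-2})^{-1}$ remain inside the sum over $a$ exactly as stated. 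Collecting everything gives precisely the asserted identity.

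There is no real obstacle here: the statement is an immediate algebraic consequence of Lemma \ref{lem:Z} together with the elementary factorization of $\zeta_{2a}(2)$, and no analytic work (contour shifting, convergence estimates) is needed beyond what already justifies \eqref{equ:M_1plus-k=0-1}. The one point deserving a word of care is the domain of validity: Lemma \ref{lem:Z} is stated for $\Re(\gamma)>0$, and on the contour $\Re(s)=1$ we have $\Re(\a+s) = 1 + \Re(\a) > 0$ since $|\Re(\a)| \ll 1/\log X$, so the substitution is legitimate term-by-term under the integral sign; absolute convergence of the $a$-sum for $a\le Y$ is trivial as it is a finite sum. Thus the proof is essentially a one-line substitution followed by the constant computation $\frac{1}{2}\cdot\zeta(2)^{-1}(1-1/4)^{-1} = \frac{4}{\pi^2}$, and I would present it as such.
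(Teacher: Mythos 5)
Your proposal is correct and follows exactly the paper's route: the paper also derives Lemma \ref{lem:M1} by inserting Lemma \ref{lem:Z} (with $\gamma=\a+s$) into \eqref{equ:M_1plus-k=0-1}, the only bookkeeping being the factorization $\zeta_{2a}(2)^{-1}=\frac{6}{\pi^2}\cdot\frac{4}{3}\prod_{p\mid a}(1-p^{-2})^{-1}$ turning $\frac{X}{2}$ into $\frac{4X}{\pi^2}$. Your constant computation and the remark on the domain $\Re(\a+s)>0$ are both accurate.
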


\section{Upper bound for  $M_N^+ (\a,\l, k\neq 0)$ \label{sec:knot=0}}

We shall prove  an upper bound for $M_N^{+} (\a,\l, k\neq 0)$ in this section.
Recall in \eqref{equ:M-1plus} that 
\begin{equation}
\begin{split}
M_N^+ (\a,\l, k\neq 0) &= \frac{X}{2} \sum_{\substack{(a,2\l) =1\\ a \leq Y}}  \frac{\mu(a)}{a^2} \sum_{(n,2a)=1} \frac{\L_f(n)}{n^{\frac{1}{2} + \a}} \sum_{k \neq 0} (-1)^k  \frac{G_k(\l n )}{\l n} \\
& \times \int_{-\infty}^{\infty} (\cos + \sin ) \left( \frac{2 \pi k x X}{2 n \l a^2}\right) \omega_\a \left( \frac{n}{8xX}\right) \Phi(x) dx.
\end{split}
\label{qu:M_1plus-k!=0-1}
\end{equation}

\begin{lem}
Let $f(x)$ be a smooth function on $\mathbb{R}_{> 0}$. Suppose $f$ decays rapidly as $x \rightarrow \infty$, and $f^{(n)}(x)$ converges as $x \rightarrow 0^+$ for every $n\in \mathbb{Z}_{\geq 0}$.
Then we have 
\begin{equation}
\int_0^{\infty} f(x)\cos (2\pi xy) dx=\frac{1}{2\pi i} \int_{(\frac{1}{2})} \tilde{f}(1-u) \Gamma(u) \cos \left(\frac{\operatorname{sgn}(y)\pi u}{2}\right) (2\pi|y|)^{-u}du.
\label{equ:cos-sin-2}
\end{equation}
In addition, the equation \eqref{equ:cos-sin-2} is also valid when  $\cos$ is replaced by $\sin$.
\label{lem:mellin}
\end{lem}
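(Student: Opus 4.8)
The plan is to prove the Mellin--cosine/sine formula \eqref{equ:cos-sin-2} by reducing it to the classical Mellin transform of $\cos$ and $\sin$, combined with the Mellin convolution theorem. First I would recall the standard gamma-function identities: for $0 < \Re(u) < 1$ one has $\int_0^\infty \cos(t)\, t^{u-1}\, dt = \Gamma(u)\cos(\pi u/2)$ and $\int_0^\infty \sin(t)\, t^{u-1}\, dt = \Gamma(u)\sin(\pi u/2)$. These are the Mellin transforms of $\cos$ and $\sin$; they converge conditionally in the stated strip (and can be justified by contour rotation from the Mellin transform of $e^{-t}$, i.e.\ by rotating the ray of integration from the positive reals to the imaginary axis, using that $\Gamma(u)$ has no poles and the arc contributions vanish for $0<\Re u<1$). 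Writing $t = 2\pi x |y|$ and tracking the sign of $y$ through $\cos$ (even) versus $\sin$ (odd) produces the factor $\cos(\operatorname{sgn}(y)\pi u/2)$ in both cases — note that for $\cos$ the sign is irrelevant, while for $\sin$ the two occurrences of $\operatorname{sgn}(y)$ (one from $\sin$ being odd, one from the Mellin transform picking up $\sin(\pi u/2)$ versus needing $\cos$) combine so that the final kernel is $\cos(\operatorname{sgn}(y)\pi u/2)$ in the displayed identity. (One should double-check this bookkeeping against the statement; the point is that the right-hand kernel as written is $\cos$, and the $\operatorname{sgn}(y)$ is only visible after the trigonometric addition formula is applied.)

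Next I would invoke the Mellin inversion/convolution: if $f$ has Mellin transform $\tilde f(s)$ analytic in a suitable strip, then $\int_0^\infty f(x)\, k(xy)\, dx$ can be written as $\frac{1}{2\pi i}\int_{(c)} \tilde f(1-u)\, \widetilde{k}(u)\, |y|^{-u}\, du$, where $\widetilde k$ is the Mellin transform of the kernel $k$. Concretely, substitute the Mellin--Barnes representation of $\cos(2\pi xy)$ (valid pointwise, or after a suitable averaging, for $x>0$) into $\int_0^\infty f(x)\cos(2\pi xy)\,dx$, then interchange the order of integration. The interchange is the only genuinely delicate point, since the Mellin integral of $\cos$ converges only conditionally; I would justify it either by inserting a convergence factor $e^{\delta s^2}$ (or working first with $f$ replaced by $f(x)e^{-\epsilon x}$) and letting $\delta\to 0$, or by integrating by parts in $x$ once or twice to gain absolute convergence — the hypotheses on $f$ (rapid decay at $\infty$, all derivatives converging at $0^+$) are exactly what is needed to shift the contour $\Re(u)=\tfrac12$ and to perform these integrations by parts with no boundary terms. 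After the interchange, the inner $x$-integral is $\int_0^\infty f(x) x^{-u}\,dx = \tilde f(1-u)$, and the kernel integral contributes $\Gamma(u)\cos(\operatorname{sgn}(y)\pi u/2)(2\pi|y|)^{-u}$, giving \eqref{equ:cos-sin-2}. The $\sin$ case is identical, using the companion Mellin transform and the oddness of $\sin$, and one checks the resulting kernel again collapses to $\cos(\operatorname{sgn}(y)\pi u/2)$ as claimed (equivalently, one may state and prove the $\sin$ version with a $\sin$ kernel and then note the two are interchangeable in the application because of how they enter $(\cos+\sin)$ together).

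The main obstacle will be justifying the interchange of the $x$-integral and the Mellin--Barnes $u$-integral, because the $u$-integral representation of $\cos$ does not converge absolutely on the line $\Re(u)=\tfrac12$; everything else is a routine application of the gamma-function Mellin transforms and Fubini. I expect the cleanest route is to regularize (multiply the kernel's Mellin transform by $e^{\delta u^2}$, which is harmless since we may later move the contour and the factor is $1$ on the real axis and decays in the imaginary direction), interchange legitimately, evaluate, and then remove the regularization by dominated convergence using the rapid decay of $\tilde f(1-u)$ in vertical strips — which follows from repeated integration by parts in $x$, i.e.\ from the smoothness and decay hypotheses on $f$. Alternatively one can avoid regularization entirely by first establishing the identity for $\Re(u)$ in the strip where all integrals converge absolutely (e.g.\ after one integration by parts, which trades $f$ for $f'$ and shifts the effective strip), then analytically continuing in the contour position. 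I would present the regularization argument as it is the shortest, relegating the convergence estimates to a remark that they follow verbatim from the stated hypotheses on $f$.
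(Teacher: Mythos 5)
Your overall route is the right one and is essentially the standard argument: the paper itself gives no proof of Lemma \ref{lem:mellin}, merely citing \cite[Section 3.3]{Sound-Young}, and the argument there is what you describe --- represent the trigonometric kernel through its Mellin transform $\int_0^\infty \cos(t)\,t^{u-1}\,dt=\Gamma(u)\cos(\pi u/2)$ (resp.\ the companion formula for $\sin$), interchange the $x$- and $u$-integrals, and identify the inner integral as $\tilde f(1-u)$. Your proposals for justifying the interchange (a regularizing factor, or integration by parts in $x$ to gain decay of $\tilde f(1-u)$ on the line $\Re(u)=\tfrac12$) are standard and adequate for the functions to which the lemma is applied.

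However, your bookkeeping in the $\sin$ case is wrong as written. You claim the kernel ``collapses to $\cos(\operatorname{sgn}(y)\pi u/2)$'' in both cases; it does not. Writing $\sin(2\pi xy)=\operatorname{sgn}(y)\sin(2\pi x|y|)$ and using $\int_0^\infty\sin(t)\,t^{u-1}\,dt=\Gamma(u)\sin(\pi u/2)$, the resulting kernel is $\operatorname{sgn}(y)\sin(\pi u/2)=\sin(\operatorname{sgn}(y)\pi u/2)$; that is, in \eqref{equ:cos-sin-2} the cosine must be replaced by sine \emph{on both sides}, which is exactly what the lemma asserts. The distinction is material in the application in Section \ref{sec:knot=0}: adding the two cases produces $\cos(\pi u/2)+\operatorname{sgn}(y)\sin(\pi u/2)$, i.e.\ the factor $(\cos+\operatorname{sgn}(k)\sin)(\pi u/2)$, whereas your version would yield $2\cos(\pi u/2)$. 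Your parenthetical fallback (``state and prove the $\sin$ version with a $\sin$ kernel'') is the correct resolution, so the fix is simply to delete the erroneous claim and carry the sign through as above. A minor further caveat: obtaining decay of $\tilde f(1-u)$ in $|\Im(u)|$ by repeated integration by parts uses decay of derivatives of $f$ at infinity, slightly more than the stated hypotheses, though it holds for the $f$ arising in the paper.
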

\begin{proof}
See \cite[Section 3.3]{Sound-Young}.
\end{proof}

By  Lemma \ref{lem:mellin}, the integral in \eqref{qu:M_1plus-k!=0-1} is 
\begin{align*}
&\frac{1}{2\pi i} \int_{(\frac{1}{2})} X^{-u}\Gamma(u) (\cos + \operatorname{sgn} (k) \sin ) \left( \frac{\pi u}{2}\right)  \left( \frac{\l na^2}{\pi |k|} \right)^u \\
& \times \int_0^\infty \Phi(x) x^{-u} \omega_\alpha\left( \frac{n}{8xX}\right) dx du \\
&=\frac{1}{(2\pi i)^2} \int_{(\frac{1}{2})} \int_{(1)} \tilde{\Phi}(1+s-u) X^{-u+s}  \Gamma(u)(\cos + \operatorname{sgn} (k) \sin ) \left( \frac{\pi u}{2}\right) \left(\frac{\l a^2}{\pi |k|} \right)^u \\
& \times {8^s}\frac{1}{n^{s-u}}g_\alpha(s)  \frac{G(s)}{s} ds du.
 \end{align*}
Move the contour of the above integral to $\Re(u) = \frac{1}{2} + \varepsilon, \Re(s) = \frac{1}{2} + 2\varepsilon$, and  change the variable $s'= s - u$. This implies
\begin{align*}
&\frac{1}{(2\pi i)^2} \int_{(\frac{1}{2}+ \varepsilon)} \int_{(\varepsilon)} \tilde{\Phi}(1+s) X^{s}  \Gamma(u)(\cos + \operatorname{sgn} (k) \sin ) \left( \frac{\pi u}{2}\right) \left(\frac{\l a^2}{\pi |k|} \right)^u \\
& \times {8^{s+u}} \frac{1}{n^{s}}g_\alpha(s+u)  \frac{G(s+u)}{s+u} ds du.
 \end{align*}
Together with \eqref{qu:M_1plus-k!=0-1}, it follows that 
\begin{equation}
\begin{split}
&M_N^+ (\a,\l, k\neq 0) \\
&= \frac{X}{2\l } \sum_{\substack{(a,2\l) =1\\ a \leq Y}} \frac{\mu(a)}{a^2}  \sum_{k \neq 0} (-1)^k  \frac{1}{(2\pi i)^2} \int_{(\frac{1}{2}+ \varepsilon)} \int_{(\varepsilon)} \tilde{\Phi}(1+s) X^{s}  \Gamma(u)\\
&\times (\cos + \operatorname{sgn} (k) \sin ) \left( \frac{\pi u}{2}\right) \left(\frac{\l a^2}{\pi |k|} \right)^u\\
& \times\ {8^{s+u}} g_\alpha(s+u) \frac{G(s+u)}{s+u} Z_2(\tfrac{1}{2} + \a + s, a,k,\l)ds du,
\end{split}
\label{equ:M_1plus-k!=0-2}
\end{equation}
where 
\begin{align*}
Z_2(\gamma, a,k,\l) := \sum_{(n,2a)=1} \frac{\L_f(n)}{n^{\gamma}} \frac{G_k(\l n )}{ n}.
\end{align*}
\begin{lem}
Write $4k =k_1 k_2^2$,  where $k_1$ is a fundamental discriminant (possibly $k_1 =1$) and $k_2$ is positive. Then
for $\Re(\gamma) >\frac{1}{2}$, we have
\begin{align}
Z_2(\gamma, a,k,\l) = L(\tfrac{1}{2} + \gamma, f\otimes \chi_{k_1}) Z_3 (\gamma, a,k,\l).
\label{equ:Z-3}
\end{align}
Here 
\[
Z_3 (\gamma, a,k,\l):= \prod_{p} Z_{3,p} (\gamma, a,k,\l),
\] 
where 
\begin{align*}
&Z_{3,p} (\gamma, a,k,\l) :=  1 - \frac{\L_f (p) \chi_{k_1} (p)}{p^{\frac{1}{2} + \gamma}} + \frac{\chi_{k_1} (p)^2}{p^{1+ 2\gamma}}\quad \text{if } p|2a, \quad \text{and} \\
&Z_{3,p} (\gamma, a,k,\l) \\
&:= \left( 1 - \frac{\L_f (p) \chi_{k_1} (p)}{p^{\frac{1}{2} + \gamma}} + \frac{\chi_{k_1} (p)^2}{p^{1+ 2\gamma}}\right) \sum_{r=0}^{\infty} \frac{\L_f (p^r) }{p^{r\gamma}} \frac{G_k (p^{r+ \operatorname{ord}_p(\l)})}{p^r}\quad \text{if } p\nmid 2a.
\end{align*}

Moreover, $Z_3 (\gamma, a,k,\l) $ is analytic in the region $\Re(\gamma) > 0$ and is uniformly bounded  by 
$a^\varepsilon |k| ^ \varepsilon \l^{\frac{1}{2} + \varepsilon} (\l, k_2^2 )^{\frac{1}{2}}$ in the region $\Re(\gamma) > \frac{\varepsilon}{2}$.
\label{lem:Z-3}
\end{lem}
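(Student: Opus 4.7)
The plan is to first establish the factorization $Z_2 = L(\tfrac{1}{2} + \gamma, f \otimes \chi_{k_1}) Z_3$ by a prime-by-prime Euler product computation, and then to deduce the analyticity and growth statements from Lemma \ref{lem:preciseG}.

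First I would expand $Z_2(\gamma, a, k, \l)$ as an Euler product. Since $(a, 2\l) = 1$ and $(n, 2a) = 1$ force $n$ to be odd and coprime to $a$, the integer $\l n$ is odd, so by Lemma \ref{lem:preciseG} the Gauss-type sum $G_k$ is multiplicative on coprime factors of $\l n$; combined with multiplicativity of $\lambda_f$, and writing $u_p = \operatorname{ord}_p(\l)$, this gives
\begin{equation*}
Z_2(\gamma, a, k, \l) = \prod_{p \nmid 2a} \sum_{r=0}^\infty \frac{\lambda_f(p^r)}{p^{r(1+\gamma)}} \, G_k(p^{r + u_p}).
\end{equation*}
Dividing the Euler product of $L(\tfrac{1}{2} + \gamma, f \otimes \chi_{k_1})$ out of this expression then produces exactly the stated $Z_{3,p}$: at primes $p \mid 2a$ the factor $Z_2$ is trivial there, so $Z_{3,p}$ reduces to the bare Euler factor $1 - \lambda_f(p)\chi_{k_1}(p) p^{-1/2-\gamma} + \chi_{k_1}(p)^2 p^{-1-2\gamma}$; at primes $p \nmid 2a$, the same quantity multiplies the inner sum to yield the second case of the definition. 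This establishes \eqref{equ:Z-3} as an equality of Euler products, valid absolutely for $\Re(\gamma) > \tfrac{1}{2}$.

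For analyticity in $\Re(\gamma) > 0$, Lemma \ref{lem:preciseG} shows that $G_k(p^\beta) = 0$ once $\beta \geq \operatorname{ord}_p(k) + 2$, so each local sum is a polynomial in $p^{-\gamma}$ and hence entire. To check absolute convergence of the infinite product, I would compute $Z_{3,p}$ at generic primes $p \nmid 2a\l k$: here $G_k(1) = 1$, $G_k(p) = \chi_{k_1}(p)\sqrt{p}$, $G_k(p^{\geq 2}) = 0$, and a short multiplication gives
\begin{equation*}
Z_{3,p} = 1 + \chi_{k_1}(p)^2 (1 - \lambda_f(p)^2) p^{-1 - 2\gamma} + \lambda_f(p) \chi_{k_1}(p)^3 p^{-3/2 - 3\gamma} = 1 + O(p^{-1 - 2\Re(\gamma)}),
\end{equation*}
where the error uses the Ramanujan bound $|\lambda_f(p)| \leq 2$. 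Absolute and locally uniform convergence of $\prod_p Z_{3,p}$ then follows from $\sum_p p^{-1-2\Re(\gamma)} < \infty$ for $\Re(\gamma) > 0$, providing meromorphic continuation of $Z_3$ to that half-plane.

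The most delicate step, which I expect to be the main obstacle, is the uniform bound in $\Re(\gamma) \geq \varepsilon/2$. The generic primes contribute $O(1)$ by the previous paragraph; primes $p \mid 2a$ contribute $O(1)$ each, totaling $2^{\omega(a)} \ll a^\varepsilon$; primes $p \mid k$ with $p \nmid \l$ contribute $O(|k|^\varepsilon)$ in total via the explicit values of $G_k(p^\beta)$. The essential work is at primes $p \mid \l$: writing $p^{u_p} \| \l$ and $p^{v_p} \| k$, Lemma \ref{lem:preciseG} restricts the sum in $Z_{3,p}$ to $0 \leq r \leq v_p + 1 - u_p$, and the largest individual summand is $\ll p^{u_p}$ (when $r + u_p \leq v_p$ is even) or $\ll p^{u_p - 1/2}$ (when $r + u_p = v_p + 1$ is odd). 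A case analysis in the parities of $u_p$ and $v_p$ yields $|Z_{3,p}| \ll p^{1/2 + \varepsilon}$ at $p \mid \l_1$ (matching the $\sqrt{\l_1}$ budget), while for $p \mid \l_2$ the sum is nonzero only when $p \mid k$, and the resulting $|Z_{3,p}| \ll p^{u_p + \varepsilon}$ is precisely captured by the factor $(\l, k_2^2)^{1/2}$ (which supplies an extra $p^{u_p/2}$ beyond the $\sqrt{\l}$ already accounted for). Assembling these prime-by-prime contributions yields the stated bound $a^\varepsilon |k|^\varepsilon \l^{1/2 + \varepsilon} (\l, k_2^2)^{1/2}$.
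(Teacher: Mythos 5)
Your proposal follows essentially the same route as the paper: expand $Z_2$ as an Euler product using the multiplicativity of $G_k$, divide out the Euler factors of $L(\tfrac12+\gamma,f\otimes\chi_{k_1})$, check that the generic local factor is $1+O(p^{-1-2\Re(\gamma)})$, and bound the bad local factors via Lemma \ref{lem:preciseG} together with the truncation $G_k(p^{r+u_p})=0$ for $r+u_p\geq v_p+2$. The one imprecision is your intermediate claim that $|Z_{3,p}|\ll p^{1/2+\varepsilon}$ whenever $p\mid\l_1$: this fails when $p^2\mid k$ (for $u_p=1$, $v_p\geq 2$ the summand $r=1$ already has size about $p^{1-\Re(\gamma)}$), and the missing half power at such primes is precisely what the factor $(\l,k_2^2)^{1/2}$ must absorb — it is not only needed for $p\mid\l_2$. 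This is exactly how the paper's uniform local bound $\ll(1+b_1+b_2)^2\,p^{\min(b_2,\,\lfloor b_1/2\rfloor+b_2/2)}$ handles all parities at once, so your argument closes once that case is folded in.
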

\begin{proof}
The proof is similar to \cite[Lemma 5.3]{Sound00}.  Note that  $G_k(n)$ is multiplicative. Hence
\[
Z_2(\gamma, a,k,\l) = \prod_{(p,2a)=1} \sum_{r=0} \frac{\L_f (p^r)}{p^{r\gamma}} \frac{G_k(p^{r+ \operatorname{ord}_p(\l) })}{p^r}.
\]
Then the identity \eqref{equ:Z-3}  follows directly from a comparison of  both sides.

When $p \nmid 2ak\l$, by the definition of $Z_{3,p} (\gamma, a,k,\l)$ and Lemma \ref{lem:preciseG}, we know
\begin{equation}
\begin{split}
Z_{3,p} (\gamma, a,k,\l) & = \left( 1 - \frac{\L_f (p) \chi_{k_1} (p)}{p^{\frac{1}{2} + \gamma}} + \frac{\chi_{k_1} (p)^2}{p^{1+ 2\gamma}}\right) \left( 1 + \frac{\L_f(p) \chi_{k_1}(p)}{p^{\frac{1}{2} + \gamma}}\right) \\
&=1 + \frac{\chi_{k_1}(p)^2}{p^{1+ 2\gamma}} - \frac{\L_f(p)^2 \chi_{k_1}(p)^2}{p^{1+2\gamma}} + \frac{\L_f(p)\chi_{k_1}(p)^3}{p^{\frac{3}{2} + 3\gamma}}.
\end{split}
\label{equ:Z-3-1}
\end{equation}
Hence $Z_3 (\gamma, a,k,\l) $ is analytic in the region $\Re(\gamma)>0$.

It remains to prove the upper bound of $Z_3 (\gamma, a,k,\l)$. For 
$p \nmid 2a k\l$, by \eqref{equ:Z-3-1} and the fact $|\L_f(n)| \leq \tau(n)$,  we get
\begin{align}
\prod_{(p,2ak\l)=1}Z_{3,p} (\gamma, a,k,\l) \ll  1.
\label{equ:Z4-1}
\end{align}
For $p|2a$,  we have
\begin{align}
\prod_{p|2a} Z_{3,p}(\gamma, a,k,\l)  \ll   a^{\varepsilon}.
\label{equ:Z4-2}
\end{align}
For $p \nmid 2a, p|k\l$, we let $p^{b_1} || k, p^{b_2} ||\l$. 
We  can assume $b_2 \leq b_1+1$ since  $G_k(p^{r+b_2}) = 0$ otherwise (by Lemma \ref{lem:preciseG}).  We claim  $Z_{3,p} (\gamma, a,k,\l) \ll (1+ b_1 + b_2)^2 p^{\min(b_2, \floor{\frac{b_1}{2}} + \frac{b_2}{2})}$. In fact,  the trivial bound $G_k(p^n) \leq  p^n$ gives $Z_{3,p} (\gamma, a,\allowbreak k,\l)  \ll  (1+ b_1 + b_2)^2 p^{b_2}$, which proves the case $b_2 \leq \floor{\frac{b_1}{2}} + \frac{b_2}{2}$. The remaining cases include: $b_1$ even and $b_2 =b_1 + 1$, or $b_1$ odd and $b_2=b_1$, or $b_1$ odd  and $b_2 = b_1 + 1$. For $b_1$ even and $b_2 =b_1 + 1$, by Lemma \ref{lem:preciseG}, we know $Z_{3,p} (\gamma, a,k,\l) \ll  p^{b_1 } \sqrt{p}= p^{\floor{\frac{b_1}{2}} + \frac{b_2}{2}}$. The other two cases can be done similarly.
This combined with \eqref{equ:Z4-1} and \eqref{equ:Z4-2} gives the upper bound for $Z_3 (\gamma, a,k,\l)$.
\end{proof}
By   \eqref{equ:M_1plus-k!=0-2}   and Lemma \ref{lem:Z-3}, we have
\begin{align*}
&M_N^+ (\a,\l, k\neq 0) \nonumber\\
&= \frac{X}{2\l } \sum_{\substack{(a,2\l) =1\\ a \leq Y}} \frac{\mu(a)}{a^2}  \sum_{k \neq 0} (-1)^k  \frac{1}{(2\pi i)^2} \int_{(\frac{1}{2}+ \varepsilon)} \int_{(\varepsilon)}\tilde{\Phi}(1+s) X^{s}  \Gamma(u)\\
& \times (\cos + \operatorname{sgn} (k) \sin ) \left( \frac{\pi u}{2}\right) \left(\frac{\l a^2}{\pi |k|} \right)^u \nonumber\\
& \times  {8^{s+u}}  g_\alpha(s+u) \frac{ G(s+u)}{s+u} L(1 + \alpha +s , f\otimes \chi_{k_1}) Z_3 (\tfrac{1}{2} + \alpha +s, a,k,\l)ds du.
\end{align*}
Move the lines of the integral to $\Re(s) = -\frac{1}{2} - \alpha+ \varepsilon$, $\Re(u) = 1+ \varepsilon$ without encountering any poles. Together with Lemma \ref{HB-bd} and Lemma \ref{lem:Z-3}, it follows that
\begin{lem} We have
\begin{align*}
M_N^{+} (\a,\l, k\neq 0) \ll \l^{\frac{1}{2}+\varepsilon }    X^{\frac{1}{2}+ \varepsilon}Y.
\end{align*}
\label{lem:non-diagnol}
\end{lem}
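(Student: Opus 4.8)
The approach follows the now-standard off-diagonal pattern: the expansion \eqref{equ:M_1plus-k!=0-2} together with Lemma \ref{lem:Z-3} already writes $M_N^+(\a,\l,k\neq 0)$ as a sum over $a\le Y$ and $k\neq 0$ of a double contour integral whose integrand contains $L(1+\a+s,f\otimes\chi_{k_1})\,Z_3(\tfrac12+\a+s,a,k,\l)$ (with $4k=k_1k_2^2$, $k_1$ a fundamental discriminant), the Gamma-ratio $g_\a(s+u)$, the trigonometric factor $\Gamma(u)(\cos+\operatorname{sgn}(k)\sin)(\tfrac{\pi u}{2})$, the Mellin weight $\tilde{\Phi}(1+s)$, and $G(s+u)/(s+u)$. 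The plan is to move the two contours to $\Re(u)=1+\varepsilon$ and $\Re(s)=-\tfrac12-\Re(\a)+\varepsilon$, bound the integrand on these lines, and then carry out the sums over $a$ and $k$. On the new $s$-line the twist $L$-value is evaluated on $\Re=\tfrac12$, so Lemma \ref{HB-bd} applies; on the new $u$-line the factor $(\l a^2/\pi|k|)^u$ produces the convergence-producing weight $|k|^{-1-\varepsilon}$.

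First I would justify the shifts. Raising $\Re(u)$ from $\tfrac12+\varepsilon$ to $1+\varepsilon$ crosses no pole, since $\Gamma(u)$ is holomorphic for $\Re(u)>0$, the only singularity of $G(s+u)/(s+u)$ is at $s+u=0$ while $\Re(s+u)$ stays positive, and the numerator of $g_\a(s+u)$ is holomorphic as $\Re(\tfrac\kappa2+\a+s+u)>0$. Lowering $\Re(s)$ from $\varepsilon$ to $-\tfrac12-\Re(\a)+\varepsilon$ likewise crosses no pole: $s+u=0$ would require $\Re(s)=-1-\varepsilon$, strictly to the left of the target line; $L(1+\a+s,f\otimes\chi_{k_1})$ is entire in $s$; and $Z_3(\tfrac12+\a+s,a,k,\l)$ is holomorphic for $\Re(\tfrac12+\a+s)>0$ by Lemma \ref{lem:Z-3}, which holds on and to the right of the target line because $\varepsilon>0$. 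Throughout, $|\Re(\a)|\ll 1/\log X$ is used only to keep $\Re(s+u)$ and $\Re(\tfrac\kappa2+\a+s+u)$ positive and to make $1+\a+s$ have real part $\tfrac12+\varepsilon$.

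On the shifted lines I bound everything in absolute value. The factor $X^s$ contributes $\ll X^{-1/2+\varepsilon}$ (using $X^{-\Re(\a)}=O(1)$); $(\l a^2/\pi|k|)^u$ contributes $\ll\l^{1+\varepsilon}a^{2+\varepsilon}|k|^{-1-\varepsilon}$; Lemma \ref{lem:Z-3} gives $|Z_3|\ll(a|k|)^{\varepsilon}\l^{1/2+\varepsilon}(\l,k_2^2)^{1/2}$ on the line $\Re(\tfrac12+\a+s)=\varepsilon$; and $\tilde{\Phi}(1+s)\,G(s+u)/(s+u)$ decays faster than any polynomial in $|\Im(s)|$ and in $|\Im(s+u)|$. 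The factor $\Gamma(u)(\cos+\operatorname{sgn}(k)\sin)(\tfrac{\pi u}{2})$ is only polynomially large in $|\Im(u)|$ on $\Re(u)=1+\varepsilon$ (by Stirling and the reflection formula the exponential growth of $\cos,\sin$ is cancelled by the decay of $\Gamma$), and $g_\a(s+u)$ is at most polynomially large in $|\Im(s+u)|$, so integrating first in $\Im(u)$ for fixed $\Im(s)$ and then in $\Im(s)$ shows the double integral converges absolutely, leaving only a polynomial factor in $|\Im(s)|$ that $\tilde{\Phi}$ absorbs. One also picks up a factor $(1+|t|)^{1/2+\varepsilon}$ with $t=\Im(\a+s)$ from Lemma \ref{HB-bd} in the next step; since $|t|\ll(\log X)^2+|\Im(s)|$ this is $\ll X^{\varepsilon}(1+|\Im(s)|)^{1/2+\varepsilon}$ and is absorbed too. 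It then remains to sum: $\sum_{a\le Y}a^{2+\varepsilon}a^{\varepsilon}/a^{2}\ll Y^{1+\varepsilon}\ll YX^{\varepsilon}$; writing $|k|\asymp|k_1|k_2^2$ the $k$-sum factors, the $k_2$-sum $\sum_{k_2}(\l,k_2^2)^{1/2}k_2^{-2-\varepsilon}\ll\l^{\varepsilon}$ by a routine Euler-product estimate (each $p^b\| \l$ contributing a local factor $1+O(p^{-1-\varepsilon})$), and the $k_1$-sum is handled dyadically, Lemma \ref{HB-bd} giving $\sum_{K\le|k_1|<2K}|L(\tfrac12+\varepsilon+it,f\otimes\chi_{k_1})|\ll K^{1+\varepsilon}(1+|t|)^{1/2+\varepsilon}$ on each block, hence $\sum_{k_1}|k_1|^{-1-\varepsilon}|L(\tfrac12+\varepsilon+it,f\otimes\chi_{k_1})|\ll(1+|t|)^{1/2+\varepsilon}$. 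Collecting the prefactor $X/(2\l)$ with the powers of $\l$ ($\l^{-1}\l^{1+\varepsilon}\l^{1/2+\varepsilon}\l^{\varepsilon}=\l^{1/2+\varepsilon}$) and of $X$ ($X\cdot X^{-1/2+\varepsilon}=X^{1/2+\varepsilon}$) and the factor $YX^{\varepsilon}$ yields the claimed bound after relabelling $\varepsilon$.

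The main obstacle is concentrated in the middle two steps: one must check carefully that the contour shifts cross no poles—in particular that both $Z_3$ and the quadratic-twist $L$-function remain holomorphic down to the line $\Re(\tfrac12+\a+s)=\varepsilon$—and one must pair the merely polynomial growth of $\Gamma(u)(\cos+\operatorname{sgn}(k)\sin)(\tfrac{\pi u}{2})$ correctly against the two distinct sources of rapid decay, namely $G$ in the variable $\Im(s+u)$ and $\tilde{\Phi}$ in the variable $\Im(s)$, which forces one to integrate in $u$ before $s$. Once those are settled, the summation over $a$ and $k$ is routine, the decisive inputs being the uniform bound for $Z_3$ in Lemma \ref{lem:Z-3} and the first-moment bound in Lemma \ref{HB-bd}.
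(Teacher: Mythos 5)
Your proposal is correct and follows essentially the same route as the paper: shift the two contours to $\Re(u)=1+\varepsilon$ and $\Re(s)=-\tfrac12-\a+\varepsilon$ in \eqref{equ:M_1plus-k!=0-2}, then bound the integrand using the uniform estimate for $Z_3$ from Lemma \ref{lem:Z-3} and the first-moment bound of Lemma \ref{HB-bd}, and sum over $a\le Y$ and $k\neq 0$. The paper states this in one line, while you supply the supporting details (no poles crossed, decay of $\Gamma(u)(\cos+\operatorname{sgn}(k)\sin)(\pi u/2)$ against $G$ and $\tilde\Phi$, the $k_1,k_2$ factorization of the $k$-sum), all of which are consistent with the intended argument.
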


\section{Evaluation of $M_R^{+} (\a,\l)$\label{sec:Error}}
In this section we shall simplify  $M_{R,1}^{+} (\a,\l)$, and derive upper bounds for $M_{R,2}^{+} (\a,\l), M_{R,3}^{+} (\a,\l)$ by proving the follow lemma.
\begin{lem}\label{lem:M2} 
We have
\begin{align}
M_{R,1}^+ (\a,\l)
&=\frac{4X }{\pi^2\l_1^{\frac{1}{2} + \alpha}}  \sum_{\substack{a>Y \\ (a,2\l)=1}} \frac{\mu (a)}{a^2}  \prod_{p|a} \frac{1}{1-\frac{1}{p^2}} \frac{1}{2 \pi i}  \int_{(1)}     
\tilde{\Phi}(1+s)
\mathcal{A}(\a + s, a,\l) \nonumber \\
& \times 
 \frac{1}{\l_1^s}X^s 8^sg_\a (s)\frac{G(s)}{s} ds,  \label{equ:M21-1}\\
M_{R,2}^{+} (\a,\l) &\ll     \l^{\varepsilon} X^{\frac{1}{2} + \varepsilon} Y, \label{equ:M21-2}     \\
M_{R,3}^{+} (\a,\l) &\ll\l^{\frac{1}{2} + \varepsilon} \frac{X^{h+\varepsilon}}{Y^{2h-1}}.  \label{equ:M21-3}
\end{align}
\end{lem}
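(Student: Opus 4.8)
The plan is to establish the three assertions in turn, two of which are routine estimates and one of which (the bound for $M_{R,2}^+$) carries the weight.

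\medskip
\noindent\emph{The formula \eqref{equ:M21-1}.} Starting from \eqref{equ:M21}, the idea is to carry out the arithmetic sums over $c$, $r_1$ and $r_2$ explicitly and to recognise that what remains is exactly the $a>Y$ analogue of Lemma \ref{lem:M1}; this is what makes $M_N^+(\a,\l,k=0)+M_{R,1}^+$ a single clean sum over all $a$. First I would interchange the order of summation, writing $\sum_{(c,2\l)=1}\sum_{a\mid c,\,a>Y}\mu(a)=\sum_{(a,2\l)=1,\,a>Y}\mu(a)\sum_{(c,2\l)=1,\,a\mid c}$, and fix $a$. For fixed $a$ the remaining sum over $c$ (with $a\mid c$), together with the inner sums over the squarefree coprime pair $r_1,r_2\mid c$ weighted by $\mu(r_1)\L_f(r_1)r_1^{-1-2\a-2s}$ and $\mu(r_1r_2)^2 r_2^{-1-2\a-2s}$, and the factor $L(1+2\a+2s,\operatorname{sym}^2 f)Z(\tfrac12+\a+s,\l r_1r_2^2)$, is jointly multiplicative over the primes $p\nmid 2\l$ (the primes dividing $\l$ entering only through the $E_1,E_2$ factors of $Z$ in \eqref{equ:conj1}). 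I would then evaluate the local factor at each prime: the geometric-type series over $\operatorname{ord}_p(c)$ combines with the contribution of $\operatorname{ord}_p(r_1),\operatorname{ord}_p(r_2)\in\{0,1\}$ to reassemble precisely the Euler factor of $\mathcal A(\a+s,a,\l)$ from \eqref{def:A}, with the leftover $1-p^{-2}$ producing $\prod_{p\mid a}(1-p^{-2})^{-1}$ together with the $\zeta_{2a}(2)^{-1}$ exactly as in Lemma \ref{lem:Z}. Assembling over all primes gives \eqref{equ:M21-1}; the contour may then be moved from $\Re(s)=\tfrac1{\log X}$ to $\Re(s)=1$ without crossing poles, since $L(s,\operatorname{sym}^2 f)$ is entire, $Z(\tfrac12+\a+s,\l)$ is analytic for $\Re(\a+s)>-\tfrac14$ by Lemma \ref{lem:lastlem}, and $G(s)/s$ is holomorphic for $\Re(s)>0$.

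\medskip
\noindent\emph{The bound \eqref{equ:M21-3}.} This is a trivial estimate of \eqref{equ:M23}. One uses $|\sum_{a>Y,\,a\mid c}\mu(a)|\le\tau(c)$; the sums $\sum_{r_1\mid c}\frac{|\L_f(r_1)|}{r_1^{1/2+\Re\a}}r_1^{1/2+\varepsilon/100}$ and $\sum_{r_2\mid c}\frac{1}{r_2^{1+2\Re\a}}r_2^{1+\varepsilon/50}$ are $\ll_\varepsilon c^{\varepsilon/10}\tau(c)^{3}$; and the $s$-integral over $|t|\ll(\log X)^2$ on $\Re(s)=\tfrac1{\log X}$ is $\ll X^\varepsilon$, because $|X^{s}8^{s}|\ll1$ there, $g_\a(s)$ grows at most polynomially by Stirling, and $G(s)$ decays faster than any polynomial. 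Since $X'^{\,h+\varepsilon}=(X/c^2)^{h+\varepsilon}$, collecting these bounds and summing over $c>Y$ via $\sum_{c>Y}\tau(c)^{O(1)}c^{-2h-\varepsilon}\ll_\varepsilon Y^{-(2h-1)+\varepsilon}$ for $h>\tfrac12$ (and, when $h=\tfrac12$, using the extra factor $c^{-2\varepsilon}$ coming from $X'^{\,h+\varepsilon}$ to make the $c$-sum converge) yields $M_{R,3}^+\ll\l^{1/2+\varepsilon}X^{h+\varepsilon}Y^{-(2h-1)}$.

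\medskip
\noindent\emph{The bound \eqref{equ:M21-2}.} This is the delicate step. Starting from \eqref{equ:M22}, I would move the $s$-contour (keeping inside the region where $Z(\tfrac12-\a-s,\cdot)$, or rather a high truncation $Z^N$ of it from Lemma \ref{lem:lastlem}, is controlled) and estimate trivially: $L(1-2\a-2s,\operatorname{sym}^2 f)$ and $\gamma_{\a+s}$ by convexity and Stirling, $Z$ by $(\l r_1r_2^2)^\varepsilon$, the decay of $G$ absorbing all polynomial growth, and the $r_1,r_2$-sums by divisor bounds. The point is that the factor $\mu(a)$ with $a\mid c$, $a>Y$, forces $c>Y$ and, crucially, must be used with cancellation: writing $\sum_{a>Y,\,a\mid c}\mu(a)=-\sum_{a\le Y,\,a\mid c}\mu(a)$ for $c>1$ and then exchanging the order of summation, the sum over $c$ can be evaluated (or compared against a convergent Euler product) rather than estimated term by term. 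Balancing the resulting $X$- and $c$-dependence—and recalling that the only range of $Y$ relevant to the recursion is $Y\le X^{1/4}$, in view of the choice $Y=X^{(2h-1)/(4h)}$ to be made in Section \ref{sec:comp}—gives $M_{R,2}^+\ll\l^{\varepsilon}X^{1/2+\varepsilon}Y$. This is the same shape of estimate obtained for the off-diagonal term $M_N^+(\a,\l,k\neq0)$ in Lemma \ref{lem:non-diagnol}, and indeed an alternative route is to apply the mean-value bound \cite[Corollary 2.5]{Sound-Young} (as in Lemma \ref{HB-bd}) directly to $M_R^+$ before the inductive hypothesis is invoked.

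\medskip
\noindent\emph{Main obstacle.} The two parts needing genuine care are the prime-by-prime bookkeeping in \eqref{equ:M21-1}—one must verify that the $c,r_1,r_2$ sums really do reassemble into $\mathcal A(\a+s,a,\l)$, so that $M_{R,1}^+$ is precisely the $a>Y$ tail of Lemma \ref{lem:M1}—and, above all, extracting the sharp $Y$-dependence in \eqref{equ:M21-2}, i.e.\ using the Möbius cancellation in the $a$-sum efficiently enough to obtain $X^{1/2+\varepsilon}Y$ rather than a weaker power of $Y$.
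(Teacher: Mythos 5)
Your plan follows the paper's proof essentially step for step: \eqref{equ:M21-1} is obtained by interchanging the sums and collapsing the $c,r_1,r_2$ sums multiplicatively into $\frac{1}{a^2}\prod_{p\mid a}(1-p^{-2})^{-1}\mathcal{A}(\a+s,a,\l)$ (this is the paper's Lemma \ref{lem:complic}), \eqref{equ:M21-3} is the same trivial estimate using $a>Y$, $a\mid c\Rightarrow c>Y$, and for \eqref{equ:M21-2} the paper uses exactly your M\"obius completion $\sum_{a>Y,\,a\mid c}\mu(a)=\sum_{a\mid c}\mu(a)-\sum_{a\le Y,\,a\mid c}\mu(a)$, evaluates the completed $c$-sum as $\zeta_{2\l}(2-4\a-4s)$, and shifts the contour to $\Re(s)=\tfrac12-\varepsilon$, where the pole of this zeta is killed by the factor $(1-4\a-4s)$ built into $G$ (Remark \ref{rem:G}), $Z(\tfrac12-\a-s,\cdot)$ is controlled via Lemma \ref{lem:lastlem}, and the factor $Y$ arises because $|a^{-(2-4\a-4s)}|\ll 1$ on that line, so the sum over $a\le Y$ contributes $\ll Y$. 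The one point to flag is your parenthetical ``alternative route'' via the mean-value bound of Lemma \ref{HB-bd}: applied directly to $M_R^+$ it only yields roughly $X^{1+\varepsilon}/Y$, which is weaker than $X^{1/2+\varepsilon}Y$ in the relevant range $Y\le X^{1/4}$ and would not sustain the recursion, so the completion-plus-contour-shift argument you sketch first is the one that must actually be carried out.
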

We give a proof for the above lemma in the rest of the section. Recall $M_{R,1}^{+} (\a,\l)$ in \eqref{equ:M21}.  By interchanging summations and integrals, we know
\begin{equation}
\begin{split}
&M_{R,1}^+ (\a,\l)\\
&=\frac{4X }{\pi^2\l_1^{\frac{1}{2} + \alpha}}  \sum_{\substack{a>Y \\ (a,2\l)=1}} \mu (a)\frac{1}{2 \pi i}  \int_{(1)}   \tilde{\Phi}(1+s)     \sum_{(r_1,2\l)=1 }  \frac{\mu(r_1)  \L_f (r_1) }{r_1^{1+ 2\alpha+2s }}   
\sum_{(r_2,2\l)=1 }   \frac{\mu( r_1 r_2)^2}{r_2^{1+ 2\a +2s}} \\
& \times  
L(1+2\alpha+2s, \operatorname{sym}^2 f)  Z (\tfrac{1}{2} + \alpha+s,\l r_1 r_2^2)
  \sum_{\substack{(c,2\l)=1\\a,r_1,r_2|c }}  \frac{1}{c^2}
 \frac{1}{\l_1^s}X^s 8^sg_\a (s)\frac{G(s)}{s} ds. 
 \end{split}
 \label{equ:MR1}
\end{equation}
\begin{lem}\label{lem:complic}
For $\Re(\gamma)>0$,
\begin{equation}
\begin{split}
&\sum_{(r_1,2\l)=1 }  \frac{\mu(r_1)  \L_f (r_1) }{r_1^{1+ 2\gamma }}   
\sum_{(r_2,2\l)=1 }   \frac{\mu( r_1 r_2)^2}{r_2^{1+ 2\gamma}}\\
 &\times  \sum_{\substack{(c,2\l)=1\\a,r_1,r_2|c }}  \frac{1}{c^2}   L(1+2\gamma, \operatorname{sym}^2 f)  Z (\tfrac{1}{2} + \gamma,\l r_1r_2^2)  \\
  &=
 \frac{1}{a^2} \prod_{p|a} \frac{1}{1-\frac{1}{p^2}} \mathcal{A}(\gamma, a, \l).
 \end{split}
  \label{equ:complic}
\end{equation}
\end{lem}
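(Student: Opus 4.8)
The plan is to establish \eqref{equ:complic} by turning it into an Euler product identity and then isolating a single local cancellation at each prime. Since $\Re(\gamma)>0$, the sums over $r_1$, $r_2$ and $c$ on the left converge absolutely (using $|\L_f(n)|\le\tau(n)$ and the absolute convergence of $Z$), so all the rearrangements below are legitimate. First I would expand $L(1+2\gamma,\operatorname{sym}^2 f)\,Z(\tfrac12+\gamma,\l r_1 r_2^2)=\prod_{(p,2)=1}Z_p(\tfrac12+\gamma,\l r_1r_2^2)$ via \eqref{equ:LZ} and split the product according to whether $p\mid\l$ or $p\nmid 2\l$. For $p\mid\l$ one has $\operatorname{ord}_p(\l r_1r_2^2)=\operatorname{ord}_p(\l)$ because $(r_1r_2,\l)=1$, so $Z_p(\tfrac12+\gamma,\l r_1r_2^2)$ is $E_1(\gamma;p)$ if $p\mid\l_1$ and $E_2(\gamma;p)$ if $p\nmid\l_1$, $p\mid\l_2$, independently of $r_1,r_2$. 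This finite product equals $\prod_{p\mid\l_1}E_1(\gamma;p)\prod_{p\nmid\l_1,\,p\mid\l_2}E_2(\gamma;p)$, which is exactly the ``$\l$-part'' of $\mathcal{A}(\gamma,a,\l)$ in \eqref{def:A}; I pull it out front, and it remains to match the part of the product coming from primes $p\nmid 2\l$.

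The second step is a prime-by-prime factorization of the remaining triple sum. Fix $p\nmid 2\l$ and set $m=\operatorname{ord}_p(a)\in\{0,1\}$ (as $a$ is square-free), $i=\operatorname{ord}_p(r_1)$, $j=\operatorname{ord}_p(r_2)$; the weight $\mu(r_1r_2)^2$ forces $i,j\in\{0,1\}$ with $ij=0$. Then $\operatorname{ord}_p(\l r_1r_2^2)=i+2j$, so $Z_p(\tfrac12+\gamma,\l r_1r_2^2)$ equals $E_1(\gamma;p)$ if $i=1$, $E_2(\gamma;p)$ if $(i,j)=(0,1)$, and $E_3(\gamma;p)$ if $i=j=0$; the condition $a,r_1,r_2\mid c$ contributes $\sum_{k\ge\max(i,j,m)}p^{-2k}=p^{-2\max(i,j,m)}/(1-p^{-2})$ from $c$. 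Summing over the three admissible pairs $(i,j)$ and using $\mu(p)\L_f(p)=-\L_f(p)$, the local factor at $p$ is $\tfrac{1}{p^2-1}\bigl(E_3(\gamma;p)-\tfrac{\L_f(p)}{p^{1+2\gamma}}E_1(\gamma;p)+\tfrac{1}{p^{1+2\gamma}}E_2(\gamma;p)\bigr)$ when $p\mid a$, and $\tfrac{1}{p^2-1}\bigl(p^2E_3(\gamma;p)-\tfrac{\L_f(p)}{p^{1+2\gamma}}E_1(\gamma;p)+\tfrac{1}{p^{1+2\gamma}}E_2(\gamma;p)\bigr)$ when $p\nmid a$.

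The heart of the proof is the algebraic identity
\[
E_3(\gamma;p)-\frac{\L_f(p)}{p^{1+2\gamma}}\,E_1(\gamma;p)+\frac{1}{p^{1+2\gamma}}\,E_2(\gamma;p)=1 ,
\]
which I would verify directly from \eqref{equ:conj1}: writing $u=\L_f(p)p^{-\frac12-\gamma}$, $v=p^{-1-2\gamma}$ and $A_\pm=1\pm u+v$, one has $\tfrac12(A_-^{-1}+A_+^{-1})=(1+v)/((1+v)^2-u^2)$ and $\tfrac12(A_-^{-1}-A_+^{-1})=u/((1+v)^2-u^2)$, whence $E_3(\gamma;p)=\tfrac1{p+1}+E_2(\gamma;p)$ and the left side collapses to $\tfrac1{p+1}+\tfrac{p}{p+1}=1$. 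Granting this, the local factor becomes $\tfrac1{p^2-1}$ for $p\mid a$ and $\tfrac{(p^2-1)E_3(\gamma;p)+1}{p^2-1}=E_3(\gamma;p)+\tfrac1{p^2-1}$ for $p\nmid 2a\l$. Multiplying over all primes, $\prod_{p\mid a}\tfrac1{p^2-1}=\tfrac1{a^2}\prod_{p\mid a}\tfrac1{1-1/p^2}$ (again using that $a$ is square-free) and $\prod_{(p,2a\l)=1}\bigl(E_3(\gamma;p)+\tfrac1{p^2-1}\bigr)$, together with the ``$\l$-part'' extracted in the first step, reconstitute exactly $\tfrac1{a^2}\prod_{p\mid a}\tfrac1{1-1/p^2}\,\mathcal{A}(\gamma,a,\l)$ by \eqref{def:A}, which is \eqref{equ:complic}.

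The main obstacle is precisely the displayed identity: it is a non-obvious cancellation, forced by the exact shape of $E_1,E_2,E_3$ in \eqref{equ:conj1}, and it is really what makes the stated closed form come out; everything else is routine bookkeeping of the divisibility and square-free conditions and of the geometric series in $\operatorname{ord}_p(c)$.
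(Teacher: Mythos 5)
Your proof is correct and follows essentially the same route as the paper: both reduce \eqref{equ:complic} to an Euler-product computation whose heart is the local cancellation $E_3(\gamma;p)-\frac{\L_f(p)}{p^{1+2\gamma}}E_1(\gamma;p)+\frac{1}{p^{1+2\gamma}}E_2(\gamma;p)=1$, which you display and verify while the paper uses it silently in its final simplification. The only organizational difference is that the paper first evaluates the $c$-sum in closed form via $[r_1,r_2,a]$ and $\zeta_{2\l}(2)$ and then Euler-factors the remaining double sum $H(r_1,r_2)$, whereas you factor the whole triple sum prime by prime (and you take $a$ squarefree, which is the only case ever used since $a$ always carries a factor $\mu(a)$; the same local computation with $\operatorname{ord}_p(a)\geq 1$ arbitrary gives the general statement).
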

\begin{proof}
 The left-hand side of \eqref{equ:complic} is 
\begin{equation}
\begin{split}
&\sum_{(r_1,2\l)=1 }  \frac{\mu(r_1)  \L_f (r_1) }{r_1^{1+ 2\gamma }}   
\sum_{(r_2,2\l)=1 }   \frac{\mu( r_1 r_2)^2}{r_2^{1+ 2\gamma}}
  \sum_{\substack{(c,2\l)=1\\a,r_1,r_2|c }}  \frac{1}{c^2}
  \prod_{p| \l_1r_1} E_1(\gamma;p)  \\
  &\times \prod_{\substack{p \nmid \l_1r_1\\p|\l_2 r_2}} E_2(\gamma;p) \prod_{(p,2\l r_1r_2)=1} E_3(\gamma; p).
  \end{split}
  \label{equ:complic1}
\end{equation}
Note 
\begin{align}
 \sum_{\substack{(c,2\l)=1\\a,r_1,r_2|c }}  \frac{1}{c^2} = \sum_{\substack{[r_1, r_2, a] |c \\ (c,2 \l)=1}} \frac{1}{c^2}
 = \frac{1}{[r_1, r_2, a]^2} \sum_{(c,2\l)=1}\frac{1}{c^2} =\frac{1}{a^2} \frac{([r_1,r_2],a)^2}{[r_1,r_2]^2} \zeta_{2\l}(2).
   \label{equ:complic2}
\end{align}
We also see that
\begin{align}
\prod_{\substack{p \nmid \l_1r_1\\p|\l_2 r_2}} E_2(\gamma;p)  = \prod_{p|\l_2r_2} E_2(\gamma;p)  \prod_{p|(r_1,r_2)} E_2(\gamma;p) ^{-1} \prod_{p|(\l_1,\l_2)} E_2(\gamma;p)^{-1}.
  \label{equ:complic3}
\end{align}
Inserting \eqref{equ:complic2} and \eqref{equ:complic3} into \eqref{equ:complic1},   the expression  \eqref{equ:complic1}  now is
\begin{align}
\frac{\zeta_{2\l}(2)}{a^2}\prod_{p| \l_1} E_1(\gamma;p)\prod_{\substack{p|\l_2 \\ p\nmid \l_1}} E_2(\gamma;p) \prod_{(p,2\l)=1} E_3(\gamma; p) \sum_{(r_1,2\l)=1 }\sum_{(r_2,2\l)=1 }  H(r_1,r_2),
\label{equ:complic4}
\end{align}
where 
\begin{align*}
&H(r_1,r_2)\\
& :=  \frac{\mu(r_1)  \L_f (r_1) }{r_1^{1+ 2\gamma }}   
  \frac{\mu( r_1 r_2)^2}{r_2^{1+ 2\gamma}}
 \frac{([r_1,r_2],a)^2}{[r_1,r_2]^2}  
 \prod_{p| r_1} E_1(\gamma;p)   \prod_{p|r_2} E_2(\gamma;p) \\
 &\times  \prod_{p|(r_1,r_2)} E_2(\gamma;p) ^{-1}
  \prod_{p|r_1r_2} E_3(\gamma; p)^{-1}.
\end{align*}
Clearly $H(r_1,r_2)$ is joint multiplicative. Then
\begin{align*}
&\sum_{(r_1,2\l)=1 }\sum_{(r_2,2\l)=1 }  H(r_1,r_2) \\
&= \prod_{(p,2\l)=1} \left( 1 -  \frac{\L_f(p)}{p^{1+2\gamma}} \frac{(p,a)^2}{p^2}E_1(\gamma;p) E_3(\gamma;p) ^{-1} \right.\\
&+ \left. \frac{1}{p^{1+2\gamma}} \frac{(p,a)^2}{p^2} E_2(\gamma;p)E_3(\gamma;p) ^{-1} \right).
\end{align*}
It follows that
\begin{equation}
\begin{split}
&\prod_{(p,2\l)=1} E_3(\gamma;p)  \cdot \sum_{(r_1,2\l)=1 }\sum_{(r_2,2\l)=1 }  H(r_1,r_2)  \\
&= \prod_{(p,2a\l)=1} \left( E_3(\gamma;p)-  \frac{\L_f(p)}{p^{3+2\gamma}} E_1(\gamma;p)+ \frac{1}{p^{3+2\gamma}} E_2(\gamma;p) \right)\prod_{p|a}1 \\
&= \prod_{(p,2a\l)=1}\Bigg\{1 + \frac{p}{p+1} \frac{1}{2} \left( 1- \frac{\L_f (p)}{p^{\frac{1}{2} + \gamma} } + \frac{1}{p^{1+2\gamma}} \right) ^{-1} 
\left( 1 - \frac{\L_f(p)}{p^{\frac{1}{2}+ \gamma}} \frac{1}{p^2} + \frac{1}{p^{1+ 2\gamma}} \frac{1}{p^2}\right)
 \\
&+  \frac{p}{p+1} \frac{1}{2} \left( 1+ \frac{\L_f (p)}{p^{\frac{1}{2} + \gamma} } + \frac{1}{p^{1+2\gamma}} \right) ^{-1} 
\left( 1 + \frac{\L_f(p)}{p^{\frac{1}{2}+ \gamma}} \frac{1}{p^2} + \frac{1}{p^{1+ 2\gamma}} \frac{1}{p^2}\right) - \frac{p}{p+1} \Bigg\}.
\end{split}
\label{equ:complic5--}
\end{equation}
The last identity above is obtained by grouping terms involving $ \frac{p}{p+1} \frac{1}{2} ( 1\mp \frac{\L_f (p)}{p^{\frac{1}{2} + \gamma} } + \frac{1}{p^{1+2\gamma}} ) ^{-1} $.
Note that 
\begin{align*}
1 - \frac{\L_f(p)}{p^{\frac{1}{2}+ \gamma}} \frac{1}{p^2} + \frac{1}{p^{1+ 2\gamma}} \frac{1}{p^2} &= 1-\frac{1}{p^2} + \frac{1}{p^2}\left(1- \frac{\L_f (p)}{p^{\frac{1}{2} + \gamma} } + \frac{1}{p^{1+2\gamma}}  \right),\\
1 + \frac{\L_f(p)}{p^{\frac{1}{2}+ \gamma}} \frac{1}{p^2} + \frac{1}{p^{1+ 2\gamma}} \frac{1}{p^2} &= 1-\frac{1}{p^2} + \frac{1}{p^2}\left(1+ \frac{\L_f (p)}{p^{\frac{1}{2} + \gamma} } + \frac{1}{p^{1+2\gamma}}  \right).
\end{align*}
Thus, \eqref{equ:complic5--} can be simplified to
\begin{equation}
\begin{split}
&\prod_{(p,2a\l)=1} \left\{ 1 + \frac{p}{p+1} \frac{1}{2} \left( 1- \frac{\L_f (p)}{p^{\frac{1}{2} + \gamma} } + \frac{1}{p^{1+2\gamma}} \right) ^{-1} 
\left( 1 - \frac{1}{p^2}\right) + \frac{p}{p+1} \frac{1}{2} \frac{1}{p^2}\right.
 \\
&\left. +  \frac{p}{p+1} \frac{1}{2} \left( 1+ \frac{\L_f (p)}{p^{\frac{1}{2} + \gamma} } + \frac{1}{p^{1+2\gamma}} \right) ^{-1} 
\left( 1 - \frac{1}{p^2}\right) + \frac{p}{p+1} \frac{1}{2} \frac{1}{p^2}
 - \frac{p}{p+1} \right\}
 \\
&= \prod_{(p,2a\l)=1} \left( 1 - \frac{1}{p^2}\right) 
\left(
E_3 (\gamma; p)
 +  \frac{1}{p^2 -1}
 \right).  
 \end{split}
 \label{equ:complic5}
\end{equation}
Substituting \eqref{equ:complic5} in  \eqref{equ:complic4} completes the proof of Lemma \ref{lem:complic}.
 \end{proof}
We then can complete the proof for  \eqref{equ:M21-1} by using  \eqref{equ:MR1} and  Lemma \ref{lem:complic}.

Next recall $M_{R,2}^+ (\a,\l)$ in \eqref{equ:M22}, which   is of the form 
\[
M_{R,2}^+ (\a,\l) = \frac{ i^\kappa}{\l_1^{\frac{1}{2} - \alpha }} \sum_{(c,2\l)=1} \sum_{\substack{a>Y \\ a|c}} \mu (a)   T(c,\a,\l).
\]
We extend the sum over $a>Y$  to that over all positive integers. Then
\begin{align}
M_{R,2}^+ (\a,\l) = \frac{ i^\kappa}{\l_1^{\frac{1}{2} - \alpha }} \sum_{(c,2\l)=1} \sum_{a|c} \mu (a)   T(c,\a,\l)-  \frac{i^\kappa}{\l_1^{\frac{1}{2} - \alpha }} \sum_{(c,2\l)=1} \sum_{\substack{a\leq Y \\ a|c}} \mu (a)   T(c,\a,\l).
\label{equ:M22-1}
\end{align}
We know  $\sum_{a|c} \mu (a) = 1$ when $c=1$, and is zero  otherwise. Thus
\begin{align*}
&\frac{ i^\kappa}{\l_1^{\frac{1}{2} - \alpha }} \sum_{(c,2\l)=1} \sum_{a|c} \mu (a)   T(c,\a,\l)  \\
&= \frac{i^\kappa}{\l_1^{\frac{1}{2} - \alpha }}  
 \frac{1}{2 \pi i}  \int_{(\frac{1}{\log X})}       \frac{4X ^{1-2\a-s}\gamma_{\a +s}\tilde{\Phi}(1-2\a-s)}{\pi^2} L(1-2\alpha-2s, \operatorname{sym}^2 f)  \nonumber \\
& \times Z (\tfrac{1}{2} - \alpha-s,\l )
\l_1^s  8^sg_\a (s)\frac{G(s)}{s} ds. 
\end{align*}
Move the line of the above  integral  from $\Re(s) = \frac{1}{\log X}$ to $\Re(s) =  \frac{1}{2} - \varepsilon$. We encounter no poles due to Lemma \ref{lem:lastlem} and Remark \ref{rem:G}. It follows that 
\begin{align}
\frac{ i^\kappa}{\l_1^{\frac{1}{2} - \alpha }} \sum_{(c,2\l)=1} \sum_{a|c} \mu (a)   T(c,\a,\l)  \ll \l^\varepsilon    X^{\frac{1}{2} + \varepsilon}.
\label{equ:M22-2}
\end{align}
For the second term of  \eqref{equ:M22-1},  we move the contour of the integral in $T(c,\a,\l)$ to $\Re(s) = \frac{1}{10}$ without encountering any poles.  We have 
\begin{align*}
&\frac{ i^\kappa}{\l_1^{\frac{1}{2} - \alpha }} \sum_{(c,2\l)=1} \sum_{\substack{a\leq Y \\ a|c}} \mu (a)   T(c,\a,\l)\\
&=\frac{i^\kappa}{\l_1^{\frac{1}{2} - \alpha }} \sum_{\substack{a \leq Y \\(a,2\l)=1}}  \mu (a)  \sum_{(r_1,2\l)=1} \frac{\mu(r_1) \L_f (r_1)}{r_1} \sum_{(r_2,2\l)=1} \frac{\mu(r_1 r_2)^2}{r_2^{1+2\a }}\\
&\times \frac{1}{2\pi i}\int_{(\frac{1}{10})}    \frac{4X ^{1-2\a-s}\gamma_{\a +s}\tilde{\Phi}(1-2\a-s)}{\pi^2} \sum_{\substack{(c,2\l)=1 \\ a,r_1,r_2|c}} \frac{1}{c^{2-4\a -4s}} \\
& \times  L(1-2\alpha-2s, \operatorname{sym}^2 f)   Z (\tfrac{1}{2} - \alpha-s,\l r_1 r_2^2 ) \frac{\l_1^s8^s}{r_2^{2s}} g_\a (s)\frac{G(s)}{s}  ds.
\end{align*}
Treat $\sum_{\substack{(c,2\l)=1 \\ a,r_1,r_2|c}} \frac{1}{c^{2-4\a -4s}}$ as in \eqref{equ:complic2}. The above is 
\begin{align*}
&\frac{ i^\kappa}{\l_1^{\frac{1}{2} - \alpha }} \sum_{\substack{a \leq Y \\ (a,2\l)=1}}  \mu (a)  \sum_{(r_1,2\l)=1} \frac{\mu(r_1) \L_f (r_1)}{r_1} \sum_{(r_2,2\l)=1} \frac{\mu(r_1 r_2)^2}{r_2^{1+2\a }} \\
&\times \frac{1}{2\pi i}\int_{(\frac{1}{10})}    \frac{4X ^{1-2\a-s}\gamma_{\a +s}\tilde{\Phi}(1-2\a-s)}{\pi^2}  \\
& \times \frac{1}{a^{2-4\a -4s}} \frac{([r_1,r_2],a)^{2-4\a -4s}}{[r_1,r_2]^{2-4\a -4s}}   \zeta_{2\l}( 2-4\a -4s)  L(1-2\alpha-2s, \operatorname{sym}^2 f)  \\
& \times  Z (\tfrac{1}{2} - \alpha-s,\l r_1 r_2^2 ) \frac{\l_1^s}{r_2^{2s}} 8^sg_\a (s)\frac{G(s)}{s}  ds.
\end{align*}
Move the contour of the integral above to $\Re(s)= \frac{1}{2} - \varepsilon$ without encountering any poles by Lemma \ref{lem:lastlem} and Remark \ref{rem:G}. In particular, the pole of $\zeta(2-4\a -4s)$ is canceled by the factor $1-4\a -4s$ in $G(s)$. By the fact 
\[
\left| \frac{([r_1,r_2],a)^{2-4\a -4s}}{a^{2-4\a -4s}} \frac{1}{[r_1,r_2]^{2-4\a -4s}} \right| \leq \frac{1}{r_1^\varepsilon},
\]
we obtain 
\begin{align}
\frac{ i^\kappa}{\l_1^{\frac{1}{2} - \alpha }} \sum_{(c,2\l)=1} \sum_{\substack{a\leq Y \\ a|c}} \mu (a)   T(c,\a,\l) \ll \l^{\varepsilon} X^{\frac{1}{2} + \varepsilon} Y.
\label{equ:M22-3}
\end{align}
Combining \eqref{equ:M22-1}, \eqref{equ:M22-2} and \eqref{equ:M22-3} gives \eqref{equ:M21-2}.

Finally, recall $M_{R,3}^+ (\a,\l)$ in \eqref{equ:M23}.  Note $h \geq \frac{1}{2}$. Then
\begin{align*}
M_{R,3}^+ (\a,\l) 
\ll \l^{\frac{1}{2} +\frac{ \varepsilon}{10}}\sum_{(c,2\l)=1} \left( \frac{X}{c^2} \right)^{h+\varepsilon} \sum_{\substack{a>Y \\ a|c}}   \sum_{r_1 |c}  
\sum_{r_2 |c}  (r_1 r_2^2 )^{\frac{\varepsilon}{10}}  \ll  \l^{\frac{1}{2} + \varepsilon} \frac{X^{h+\varepsilon}}{Y^{2h-1}},
\end{align*}
which gives \eqref{equ:M21-3}.

\section{Proof of  Theorem \ref{thm-assump}   \label{sec:comp} }
By Lemmas \ref{lem:M1} and \ref{lem:M2},
\begin{equation}
\begin{split}
&M_{R,1}^+ (\a,\l)  + M_N^+ (\a,\l, k=0) \\
&= \frac{4X}{\pi^2 \l_1^{\frac{1}{2} +\a} } \sum_{(a,2\l) =1} \frac{\mu(a)}{a^2} \prod_{p|a} \frac{1}{1-\frac{1}{p^2}}  \frac{1}{2\pi i} \int_{(1)} \tilde{\Phi}(s+1)  
 \mathcal{A}( s + \a, a,\l) \frac{1}{\l_1^s} \\
 &\times 8^s  X^s  g_\a (s)  \frac{G(s)}{s} ds,
 \end{split}
  \label{equ:final1}
\end{equation}
where $\mathcal{A}( s + \a, a,\l)$ is defined in \eqref{def:A}. It can be deduced that 
\begin{align*}
&\sum_{(a,2\l) =1} \frac{\mu(a)}{a^2} \prod_{p|a}  \frac{1}{1-\frac{1}{p^2}}\mathcal{A}( s + \a, a,\l) \\
&= \prod_{p |\l_1}   E_1 (s + \a ; p)   
\prod_{\substack{p \nmid \l_1 \\ p |\l_2}} E_2 (s + \a; p) 
\sum_{(a,2\l) =1} \frac{\mu(a)}{a^2} \prod_{p|a}  \frac{1}{1-\frac{1}{p^2}}\\
&\times \prod_{(p,2a \l)=1}\left(  E_3 (s + \a; p) + \frac{1}{p^2 -1} \right)\\
&= \prod_{p |\l_1}   E_1 (s + \a ; p)   
\prod_{\substack{p \nmid \l_1 \\ p |\l_2}} E_2 (s + \a; p) 
\prod_{(p,2 \l)=1}\left(  E_3 (s + \a; p) + \frac{1}{p^2 -1} \right) \\
& \times \sum_{(a,2\l) =1} \frac{\mu(a)}{a^2} \prod_{p|a}  \frac{(E_3 (s + \a; p) + \frac{1}{p^2 -1})^{-1}}{(1-\frac{1}{p^2})}  
\\
&= \prod_{p|\l_1}E_1  (\a +s;p )\prod_{\substack{p \nmid \l_1 \\ p|\l_2}}E_2(\a +s;p ) \prod_{(p,2\l)=1} E_3 (\a +s;p ) \\
&=L(1+2\alpha + 2s, \operatorname{sym}^2 f)  Z(\tfrac{1}{2} + \alpha+s, \l).
\end{align*}
The second last equation is due to the multiplicity of the function $\frac{\mu(a)}{a^2} \prod_{p|a}$.
This combined with \eqref{equ:final1} gives 
\begin{equation}
\begin{split}
&M_{R,1}^+ (\a,\l)  + M_N^+ (\a,\l, k=0) \\
&= \frac{4X}{\pi^2 \l_1^{\frac{1}{2} +\a} } \frac{1}{2\pi i} \int_{(1)} \tilde{\Phi}(s+1)  
L(1+2\alpha + 2s, \operatorname{sym}^2 f)  Z (\tfrac{1}{2} + \alpha+s, \l)\frac{1}{\l_1^s} \\
&\times 8^s  X^s  g_\a (s)  \frac{G(s)}{s} ds.
\end{split}
\label{equ:above-final2}
\end{equation}
Move the integration to the line $\Re(s) = -\frac{1}{2} + \varepsilon$ with encountering one simple pole at $s=0$ by Lemma \ref{lem:lastlem} and Remark \ref{rem:G}. This gives
\begin{equation}
\begin{split}
&M_{R,1}^+ (\a,\l)  + M_N^+ (\a,\l, k=0) \\
&= \frac{4X}{\pi^2 \l_1^{\frac{1}{2} +\a} } \tilde{\Phi}(1)  L(1+2\alpha , \operatorname{sym}^2 f)  Z (\tfrac{1}{2} + \alpha, \l) + O(X^{\frac{1}{2} + \varepsilon} \l^\varepsilon).
\end{split}
\label{equ:final2}
\end{equation}
By Remark \ref{rem:Phi}, we know
\begin{equation}
\begin{split}
&M_{R,1}^- (\a,\l)  + M_N^- (\a,\l, k=0) \\
&=  i^\kappa\frac{4\gamma_\a X^{1-2\a}}{\pi^2 \l_1^{\frac{1}{2} -\a} } \tilde{\Phi}(1-2\a)  L(1-2\alpha , \operatorname{sym}^2 f)  Z (\tfrac{1}{2} -\alpha, \l) + O(X^{\frac{1}{2} + \varepsilon} \l^\varepsilon). 
\end{split}
\label{equ:final3}
\end{equation}
Similarly we can derive  same upper bounds for $M_N^{-} (\a,\l, k\neq 0)$, $M_{R,2}^{-}  (\a,\l)$ and $M_{R,3}^{-} (\a,\l)$ as  those for $M_N^{+} (\a,\l, k\neq 0)$, $M_{R,2}^{+}  (\a,\l)$ and $M_{R,3}^{+ } (\a,\l)$ in Lemmas \ref{lem:non-diagnol} and \ref{lem:M2}. Therefore it follows from \eqref{relation1}, \eqref{relation2},  \eqref{equ:final2}, \eqref{equ:final3}, and Lemmas \ref{lem:non-diagnol} and \ref{lem:M2} that 
\begin{align*}
M(\a,\l) 
&=\frac{4X}{\pi^2 \l_1^{\frac{1}{2} +\a} } \tilde{\Phi}(1)  L(1+2\alpha , \operatorname{sym}^2 f)  Z (\tfrac{1}{2} + \alpha, \l)  \\
& + i^\kappa \frac{4\gamma_\a X^{1-2\a}}{\pi^2 \l_1^{\frac{1}{2} -\a} } \tilde{\Phi}(1 - 2\a)  L(1-2\alpha , \operatorname{sym}^2 f)  Z (\tfrac{1}{2} -\alpha, \l)  \\
& + O (\l^{\frac{1}{2}+\varepsilon}X^{\frac{1}{2} + \varepsilon} Y) + O \left (\l^{\frac{1}{2} + \varepsilon} \frac{X^{h+\varepsilon}}{Y^{2h-1}} \right).
\end{align*}
Taking $Y= X^{\frac{2h-1}{4h}}$ completes the proof of Theorem \ref{thm-assump}.

\subsection*{Acknowledgements}
This paper is part of my Ph.D. thesis at the University of Lethbridge. I would like to thank  my supervisors Habiba Kadiri and Nathan Ng for their constant encouragement  and many  discussions on this article. I am  grateful to Amir Akbary, Andrew Fiori and Caroline Turnage-Butterbaugh for helpful suggestions, and to the referee for their valuable comments. This work was supported by the NSERC Discovery grants RGPIN-2020-06731 of Habiba Kadiri and RGPIN-2020-06032 of Nathan Ng.


\normalsize


\begin{thebibliography}{[HD82]}




\normalsize
\baselineskip=17pt



\bibitem{Alderson-Rubinstein}
M.~W. Alderson and M.~O. Rubinstein.
\newblock Conjectures and experiments concerning the moments of
  {$L(1/2,\chi_d)$}.
\newblock {\em Exp. Math.}, 21(3):307--328, 2012.

\bibitem{BFKR}
H.~M. Bui, A.~Florea, J.~P. Keating, and E.~Roditty-Gershon.
\newblock Moments of quadratic twists of elliptic curve {$L$}-functions over
  function fields.
\newblock {\em Algebra Number Theory}, 14(7):1853--1893, 2020.

\bibitem{Bump-F-H-01}
D.~Bump, S.~Friedberg, and J.~Hoffstein.
\newblock A nonvanishing theorem for derivatives of automorphic {$L$}-functions
  with applications to elliptic curves.
\newblock {\em Bull. Amer. Math. Soc. (N.S.)}, 21(1):89--93, 1989.

\bibitem{Bump-F-H}
D.~Bump, S.~Friedberg, and J.~Hoffstein.
\newblock Nonvanishing theorems for {$L$}-functions of modular forms and their
  derivatives.
\newblock {\em Invent. Math.}, 102(3):543--618, 1990.

\bibitem{Conrey-Farmer-Keating-Rubinstein-Snaith}
J.~B. Conrey, D.~W. Farmer, J.~P. Keating, M.~O. Rubinstein, and N.~C. Snaith.
\newblock Integral moments of {$L$}-functions.
\newblock {\em Proc. London Math. Soc. (3)}, 91(1):33--104, 2005.

\bibitem{Goldfeld-Hoffstein}
D.~Goldfeld and J.~Hoffstein.
\newblock Eisenstein series of {${1\over 2}$}-integral weight and the mean
  value of real {D}irichlet {$L$}-series.
\newblock {\em Invent. Math.}, 80(2):185--208, 1985.

\bibitem{HB01}
D.~R. Heath-Brown.
\newblock A mean value estimate for real character sums.
\newblock {\em Acta Arith.}, 72(3):235--275, 1995.

\bibitem{Iwaniec}
H.~Iwaniec.
\newblock On the order of vanishing of modular {$L$}-functions at the critical
  point.
\newblock {\em S\'{e}m. Th\'{e}or. Nombres Bordeaux (2)}, 2(2):365--376, 1990.

\bibitem{Keating-Snaith02}
J.~P. Keating and N.~C. Snaith.
\newblock Random matrix theory and {$L$}-functions at {$s=1/2$}.
\newblock {\em Comm. Math. Phys.}, 214(1):91--110, 2000.

\bibitem{Kolyvagin}
V.~A. Kolyvagin.
\newblock Finiteness of {$E({\bf Q})$} and {III}{{$(E,{\bf Q})$}} for a
  subclass of {W}eil curves.
\newblock {\em Izv. Akad. Nauk SSSR Ser. Mat.}, 52(3):522--540, 670--671, 1988.

\bibitem{Kowalski-Michel}
E.~Kowalski and P.~Michel.
\newblock The analytic rank of {$J_0(q)$} and zeros of automorphic
  {$L$}-functions.
\newblock {\em Duke Math. J.}, 100(3):503--542, 1999.

\bibitem{Luo-Ramakrishnan}
W.~Luo and D.~Ramakrishnan.
\newblock Determination of modular forms by twists of critical {$L$}-values.
\newblock {\em Invent. Math.}, 130(2):371--398, 1997.

\bibitem{Murty-Murty}
M.~R. Murty and V.~K. Murty.
\newblock Mean values of derivatives of modular {$L$}-series.
\newblock {\em Ann. of Math. (2)}, 133(3):447--475, 1991.

\bibitem{Petrow}
I.~Petrow.
\newblock Moments of {$L'(\frac12)$} in the family of quadratic twists.
\newblock {\em Int. Math. Res. Not. IMRN}, (6):1576--1612, 2014.

\bibitem{Sound-Radziwill}
M.~{Radziwi\strokel\strokel} and K.~{Soundararajan}.
\newblock Moments and distribution of central {$L$}-values of quadratic twists
  of elliptic curves.
\newblock {\em Invent. Math.}, 202(3):1029--1068, 2015.

\bibitem{Sono}
K.~Sono.
\newblock The second moment of quadratic {D}irichlet {$L$}-functions.
\newblock {\em J. Number Theory}, 206:194--230, 2020.

\bibitem{Sound00}
K.~Soundararajan.
\newblock Nonvanishing of quadratic {D}irichlet {$L$}-functions at
  {$s=\frac12$}.
\newblock {\em Ann. of Math. (2)}, 152(2):447--488, 2000.

\bibitem{Sound-Young}
K.~Soundararajan and M.~P. Young.
\newblock The second moment of quadratic twists of modular {$L$}-functions.
\newblock {\em J. Eur. Math. Soc. (JEMS)}, 12(5):1097--1116, 2010.

\bibitem{Stefanicki}
T.~Stefanicki.
\newblock {\em Non-vanishing of {L}-functions attached to automorphic
  representations of {GL}(2)}.
\newblock 1992.
\newblock Thesis (Ph.D.)--McGill University (Canada).

\bibitem{Young01}
M.~P. Young.
\newblock The first moment of quadratic {D}irichlet {$L$}-functions.
\newblock {\em Acta Arith.}, 138(1):73--99, 2009.

\bibitem{Young}
M.~P. Young.
\newblock The third moment of quadratic {D}irichlet {L}-functions.
\newblock {\em Selecta Math. (N.S.)}, 19(2):509--543, 2013.

\end{thebibliography}
\end{document}